\newcommand{\new}{\newcommand*}\new{\rnew}{\renewcommand*}
\new{\newe}{\newenvironment*}\new{\stl}{\setlength}
\stl{\textwidth}{155mm}\stl{\textheight}{22cm}\stl{\headheight}{0cm}
\stl{\topmargin}{0cm}\stl{\oddsidemargin}{0.5cm}\stl{\evensidemargin}{0cm}
\rnew{\arraystretch}{1.2}\rnew{\baselinestretch}{1.2}
\renewcommand{\thefootnote}{\ding{73}}
\newtheorem{thm}{Theorem}[section]
\newtheorem{lem}{Lemma}[section]
\newtheorem{prop}{Proposition}
\newtheorem{defn}{Definition}[section]
\newtheorem{rem}{Remark}[section]
\newtheorem{prob}{Problem}[section]
\newcommand{\eps}{\varepsilon}
\newcommand{\fr}{\frac}
\newcommand{\pa}{\partial}
\numberwithin{equation}{section}
\new{\sect}[1]{\section{#1}\setcounter{equation}{0}
 \setcounter{thm}{0}\setcounter{lmm}{0}\setcounter{rmk}{0} }
\begin{document}

\title{ Sonic-supersonic solutions for the two-dimensional steady full Euler equations  }

\author{
Yanbo Hu$^{a,*}$, Jiequan Li$^{b,c}$
\\{\small \it $^a$Department of Mathematics, Hangzhou Normal University,
Hangzhou, 311121, China}
\\
{\small \it $^b$Laboratory of Computational Physics, Institute of Applied Physics}
\\
{\small \it and Computational Mathematics, Beijing, 100088, China}
\\
{\small \it $^c$Center for Applied Physics and Technology, Peking University, 100871, China}}

\rnew{\thefootnote}{\fnsymbol{footnote}}

\footnotetext{ $^*$Corresponding author. }
\footnotetext{ Email address: yanbo.hu@hotmail.com (Y. Hu), li\_jiequan@iapcm.ac.cn (J. Li). }

\date{May 2017}

\maketitle
\begin{abstract}
This paper focuses on the structure of classical  sonic-supersonic solutions near sonic curves for the two-dimensional full Euler equations in gas dynamics.
In order to deal with the parabolic degeneracy  near the sonic curve,  a novel set of dependent and independent variables are introduced  to transform the Euler equations into a new system of governing equations which displays a clear regularity-singularity structure. With the help of technical characteristic decompositions, the existence of a local smooth solution  for the new system is first  established in a weighted metric space by using the iteration method and then expressed in terms of the original physical variables.   This is the first time to construct
a classical sonic-supersonic solution near a sonic curve for the full Euler equations.
\end{abstract}

\begin{keywords}
Two-dimensional full Euler equations, sonic-supersonic solution, sonic curve, partial hodograph transformation, weighted metric space.
\end{keywords}

\begin{AMS}
35L65, 35L80, 76H05.
\end{AMS}

\section{Introduction}\label{S1}

In the famous book  (Supersonic Flow and Shock Waves, 1948), Courant and Friedrichs described the following transonic phenomena in a duct: {\em Suppose the flow through an infinite duct whose walls are plane except for a small inward bulge at some section,
if the entrance proper Mach number is not much below the value one, then the flow becomes supersonic in a finite region adjacent to the bulge.}  See Figure 1 for illustration. A similar situation ubiquitously occurs in the context of gas dynamics, such as  a flow over an airfoil.
\vspace{0.2cm}

\begin{figure}[htbp]
\begin{center}
\includegraphics[scale=0.65]{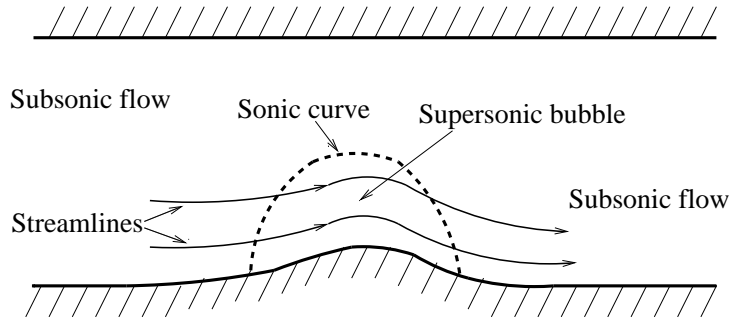}
\caption{\footnotesize Transonic phenomena in a duct.}
\end{center}
\end{figure}

The existence of solution for such a  transonic flow problem remains open  mathematically for a long time and many contributions have been made, as reviewed below. The two-dimensional steady isentropic irrotational compressible Euler equations are usually adopted to study this problem for mathematical simplicity. Under the irrotational assumption, the isentropic steady Euler equations are transformed into a two-by-two quasilinear reducible equations or a single second order nonlinear (potential) equation. In \cite{Morawetz1}, Morawetz showed the nonexistence of smooth solutions for the problem in general. The existence of weak solutions were investigated by Morawetz \cite{Morawetz2} and Chen et al. \cite{Chen1, Chen2, Chen3} in  the  compensated-compactness framework. Xie and Xin  adopted a potential-stream function formulation to verify the existence of solutions in a subsonic-sonic part of the nozzle \cite{Xie-Xin1, Xie-Xin2}. The study of transonic shocks arising in supersonic flow past a blunt body or a bounded nozzle was presented   in \cite{ChenS2, Elling-Liu, Xin-Yin1, Xin-Yin2}.
For more related references, refer to,  for example,  works on classical methods for solutions \cite{Bers, Courant1, Ferrari, Guderley}, on perturbation arguments and linear theory \cite{BG, ChenS1, Oleinik, Smirnov} and on asymptotic models \cite{Canic, Cole-Cook, Hunter}. One may also consult the books \cite{Courant1, Courant2, Kuzmin} for some explicit examples of transonic solutions, e.g., the Ringleb flow. However, it is quite difficult to apply them for practical applications.
\vspace{0.2cm}

A number of results for studying the transonic flow problem are based on the hodograph method which allows to  linearize the two-by-two equations by switching the roles of the velocity with the spatial independent variables. However, it is well-known that this method is difficult in taking on boundary conditions and in transforming back to the original independent variables due to the sonic degeneracy. Instead, some  attempts have been made  to work directly in the spatial independent variables or other coordinates. For example, Kuz'min \cite{Kuzmin} used the coordinate system $(\Psi, \Phi)$ to establish an existence theorem for the problem of the perturbation of a given known transonic solutions, where $\Psi$ is the stream function and $\Phi$ is the potential function. Recently, Zhang and Zheng \cite{ZZ1}  constructed a local smooth supersonic solution on one side of a given sonic curve for the two-dimensional steady isentropic irrotational Euler system and shed more insights into clear structures of the solution near the sonic curve. Zhang and Zheng's result is based on the coordinate system $(\sqrt{q^2-c^2},\Phi)$, where $q$ is the flow speed and $c$ is the sound speed. We also refer the reader to \cite{Hu-Li-Sheng, Hu-Wang, Lai-Sheng, Li, Li-Zheng1, Li-Zheng2, Lim, SWZ, Song-Zheng, Wang-Zheng} and references therein for the relevant results about the degenerate Goursat problems arising from the study of two-dimensional Riemann problem to the Euler equations and its related models (such as pressure-gradient equations).
\vspace{0.2cm}

This paper aims to construct a classical sonic-supersonic solution to the two-dimensional steady full Euler equations. Since there may contain transonic shocks in the transonic flow \cite{Morawetz1} and the entropy in the flow is not constant and the flow behind the shock is not irrotational \cite{Majda}, thus it is more suitable to adopt the steady full Euler equations as governing equations to study the transonic flow problem,
\begin{align}\label{1.1}
\left\{
\begin{array}{l}
  (\rho u)_x+(\rho v)_y=0, \\
  (\rho u^2+p)_x+(\rho uv)_y=0, \\
  (\rho uv)_x+(\rho v^2+p)_y=0, \\
  (\rho Eu+pu)_x+(\rho Ev+pv)_y=0,
\end{array}
\right.
\end{align}
where $\rho$, $(u, v)$, $p$ and $E$ are, respectively, the density, the velocity, the pressure and the specific total energy. For  polytropic gases,  $E=\fr{u^2+v^2}{2}+\frac{1}{\gamma-1}\frac{p}{\rho}$,  where $\gamma>1$ is the adiabatic gas constant. This system has three eigenvalues
\begin{align}\label{1.2}
\Lambda_0=\frac{v}{u},\ \ \Lambda_{\pm}=\frac{uv\pm c\sqrt{u^2+v^2-c^2}}{u^2-c^2},
\end{align}
where $c=\sqrt{\gamma p/\rho}$ is the speed of sound.  Therefore, it is of mixed-type: supersonic for $q>c$, subsonic for $q<c$ and sonic for $q=c$. The notation $q=\sqrt{u^2+v^2}$ is used for the flow speed.
The set of points at which $c=q$ is called the {\em sonic curve.} We consider the following problem.
\begin{prob}\label{p1}
Given a piece of smooth curve $\Gamma: y=\varphi(x)$,
$x\in [x_1,x_2]$, we assign the boundary data for $(\rho, u, v, p)$ on $\Gamma$, $(\rho, u, v, p)(x,\varphi(x)) =(\hat{\rho}, \hat{u}, \hat{v}, \hat{p})(x)$ such that $\hat{\rho}(x)>0$, $\hat{p}(x)>0$ and $\gamma\hat{p}(x)=\hat{\rho}(x)(\hat{u}^2(x)+\hat{v}^2(x))$ for any $x\in[x_1,x_2]$. This means that $\Gamma$ is a sonic curve.
We want to find a classical solution for system \eqref{1.1} in the region $q>c$ near $\Gamma$.
\end{prob}
\vspace{0.2cm}

The main result we obtain is stated in the following theorem.
\begin{thm}\label{thm2}
Let $\hat{\theta}$ be the flow angle on $\Gamma$ defined by $\hat{\theta}=\arctan(\hat{v}/\hat{u})(x)$. Assume that the curve $\Gamma$ and the boundary data $(\hat{\rho}, \hat{u}, \hat{v}, \hat{p})$ satisfy
\begin{align}\label{1.3}
\begin{array}{c}
\varphi(x)\in C^4([x_1,x_2]),\quad (\hat{\rho}, \hat{u}, \hat{v}, \hat{p})(x)\in C^4([x_1,x_2]),  \\
\hat{p}'\leq0,\quad \hat{\theta}'<0,\quad \cos\hat{\theta}\varphi'-\sin\hat{\theta}>0,\quad \cos\hat{\theta}+\sin\hat{\theta}\varphi'>0, \quad \forall\ x\in[x_1,x_2].
\end{array}
\end{align}
Then there exists a classical solution for Problem \ref{p1} in the region $q>c$ near $\Gamma$.
\end{thm}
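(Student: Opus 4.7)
The plan is to exploit the layered structure of \eqref{1.1}: along streamlines the specific entropy $S=p\rho^{-\gamma}$ and the Bernoulli function $B=\tfrac{1}{2}q^2+\tfrac{\gamma}{\gamma-1}\tfrac{p}{\rho}$ are transported, so once the acoustic pair $(\theta,q)$ with $\theta=\arctan(v/u)$ is known the remaining variables $(\rho,p)$ are reconstructed from $S,B$ along streamlines. Thus I would first rewrite \eqref{1.1} as two transport equations for $S,B$ along $\Lambda_0$ coupled to a pair of equations for $(\theta,q)$ along the acoustic characteristics $\Lambda_\pm$. At the sonic curve $\Gamma$ the two families coalesce and the coefficient $\sqrt{q^2-c^2}$ vanishes, producing the parabolic degeneracy that must be resolved.

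Next, following the spirit of Zhang--Zheng \cite{ZZ1} and adapting it to the non-isentropic, rotational setting, I would perform a partial hodograph transformation: introduce $(t,\xi)$ as new independent variables with $\xi$ parametrising $\Gamma$ and $t$ scaling like $\sqrt{q^2-c^2}$, so that $t=0$ corresponds to $\Gamma$. Taking $(x,y,\theta,S,B)$ (or suitably scaled versions) as dependent variables, I would derive the resulting quasilinear system and verify that the $(q^2-c^2)^{-1/2}$ singularities in the original characteristic decomposition are exactly cancelled by the vanishing of $dt$ along the $\pm$ characteristics, so that the new coefficients extend continuously up to $t=0$ with a clean regularity/singularity split. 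The sign conditions $\cos\hat\theta\,\varphi'-\sin\hat\theta>0$ and $\cos\hat\theta+\sin\hat\theta\,\varphi'>0$ in \eqref{1.3} guarantee that both characteristic families leave $\Gamma$ transversally into the supersonic side, so this change of coordinates is well-defined initially; the monotonicity hypotheses $\hat p'\le 0$ and $\hat\theta'<0$ will be used at the bootstrap stage to preserve analogous monotonicity for the iterates.

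I would then solve the transformed system by Picard iteration in a weighted function space whose weight is a power of $t$ chosen to absorb the remaining mild degeneracy. Characteristic decompositions --- differentiating each equation along the $\pm$ directions and using the resulting commutator identities --- yield transport-type equations for the first derivatives of $\theta,q,S,B$ whose right-hand sides are quadratic in those same derivatives and thus controllable in the weighted norm; higher regularity is obtained by iterating the procedure, which is where the $C^4$ hypothesis on $\varphi$ and the boundary data in \eqref{1.3} is consumed. Finally I would invert the hodograph map: its Jacobian is computed explicitly in the new coordinates, shown to be strictly positive for $t>0$ (vanishing to a controlled order at $t=0$), and the pulled-back solution is the desired classical solution of \eqref{1.1} on the supersonic side of $\Gamma$.

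The main obstacle is engineering the new variables and the weight so that the singular $1/\sqrt{q^2-c^2}$ factors cancel cleanly against the vanishing characteristic speeds while the entropy/Bernoulli transport along $\Lambda_0$ --- absent in the irrotational case treated in \cite{ZZ1} --- does not re-inject singular source terms into the acoustic pair. Closing the weighted iteration estimates uniformly up to the sonic curve, in the presence of this coupling, is the heart of the argument.
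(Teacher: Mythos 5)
Your high-level framework matches the paper: transport $S,B$ along streamlines, isolate the sonic degeneracy through a partial hodograph transformation in which one coordinate vanishes like $\sqrt{q^2-c^2}$ (the paper uses $t=\cos\omega=\sqrt{q^2-c^2}/q$ together with $r=\theta$, whose validity as a parametrisation of $\Gamma$ comes from $\hat\theta'<0$), then close a Picard iteration in a $t$-weighted metric space and invert the map at the end. But there is a genuine gap precisely at the spot you flag as ``the heart of the argument,'' and it is not a matter of routine engineering. Your plan to take $(\theta,q,S,B)$ (or their first derivatives) directly as dependent quantities and rely on the right-hand sides being ``quadratic in those same derivatives'' reproduces the Zhang--Zheng template for the isentropic irrotational case, where $(\bar\partial^+c/c,\bar\partial^-c/c)$ closes nicely. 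For the full Euler system this fails: the authors explicitly note that the characteristic decomposition in terms of $c$ (equivalently $q$) is ``considerably formidable,'' and the entropy/Bernoulli terms reappear as singular source terms exactly as you fear.

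The paper's resolution, which your proposal does not contain, is the pair of ad hoc variables
\begin{align*}
\Xi&=\frac{1}{4\kappa}\ln\frac{\sin^2\omega}{\kappa+\sin^2\omega}
-\frac{1}{4\kappa}\Bigl(\tfrac{1}{\gamma}\ln S-\ln B\Bigr),\qquad
H=\frac{\bar\partial^+\bigl(\tfrac{1}{4\kappa\gamma}\ln S-\tfrac{1}{4\kappa}\ln B\bigr)}{G(\omega)},
\end{align*}
which absorb the entropy and Bernoulli contributions into the Mach-angle variable. With these, the system collapses to $\bar\partial^\pm\theta\pm\sin(2\omega)\bar\partial^\pm\Xi=0$ plus the transport law $\bar\partial^0 H=0$, the characteristic decomposition of $\Xi$ in \eqref{2.16} is symmetric and genuinely quadratic in $(\bar\partial^+\Xi,\bar\partial^-\Xi,H)$, and the triple $(\bar\partial^+\Xi,\bar\partial^-\Xi,H)$ is a \emph{closed} subsystem (see Remark \ref{rem1} and \eqref{3.1}) that does not see $S$ or $B$ separately. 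This is exactly what prevents the $\Lambda_0$-transport from ``re-injecting singular source terms into the acoustic pair''; in the hodograph coordinates it becomes the third, regular equation of \eqref{3.9}. Without a device of this type, the weighted iteration estimates you describe will not close: the source terms you would write down would still contain $S,B$ and their derivatives in non-quadratic combinations multiplied by the singular factor $1/t$, and the condition $\hat p'\le 0$ would have no visible role, whereas in the paper it is needed (via $\hat H\ge 0$) to keep the Jacobian $J$ in \eqref{3.5} away from zero for $t>0$.
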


The approach in this paper is inspired the method by Zhang and Zheng  in \cite{ZZ1, ZZ2} to deal with the steady isentropic irrotational Euler equations and pressure-gradient equations. However, compared to the isentropic and irrotational case, there are two main difficulties  arising from the effect of entropy and vorticity. First, we need to seek an appropriate coordinate system which  features the parabolic  degeneracy of the problem under consideration near the sonic curve. In contrast with the isentropic irrotational case, the non-existence of potential function $\Phi$ for the full Euler flows makes it impossible to use the coordinate system $(\sqrt{q^2-c^2},\Phi)$,  which  plays a key role in Zhang and Zheng's work,  for  the current  problem.   To overcome this difficulty, a new set of variables including the Mach angle function $\omega$ and the flow angle function $\theta$ are introduced, and  $(\cos\omega, \theta)$ are chosen  as the  independent coordinate system in order to clearly characterize the degeneracy near the sonic curve. It is observed that the partial hodograph transformation $(x,y)\rightarrow(\cos\omega, \theta)$ reduces the Euler equations to a new system which has a clearer regularity-singularity structure.  See \eqref{3.9} or \eqref{4.2} below. Second, we  define some new functions as the dependent variables which  makes  the  new system more convenient to analyze. In Zhang and Zheng's previous works,  they adopted  the characteristic decomposition in terms of local sound speed $c$ \cite{Li-Z-Z} and chose $(\bar{\pa}^+c/c,\bar{\pa}^-c/c)$ as the dependent quantities  in order to write the isentropic irrotational Euler equations into a tidy first-order hyperbolic system. However, for the full Euler equations, the characteristic decomposition in terms of  $c$ is considerably formidable  so that  $(\bar{\pa}^+c/c,\bar{\pa}^-c/c)$ are not suitable  to be  the dependent quantities any more. Instead, two novel variables $H$ and $\Xi$,  functions of Mach angle $\omega$, entropy $S$ and Bernoulli number $B$,  are introduced to derive the characteristic decomposition in terms of  $\Xi$.  See \eqref{a2} and \eqref{2.14} in Section 2. The variable $H$, together with $\theta$ and $\omega$, constitutes a closed subsystem.  The characteristic decomposition in terms of  $\Xi$ has a pretty symmetrical form. Then  $(\bar{\pa}^+\Xi,\bar{\pa}^-\Xi, H)$  are taken as the dependent quantities to obtain a closed system in the partial hodograph $(\cos\omega, \theta)$-plane. It turns out that this approach works well for the current problem and the related semi-hyperbolic problems \cite{Hu-Li}.
\vspace{0.2cm}

In order to give readers a better understanding of the procedure of the paper, we use the approach to deal with the classical Tricomi equation in Appendix \ref{sa}. We point out that the same problem for the Tricomi equation was studied by many investigators employing the fundamental solution method, e.g., \cite{Bi, Han-Hong}, while it cannot be applied to the nonlinear equations and also the transonic flow problems.
\vspace{0.2cm}

The rest of the paper is organized as follows.
In Section \ref{s2}, we introduce a new set of dependent variables, including the inclination angles and the variable $\Xi$, in order to write the Euler equations \eqref{1.1} in the characteristic form, and present the corresponding result based on  characteristic  decompositions.
In Section \ref{s3}, we reformulate the  problem by introducing a novel pair of independent variables and using the characteristic decomposition for $\Xi$. In Section \ref{s4}, we employ the iteration method to establish the existence of local classical solutions for the newly reformulated problem in a weighted metric space. We complete the proof of the main theorem by returning the  classical solution in the partial hodograph plane  into that in terms of original physical variables in Section \ref{s5}. Finally, the appendices are provided for the procedure to the classical Tricomi equation and the technique arguments of characteristic decompositions.

\section{Basic characteristic  decompositions in terms of Mach and flow angles}\label{s2}

In order to analyze the nonlinear problem under consideration in this paper, it is convenient to use the Mach angles, the flow angles, the entropy and the Bernoulli quantity as dependent variables. Hence we rewrite the governing equations, provide basic characteristic decompositions and reformulate the problem in this new framework.

\subsection{Full Euler equations and characteristic decompositions}

Assume that the solution of \eqref{1.1} is  smooth. Then the system can be rewritten as
\begin{align}\label{2.1}
\textbf{A}\textbf{W}_x+\textbf{B}\textbf{W}_y=0,
\end{align}
where the primitive variables and the coefficient matrices are
\begin{align*}
\textbf{W}=\left(
 \begin{array}{c}
    \rho \\
    u \\
    v \\
    p
 \end{array}
\right), \quad
\textbf{A}=\left(
 \begin{array}{cccc}
    u & \rho & 0 & 0 \\
    0 & u & 0 & \frac{1}{\rho} \\
    0 & 0 & u & 0 \\
    0 & \gamma p & 0 & u
 \end{array}
\right),
\quad \textbf{B}=
\left(
 \begin{array}{cccc}
    v & 0 & \rho & 0 \\
    0 & v & 0 & 0 \\
    0 & 0 & v & \frac{1}{\rho} \\
    0 & 0 & \gamma p & v
 \end{array}
\right).
\end{align*}
The eigenvalues $\Lambda$ are defined by finding the roots of $\|\Lambda \textbf{A} -\textbf{B}\|=0$, as expressed in \eqref{1.2}.
The left eigenvectors associated with the eigenvalues $\Lambda_0,\ \Lambda_{\pm}$ are, respectively,
$$
\ell_{01}=(0,u,v,0),\quad \ell_{02}=(c^2,0,0,-1),\quad \ell_\pm=(0,-\Lambda_\pm\gamma p,\gamma p,\Lambda_\pm u-v).
$$
Thus system \eqref{2.1} can be turned into the characteristic form, by standard manipulation,
\begin{align}\label{2.2}
\left\{
\begin{array}{l}
  uS_x+vS_y=0, \\
  uB_x+vB_y=0, \\
  -c\rho vu_x+c\rho uv_x\pm\sqrt{u^2+v^2-c^2}p_x \\
  \ \ \ \ +\Lambda_\pm(-c\rho vu_y+c\rho uv_y\pm\sqrt{u^2+v^2-c^2}p_y)=0,
\end{array}
\right.
\end{align}
where $S=p\rho^{-\gamma}$ is the entropy function and $B=\fr{u^2+v^2}{2}+\fr{c^2}{\gamma-1}$ is the Bernoulli function.
\vspace{0.2cm}

We introduce the inclination angles of characteristics as follows
\begin{align}\label{2.3}
\tan\alpha=\Lambda_+,\quad \tan\beta=\Lambda_-,\quad \tan\theta=\Lambda_0,
\end{align}
from which we have
\begin{align}\label{2.4}
\theta=\fr{\alpha+\beta}{2},\quad u=c\fr{\cos\theta}{\sin\omega},\quad v=c\fr{\sin\theta}{\sin\omega},
\end{align}
where $\omega=\fr{\alpha-\beta}{2}$ is the Mach angle function. Moreover, we introduce the following normalized directional derivatives along the characteristics
\begin{align}\label{2.5}
\begin{array}{l}
\bar{\pa}^+=\cos\alpha\pa_x+\sin\alpha\pa_y,\quad
\bar{\pa}^-=\cos\beta\pa_x+\sin\beta\pa_y, \\
\bar{\pa}^0=\cos\theta\pa_x+\sin\theta\pa_y, \ \quad \bar{\pa}^\perp=-\sin\theta\pa_x+\cos\theta\pa_y.
\end{array}
\end{align}
Then we have
\begin{align}\label{2.6}
\left\{
\begin{array}{l}
\pa_x=-\fr{\sin\beta\bar{\pa}^+-\sin\alpha\bar{\pa}^-}{\sin(2\omega)},\\[5pt] \pa_y=\fr{\cos\beta\bar{\pa}^+-\cos\alpha\bar{\pa}^-}{\sin(2\omega)},
\end{array}
\right. \qquad
\left\{
\begin{array}{l}
\bar{\pa}^0=\fr{\bar{\pa}^++\bar{\pa}^-}{2\cos\omega}, \\[5pt]
\bar{\pa}^\perp=\fr{\bar{\pa}^+-\bar{\pa}^-}{2\sin\omega}.
\end{array}
\right.
\end{align}
In terms of the variables $(S, B, \omega, \theta)$, system \eqref{2.2} can be transformed into a new form
\begin{align}\label{2.7}
\left\{
\begin{array}{l}
   \bar{\pa}^0S=0, \\
   \bar{\pa}^0B=0, \\
   \bar{\pa}^+\theta+\fr{\cos^2\omega}{\sin^2\omega+\kappa}\bar{\pa}^+\omega=
\fr{\sin(2\omega)}{4\kappa}\bigg(\fr{1}{\gamma}\bar{\pa}^+\ln S-\bar{\pa}^+\ln B\bigg), \\
   \bar{\pa}^-\theta-\fr{\cos^2\omega}{\sin^2\omega+\kappa}\bar{\pa}^-\omega=-
\fr{\sin(2\omega)}{4\kappa}\bigg(\fr{1}{\gamma}\bar{\pa}^-\ln S-\bar{\pa}^-\ln B\bigg),
  \end{array}
\right.
\end{align}
or
\begin{align}\label{a4}
\left\{
\begin{array}{l}
   \bar{\partial}^0S=0, \\
   \bar{\partial}^0B=0, \\
   \bar{\partial}^0\theta+\frac{\cos\omega\sin\omega}{\sin^2\omega+\kappa}\bar{\partial}^\perp\omega=
\fr{\sin^2\omega}{2\kappa} \bigg(\frac{1}{\gamma}\bar{\partial}^\perp\ln S-\bar{\partial}^\perp\ln B\bigg), \\
   \bar{\partial}^\perp\theta+\frac{\cos^2\omega\cot\omega}{\sin^2\omega+\kappa}\bar{\partial}^0\omega=0,
  \end{array}
\right.
\end{align}
where $\kappa=(\gamma-1)/2$. The detailed derivation of \eqref{2.7} is presented in Appendix \ref{App1}.  From this system, it is obvious that $(\theta, \omega, S, B)$ are more suitable to be taken as dependent variables.  Moreover, we want to further write this system in terms of characteristic directions $\bar\partial^+$ and $\bar\partial^-$ because the characteristic decompositions are more easily taken.  Before this,  we state the commutator relation between $\bar{\pa}^0$ and $\bar{\pa}^+$. The justification is  given  in Appendix \ref{App2}. Similar commutator relation between $\bar{\pa}^-$ and $\bar{\pa}^+$ can be found in \cite{Li-Z-Z}.

\begin{prop}
For any smooth quantity $I(x,y)$, there holds
\begin{align}\label{a7}
\bar{\partial}^0\bar{\partial}^+I-\bar{\partial}^+\bar{\partial}^0I=
\frac{1}{\sin\omega}[(\cos\omega\bar{\partial}^+\theta-
\bar{\partial}^0\alpha)\bar{\partial}^0I-
(\bar{\partial}^+\theta-\cos\omega\bar{\partial}^0\alpha)\bar{\partial}^+I].
\end{align}
\end{prop}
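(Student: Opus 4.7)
The plan is to verify the commutator identity by direct expansion in Cartesian derivatives, cancel the second-order mixed partials, and then rewrite the remaining first-order terms back in the characteristic frame using the invertibility of the change-of-basis between $(\partial_x,\partial_y)$ and $(\bar{\partial}^0,\bar{\partial}^+)$.

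First I would apply the product rule: using $\bar{\partial}^0=\cos\theta\,\partial_x+\sin\theta\,\partial_y$ and $\bar{\partial}^+=\cos\alpha\,\partial_x+\sin\alpha\,\partial_y$, compute
\[
\bar{\partial}^0\bar{\partial}^+ I=-\sin\alpha(\bar{\partial}^0\alpha)\,\partial_x I+\cos\alpha(\bar{\partial}^0\alpha)\,\partial_y I+\cos\alpha\,\bar{\partial}^0\partial_x I+\sin\alpha\,\bar{\partial}^0\partial_y I,
\]
and the analogous expression for $\bar{\partial}^+\bar{\partial}^0 I$ with $(\alpha,\theta)$ swapped. Subtracting, the four second-order terms collapse to combinations of $\partial_x\partial_y I=\partial_y\partial_x I$ that cancel identically, leaving only
\[
\bar{\partial}^0\bar{\partial}^+ I-\bar{\partial}^+\bar{\partial}^0 I=\bigl[\sin\theta(\bar{\partial}^+\theta)-\sin\alpha(\bar{\partial}^0\alpha)\bigr]\partial_x I+\bigl[\cos\alpha(\bar{\partial}^0\alpha)-\cos\theta(\bar{\partial}^+\theta)\bigr]\partial_y I.
\]

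Next I would invert the linear system \eqref{2.5}. Since the determinant is $\cos\theta\sin\alpha-\sin\theta\cos\alpha=\sin(\alpha-\theta)=\sin\omega$, we get
\[
\partial_x I=\frac{\sin\alpha\,\bar{\partial}^0 I-\sin\theta\,\bar{\partial}^+ I}{\sin\omega},\qquad \partial_y I=\frac{-\cos\alpha\,\bar{\partial}^0 I+\cos\theta\,\bar{\partial}^+ I}{\sin\omega}.
\]
Substituting into the expression above, the coefficient of $\bar{\partial}^0 I/\sin\omega$ simplifies via $\sin^2\alpha+\cos^2\alpha=1$ and $\cos(\alpha-\theta)=\cos\omega$ to $\cos\omega\,\bar{\partial}^+\theta-\bar{\partial}^0\alpha$; the coefficient of $\bar{\partial}^+ I/\sin\omega$ simplifies in the same way to $-(\bar{\partial}^+\theta-\cos\omega\,\bar{\partial}^0\alpha)$. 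This is exactly \eqref{a7}.

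There is no real obstacle here — the identity is purely algebraic, following from the fact that any two first-order operators $X,Y$ with smooth coefficients satisfy $[X,Y]$ being again a first-order operator, and then pinning down the coefficients. The only thing to keep track of carefully is the sign conventions when converting between the ordered pair $(\partial_x,\partial_y)$ and the oblique pair $(\bar{\partial}^0,\bar{\partial}^+)$; using $\alpha-\theta=\omega$ at the end cleanly packages $\sin\alpha\sin\theta+\cos\alpha\cos\theta$ into $\cos\omega$ and gives the stated symmetric form.
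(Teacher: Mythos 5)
Your proof is correct and follows essentially the same route as the paper's Appendix B derivation: expand both compositions in Cartesian derivatives, observe cancellation of the second-order terms, and then substitute $\partial_x,\partial_y$ expressed through $\bar\partial^0,\bar\partial^+$ using the determinant $\sin(\alpha-\theta)=\sin\omega$, with $\sin\alpha\sin\theta+\cos\alpha\cos\theta=\cos\omega$ collapsing the coefficients. The only cosmetic difference is that the paper obtains the inverse relations by passing through the $(\bar\partial^+,\bar\partial^-)$ formulas in \eqref{2.6}, whereas you invert the $2\times2$ system for $(\bar\partial^0,\bar\partial^+)$ directly; the computation and result are identical.
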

For any smooth function $I$ satisfying $\bar{\pa}^0I\equiv0$, we obtain by \eqref{2.6} that $\bar{\pa}^+I=-\bar{\pa}^-I$. Moreover, we use \eqref{a7} to find that
\begin{align}\label{2.8}
\bar{\partial}^0\bar{\partial}^+I=-\frac{\bar{\partial}^+\theta-
\cos\omega\bar{\partial}^0\alpha}{\sin\omega}\bar{\partial}^+I.
\end{align}
We employ \eqref{2.6} and the last two equations of \eqref{2.7} to obtain
\begin{align*}
\bar{\partial}^+\theta-\cos\omega\bar{\partial}^0\alpha
&=\frac{\bar{\partial}^+\theta-\bar{\partial}^-\theta}{2}-
\frac{\bar{\partial}^+\omega+\bar{\partial}^-\omega}{2}   \\
&=-\frac{(\kappa+1)\cos\omega}{\kappa+\sin^2\omega}\bar{\partial}^0\omega.
\end{align*}
Inserting this into \eqref{2.8} gives
\begin{align*}
\bar{\partial}^0\bar{\partial}^+I=
\frac{(\kappa+1)\cot\omega}{\kappa+\sin^2\omega}\bar{\partial}^0\omega\bar{\partial}^+I,
\end{align*}
and
\begin{align}\label{2.9}
\bar{\partial}^0\bar{\partial}^+I=\fr{\bar{\partial}^0G\cdot\bar{\partial}^+I}{G},
\end{align}
where
\begin{align}\label{2.10}
G=G(\omega)=\bigg(\fr{\sin^2\omega}{\kappa+\sin^2\omega}\bigg)^{\fr{\kappa+1}{2\kappa}}.
\end{align}
Thus we deduce from \eqref{2.9}
\begin{align}\label{2.11}
\bar{\partial}^0\bigg(\fr{\bar{\partial}^+I}{G}\bigg)=0.
\end{align}

Thanks to the first two equations of \eqref{2.7}, one has $\bar{\pa}^0\ln S=0$ and $\bar{\pa}^0\ln B=0$ which imply that $\bar{\pa}^+\ln S=-\bar{\pa}^-\ln S$ and $\bar{\pa}^+\ln B=-\bar{\pa}^-\ln B$. Therefore we have
$$
\bar{\pa}^0\bigg(\fr{1}{4\kappa\gamma}\ln S-\fr{1}{4\kappa}\ln B\bigg)=0, \ \ \bar{\pa}^+\bigg(\fr{1}{4\kappa\gamma}\ln S-\fr{1}{4\kappa}\ln B\bigg)
=-\bar{\pa}^-\bigg(\fr{1}{4\kappa\gamma}\ln S-\fr{1}{4\kappa}\ln B\bigg).
$$
Denote
\begin{align}\label{a2}
H=\fr{\bar{\pa}^+\bigg(\fr{1}{4\kappa\gamma}\ln S-\fr{1}{4\kappa}\ln B\bigg)}{G} =-\fr{\bar{\pa}^-\bigg(\fr{1}{4\kappa\gamma}\ln S-\fr{1}{4\kappa}\ln B\bigg)}{G}.
\end{align}
Then we use \eqref{2.7} and \eqref{2.11} to derive a system in terms of  the dependent variables $(\theta, \omega, H, S)$
\begin{align}\label{2.12}
\left\{
\begin{array}{l}
   \bar{\pa}^+\theta+\fr{\cos^2\omega}{\sin^2\omega+\kappa}\bar{\pa}^+\omega=
\sin(2\omega)GH, \\
   \bar{\pa}^-\theta-\fr{\cos^2\omega}{\sin^2\omega+\kappa}\bar{\pa}^-\omega=
\sin(2\omega)GH,\\
\bar{\pa}^0H=0,\\
\bar{\pa}^0S=0.
  \end{array}
\right.
\end{align}

\begin{rem}\label{rem1}
System \eqref{2.12} is equivalent to system \eqref{2.7} for smooth solutions. In addition, the first three equations of \eqref{2.12} constitute a closed subsystem.
\end{rem}


We further introduce a new variable
\begin{align}\label{2.14}
\Xi=\fr{1}{4\kappa}\ln\bigg(\fr{\sin^2\omega}{\kappa+\sin^2\omega}\bigg)-\fr{1}{4\kappa}\bigg(\fr{1}{\gamma}\ln S-\ln B\bigg),
\end{align}
and employ the last two equations of \eqref{2.7} to arrive at
\begin{align}\label{2.15}
\left\{
    \begin{array}{l}
       \bar{\pa}^+\theta+\sin(2\omega)\bar{\pa}^+\Xi=0, \\
       \bar{\pa}^-\theta-\sin(2\omega)\bar{\pa}^-\Xi=0,
    \end{array}
\right.
\end{align}
or
\begin{align}\label{a1}
\left\{
    \begin{array}{l}
       \bar{\pa}^0\theta+2\sin^2\omega\bar{\pa}^\perp \Xi=0, \\
       \bar{\pa}^\perp\theta+2\cos^2\omega\bar{\pa}^0\Xi=0.
    \end{array}
\right.
\end{align}
The introduction of the new variable $\Xi$ allows us to obtain the following characteristic decompositions.
\begin{align}\label{2.16}
\left\{
\begin{array}{l}
\bar{\pa}^-\bar{\pa}^+\Xi =\fr{\kappa\bar{\pa}^+\Xi+(\kappa+\sin^2\omega)GH}{\cos^2\omega}[\bar{\pa}^+\Xi -\cos(2\omega)\bar{\pa}^-\Xi]+\fr{\bar{\pa}^+\Xi}{\cos^2\omega}[\bar{\pa}^+\Xi+\cos^2(2\omega)\bar{\pa}^-\Xi], \\[6pt]
\bar{\pa}^+\bar{\pa}^-\Xi =\fr{\kappa\bar{\pa}^-\Xi-(\kappa+\sin^2\omega)GH}{\cos^2\omega}[\bar{\pa}^-\Xi -\cos(2\omega)\bar{\pa}^+\Xi]+\fr{\bar{\pa}^-\Xi}{\cos^2\omega}[\bar{\pa}^-\Xi+\cos^2(2\omega)\bar{\pa}^+\Xi].
\end{array}
\right.
\end{align}
The derivation of the characteristic decompositions in terms of  $\Xi$ is  given in Appendix \ref{App3}.

\subsection{The boundary data and restatement of the main result}

We  consider the boundary data for system \eqref{2.12} subject to the boundary condition \eqref{1.3}. Denote
\begin{align}\label{a6}
\hat{S}(x)=\hat{p}(x)\hat{\rho}^{-\gamma}(x)>0,\quad \hat{B}(x)=\fr{\hat{u}^2(x)+\hat{v}^2(x)}{2}+\fr{\gamma}{\gamma-1}\fr{\hat{p}(x)}{\hat{\rho}(x)}>0,\ \ \forall\ x\in[x_1,x_2].
\end{align}
These are boundary data for the entropy function $S$ and the Bernoulli quantity.
 Note that $\omega={\pi}/{2}$ on $\Gamma$. Then by the definition of $H$, one has the boundary data of $H$ on $\Gamma$
\begin{align}\label{2.17}
\hat{H}(x)=\fr{1}{2\gamma(\gamma-1)}\bigg(\fr{\gamma+1}{2}\bigg)^{\fr{\gamma+1}{2(\gamma-1)}}\cdot \fr{\hat{B}^\gamma}{\hat{S}}\cdot\bar{\pa}^+\bigg(\fr{S}{B^\gamma}\bigg)\bigg|_{\Gamma}.
\end{align}
Making use of \eqref{a6} and noticing the fact $c=q$ on $\Gamma$ yields
\begin{align}\label{a8}
\fr{\hat{B}^\gamma}{\hat{S}} =\fr{\bigg(\fr{\gamma\hat{p}}{2\hat{\rho}}+\fr{\gamma}{\gamma-1}\fr{\hat{p}}{\hat{\rho}}\bigg)^\gamma} {\hat{p}\hat{\rho}^{-\gamma}} =\bigg(\fr{\gamma(\gamma+1)}{2(\gamma-1)}\bigg)^\gamma\hat{p}^{\gamma-1}.
\end{align}
Moreover, we employ the equation $\bar{\pa}^0S=0$ and the notation $\hat{S}(x)=S(x,\varphi(x))$ on $\Gamma$ to obtain
$$
S_x(x,\varphi(x))=\fr{\sin\hat{\theta}\hat{S}'}{\sin\hat{\theta}-\cos\hat{\theta}\varphi'},\quad S_y(x,\varphi(x))=\fr{\cos\hat{\theta}\hat{S}'}{\cos\hat{\theta}\varphi'-\sin\hat{\theta}},
$$
and
$$
\bar{\pa}^+S|_{\Gamma}=\fr{\hat{S}'}{\cos\hat{\theta}\varphi'-\sin\hat{\theta}}.
$$
Similarly, one has
$$
\bar{\pa}^+B|_{\Gamma}=\fr{\hat{B}'}{\cos\hat{\theta}\varphi'-\sin\hat{\theta}}.
$$
Hence there holds
$$
\bar{\pa}^+\bigg(\fr{S}{B^\gamma}\bigg)\bigg|_{\Gamma} =\fr{1}{\cos\hat{\theta}\varphi'-\sin\hat{\theta}}\bigg(\fr{\hat{S}}{\hat{B}^\gamma}\bigg)',
$$
which, combined with \eqref{a8}, gives
$$
\bar{\pa}^+\bigg(\fr{S}{B^\gamma}\bigg)\bigg|_{\Gamma} =\bigg(\fr{2(\gamma-1)}{\gamma(\gamma+1)}\bigg)^\gamma \fr{(1-\gamma)\hat{p}^{-\gamma}\hat{p}'}{\cos\hat{\theta}\varphi'-\sin\hat{\theta}}.
$$
Putting the above and \eqref{a8} into \eqref{2.17}, one  arrives at
$$
\hat{H}(x)=-\bigg(\fr{\gamma+1}{2}\bigg)^{\fr{\gamma+1}{2(\gamma-1)}} \fr{\hat{p}'(x)}{2\gamma\hat{p}(x)[\cos\hat{\theta}(x)\varphi'(x)-\sin\hat{\theta}(x)]}.
$$

Therefore we obtain the boundary data $(\theta, \omega, H, S)$ on $\Gamma$ with
\begin{align}\label{2.18}
\theta=\hat{\theta}(x)\in C^4([x_1,x_2]),\quad \omega=\fr{\pi}{2},\quad H=\hat{H}(x)\in C^3([x_1,x_2]),\ \ \ \hat S(x) \in C^4([x_1,x_2]).
\end{align}
The constraints \eqref{1.3} on the boundary data become
\begin{align}\label{2.19}
\hat{H}\geq0, \quad \hat{\theta}'<0,\quad
\cos\hat{\theta}\varphi'-\sin\hat{\theta}>0,  \quad
\cos\hat{\theta}+\sin\hat{\theta}\varphi'>0, \ \ \forall\ x\in[x_1,x_2].
\end{align}
Then Theorem \ref{thm2} is restated    in the next theorem.
\begin{thm}\label{thm1}
Let conditions \eqref{2.19} be satisfied. Then the boundary problem \eqref{2.12} --\eqref{2.18} admits a
classical solution in the region $\omega<\fr{\pi}{2}$ near the sonic curve $\Gamma$.
\end{thm}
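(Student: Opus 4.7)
The plan is to prove Theorem \ref{thm1} in three stages: (i) flatten the sonic curve $\Gamma$ by a partial hodograph transformation $(x,y)\mapsto(\sigma,\theta)$ with $\sigma=\cos\omega$, so that $\Gamma$ becomes the coordinate line $\{\sigma=0\}$; (ii) solve the resulting reformulated first-order system for the unknowns $(\bar\partial^+\Xi,\bar\partial^-\Xi,H,S,x,y)$ by Picard iteration in a weighted metric space; and (iii) invert the hodograph and reconstruct $(\rho,u,v,p)$ from $(\omega,\theta,S,H)$.

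For stage (i), the condition $\hat\theta'<0$ in \eqref{2.19} makes $x\mapsto\hat\theta(x)$ invertible on $[x_1,x_2]$, so the boundary data \eqref{2.18} translate to $\sigma=0$ as $x(0,\theta)=\hat x(\theta)$, $y(0,\theta)=\varphi(\hat x(\theta))$, $H(0,\theta)=\hat H(\hat x(\theta))$, $S(0,\theta)=\hat S(\hat x(\theta))$, together with initial values of $\bar\partial^\pm\Xi$ at $\sigma=0$ read off from \eqref{2.15}. Pulling back the characteristic decompositions \eqref{2.16} and the transport laws $\bar\partial^0 H=\bar\partial^0 S=0$ of \eqref{2.12} through this change of variables yields a closed quasilinear system in a strip $\{0\le\sigma\le\sigma_*,\theta\in J\}$. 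Its principal part is transport along the images of the $\pm$-characteristic curves in the $(\sigma,\theta)$-plane, and its right-hand sides inherit factors $1/\cos^2\omega=1/\sigma^2$ from \eqref{2.16}. However, because the identity $\bar\partial^+\Xi+\bar\partial^-\Xi=2\cos\omega\,\bar\partial^0\Xi$ (coming from \eqref{2.6}) forces the brackets $\bar\partial^+\Xi-\cos(2\omega)\bar\partial^-\Xi$ and $\bar\partial^+\Xi+\cos^2(2\omega)\bar\partial^-\Xi$ appearing there to vanish to order $\sigma$ on $\Gamma$, only a $1/\sigma$-type singularity actually survives; this is the regularity-singularity structure advertised in the introduction.

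For stage (ii), I would construct an approximating sequence $(\bar\partial^+\Xi_n,\bar\partial^-\Xi_n,H_n,S_n,x_n,y_n)$ by freezing the nonlinearities at the $n$-th iterate and integrating the resulting linear transport equations in $\sigma$ from $\sigma=0$ upward, with boundary data as initial conditions. The functional space is a ball of $C^1$ functions on the strip equipped with a norm that weights $\theta$-derivatives by a suitable power of $\sigma$, so that the $\sigma$-integrals of the singular coefficients remain uniformly bounded. Invariance of the ball is obtained for $\sigma_*$ small enough by using the sign conditions $\hat H\ge0,\ \hat\theta'<0,\ \cos\hat\theta\varphi'-\sin\hat\theta>0,\ \cos\hat\theta+\sin\hat\theta\varphi'>0$ from \eqref{2.19}, propagated for small $\sigma$ by continuity, to control the quadratic source terms; contractivity in a weaker norm for differences of successive iterates then produces a $C^1$ fixed point, i.e.\ a classical solution of the reformulated problem.

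Stage (iii) amounts to checking that the limit map $(\sigma,\theta)\mapsto(x(\sigma,\theta),y(\sigma,\theta))$ is a $C^1$-diffeomorphism onto a one-sided neighborhood of $\Gamma$: its Jacobian is positive on $\{\sigma=0\}$ by the two sign conditions $\cos\hat\theta\varphi'-\sin\hat\theta>0$ and $\cos\hat\theta+\sin\hat\theta\varphi'>0$, hence stays positive for small $\sigma>0$. Inversion then supplies smooth $\omega(x,y),\theta(x,y)$, after which the formulas \eqref{2.4} together with conservation of $S$ and $B$ along streamlines reconstruct $(\rho,u,v,p)$ and one verifies \eqref{1.1} by direct substitution. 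The hard part will be stage (ii): closing the invariance and contraction estimates for the iteration despite the $1/\sigma$-singular coefficients in \eqref{2.16}, which requires a delicately tuned choice of the $\sigma$-weight and a careful exploitation of the algebraic cancellations built into the characteristic decomposition.
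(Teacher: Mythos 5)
Your overall architecture coincides with the paper's: same partial hodograph coordinates $(\sigma,\theta)=(\cos\omega,\theta)$, same dependent variables $(\bar\partial^+\Xi,\bar\partial^-\Xi,H)$ built from the characteristic decomposition \eqref{2.16}, same Picard iteration in a weighted metric space, and the same inversion argument at the end. Your observation that the brackets in \eqref{2.16} vanish to order $\cos\omega$ on $\Gamma$ because of $\bar\partial^+\Xi+\bar\partial^-\Xi=2\cos\omega\,\bar\partial^0\Xi$ is correct and matches how \eqref{3.9} acquires the singular factor $(\overline U+\overline V)/t$. Treating $S$, $x$, $y$ as afterthoughts rather than unknowns in the iteration is slightly cleaner than your plan of iterating on all six (the paper does exactly this, using Remark \ref{rem1}'s closed subsystem and recovering $x,y,S,B$ in Section \ref{s5}), but that difference is cosmetic.

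The genuine gap is in stage (ii). You describe the weighted space as a ball of $C^1$ functions whose $\theta$-derivatives are weighted by a power of $\sigma$, without subtracting anything from the unknowns. That cannot close the estimates. In the hodograph form the singular term is
$\frac{1}{\sigma}\bigl(\bar\partial^+\Xi+\bar\partial^-\Xi\bigr)$,
which at $\sigma=0$ is \emph{of order one} (it tends to $2a_1(r)\ne 0$), not small; hence in the fixed-point estimate the contribution of this term to the difference $U-\widehat U$ is $\int_0^\xi \sigma^{-1}\bigl|(u-\hat u)+(v-\hat v)\bigr|\,\mathrm d\sigma$, which for two generic iterates agreeing only on $\{\sigma=0\}$ produces at best a Lipschitz constant $1$, not a contraction, and if you only control $\theta$-derivatives in the weight the integral even diverges logarithmically. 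The mechanism the paper uses to fix this, and which is missing from your proposal, is the second-order Taylor subtraction \eqref{4.1}: one replaces $(\overline U,\overline V,\overline H)$ by the error terms $(U,V,W)$ that vanish together with their first $t$-derivatives on $\{t=0\}$, and the metric \eqref{4.6} weights the \emph{functions themselves} (and their $r$- and $rr$-derivatives) by $t^{-2}$, as enforced by properties (P1)--(P4). It is precisely this second-order vanishing that turns $(U+V)/(2t)$ into an $O(t)$ quantity and yields the contraction constant $1/2$ in \eqref{4.66}--\eqref{4.69}. Your appeal to "a delicately tuned choice of the $\sigma$-weight" hints at the right direction but does not identify the Taylor-subtraction step that makes the tuned weight legitimate; without it, the iterates do not lie in a space where the $\sigma^{-2}$ weight is finite and the invariance and contraction estimates fail.
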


\section{Reformulated problem in a partial hodograph plane}\label{s3}

Since the parabolic degeneracy on the sonic curve may result in singularity, we need to single out  the feature of governing equations near the sonic curve. For this purpose, we introduce a new partial hodograph transformation and derive a new system of governing equations.

\subsection{A partial hodograph transformation}

Denote $(\widetilde{U}, \widetilde{V})=(\bar{\pa}^+\Xi, \bar{\pa}^-\Xi)$. Then we have  a new system in terms of the variables $(\widetilde{U}, \widetilde{V}, H)$ from \eqref{2.12} and \eqref{2.16},
\begin{align}\label{3.1}
\left\{
\begin{array}{l}
\bar{\pa}^-\widetilde{U} =\fr{\kappa\widetilde{U}+(\kappa+\sin^2\omega)GH}{\cos^2\omega}[\widetilde{U} -\cos(2\omega)\widetilde{V}]+\fr{\widetilde{U}}{\cos^2\omega}[\widetilde{U}+\cos^2(2\omega)\widetilde{V}], \\[4pt]
\bar{\pa}^+\widetilde{V} =\fr{\kappa\widetilde{V}-(\kappa+\sin^2\omega)GH}{\cos^2\omega}[\widetilde{V} -\cos(2\omega)\widetilde{U}]+\fr{\widetilde{V}}{\cos^2\omega}[\widetilde{V}+\cos^2(2\omega)\widetilde{U}], \\
\bar{\pa}^0H=0.
\end{array}
\right.
\end{align}
The boundary data for  $\widetilde{U}$ and $\widetilde{V}$ on $\Gamma$ are prescribed as follows.  Due to \eqref{2.6}, one has $\bar{\pa}^+\Xi+\bar{\pa}^-\Xi=2\cos\omega\bar{\pa}^0\Xi$,  which implies that $\bar{\pa}^+\Xi=-\bar{\pa}^-\Xi$ on $\Gamma$. Making use of \eqref{a1} leads to $\bar{\pa}^+\Xi=-\bar{\pa}^-\Xi=-\bar{\pa}^0\theta/2$ on $\Gamma$.
By using \eqref{a1} again, we see that $\bar{\pa}^\perp\theta=0$ on $\Gamma$, which, together with the boundary value $\theta=\hat{\theta}(x)$,  gives
\begin{align*}
\theta_x(x,\varphi(x))=\fr{\hat{\theta}'\cos\hat{\theta}}{\cos\hat{\theta}+\varphi'\sin\hat{\theta}}, \quad \theta_y(x,\varphi(x))=\fr{\hat{\theta}'\sin\hat{\theta}}{\cos\hat{\theta}+\varphi'\sin\hat{\theta}},
\end{align*}
and
\begin{align}\label{3.2}
\widetilde{U}|_\Gamma=-\widetilde{V}|_\Gamma =-\fr{\bar{\pa}^0\theta}{2}\bigg|_\Gamma=\fr{-\hat{\theta}'}{2(\cos\hat{\theta}+\varphi'\sin\hat{\theta})}. =:-\widetilde{a}_0(x)>0
\end{align}
where  \eqref{2.19} is used.
\vspace{0.2cm}

Now we introduce a partial hodograph transformation $(x,y)\rightarrow(t,r)$ by defining
\begin{align}\label{3.3}
t=\cos\omega(x,y),\quad r=\theta(x,y).
\end{align}
The Jacobian of this transformation is
\begin{align}\label{3.4}
J:=\fr{\pa(t,r)}{\pa(x,y)}&=\sin\omega(\theta_x\omega_y-\theta_y\omega_x) \nonumber \\
&=\fr{\bar{\pa}^+\omega\bar{\pa}^-\theta-\bar{\pa}^+\theta\bar{\pa}^-\omega}{2\cos\omega}
\nonumber \\
&=\sin\omega(\bar{\pa}^+\omega\bar{\pa}^-\Xi+\bar{\pa}^-\omega\bar{\pa}^+\Xi).
\end{align}
Thanks to \eqref{2.14} and \eqref{a2}, one has
\begin{align}\label{a3}
\bar{\pa}^\pm\omega=\fr{2\sin\omega(\kappa+\sin^2\omega)}{\cos\omega}(\bar{\pa}^\pm\Xi\pm GH).
\end{align}
Putting the above into \eqref{3.4} suggests
\begin{align}\label{3.5}
J=\fr{2F}{t}[2\widetilde{U}\widetilde{V}+GH(\widetilde{V}-\widetilde{U})],
\end{align}
where
\begin{align}\label{3.6}
F=F(t)=(1-t^2)(\kappa+1-t^2),\quad G=G(t)=\bigg(\fr{1-t^2}{\kappa+1-t^2}\bigg)^{\fr{\kappa+1}{2\kappa}}.
\end{align}
We combine \eqref{3.5} and \eqref{3.2} and recall the condition $\hat{H}\geq0$ to see that $J\neq0$ away from $t=0$. Clearly, the singularity near the sonic curve $t=0$ is singled out.
\vspace{0.2cm}

In terms of this new coordinates $(t,r)$, one has
$$
\bar{\pa}^i=-\sin\omega\bar{\pa}^i\omega\pa_t+\bar{\pa}^i\theta\pa_r, \ \ i=\pm,0,
$$
or
\begin{align}\label{3.8}
\begin{array}{l}
\bar{\pa}^+=-\fr{2F}{t}(\widetilde{U}+GH)\pa_t-2\sqrt{1-t^2}\widetilde{U}t\pa_r, \\[4pt]
\bar{\pa}^-=-\fr{2F}{t}(\widetilde{V}-GH)\pa_t+2\sqrt{1-t^2}\widetilde{V}t\pa_r, \\[4pt]
\bar{\pa}^0=-\fr{F}{t^2}(\widetilde{U}+\widetilde{V})\pa_t +\sqrt{1-t^2}(\widetilde{V}-\widetilde{U})\pa_r.
\end{array}
\end{align}
Denote $\overline{U}(t,r)=\widetilde{U}(x(t,r),y(t,r)), \overline{V}(t,r)=\widetilde{V}(x(t,r),y(t,r)), \overline{H}(t,r)=H(x(t,r),y(t,r))$. Then we obtain a new closed system for the variables $(\overline{U}, \overline{V}, \overline{H})$ under the coordinates $(t,r)$ as follows:
\begin{align}\label{3.9}
\left\{
\begin{array}{l}
\overline{U}_t-\fr{\sqrt{1-t^2}t^2\overline{V}}{F(\overline{V}-G\overline{H})}\overline{U}_r =-\fr{(\kappa+1)\overline{U}+(\kappa+1-t^2)G\overline{H}}{2F(\overline{V}-G\overline{H})}\cdot \fr{\overline{U}+\overline{V}}{t} +\fr{(\kappa+2-2t^2)\overline{U}+(\kappa+1-t^2)G\overline{H}}{F(\overline{V}-G\overline{H})}\overline{V}t, \\[8pt]
\overline{V}_t+\fr{\sqrt{1-t^2}t^2\overline{U}}{F(\overline{U}+G\overline{H})}\overline{V}_r =-\fr{(\kappa+1)\overline{V}-(\kappa+1-t^2)G\overline{H}}{2F(\overline{U}+G\overline{H})}\cdot \fr{\overline{U}+\overline{V}}{t} +\fr{(\kappa+2-2t^2)\overline{V}-(\kappa+1-t^2)G\overline{H}}{F(\overline{U}+G\overline{H})}\overline{U}t, \\[8pt]
\overline{H}_t+\fr{\sqrt{1-t^2}(\overline{U}-\overline{V})t^2}{F(\overline{U}+\overline{V})}\overline{H}_r=0.
\end{array}
\right.
\end{align}
We comment that system \eqref{3.9} is not a continuously differentiable system since it contains a singular factor $(\overline{U}+\overline{V})/t$.

\subsection{Boundary data in the partial hodograph plane}

We note that the sonic curve $\Gamma$: $y=\varphi(x)$, $x\in[x_1,x_2]$ on the $(x,y)$ plane is transformed to a segment on $t=0$ with $r\in[r_1,r_2]$ on the $(t,r)$ plane. Indeed, due to the assumption $\hat{\theta}'(x)<0$ by \eqref{2.19},  $r=\hat{\theta}(x)$ is a strictly decreasing smooth function,  which implies that  it can be expressed as  $x=\overline{x}(r)$ for $r\in[r_1,r_2]$.
\vspace{0.2cm}

We derive the value $\bar{\pa}^0\Xi$ on $\Gamma$. It follows from the fourth equation of \eqref{a4} and the second equation of \eqref{a1} that
\begin{align}\label{3.10}
\bar{\pa}^0\Xi=\fr{\bar{\pa}^0\sin\omega}{2\sin\omega(\kappa+\sin^2\omega)}.
\end{align}
Using the third equation of \eqref{a4}, \eqref{2.6} and \eqref{a2}, one obtains
$$
\bar{\pa}^\perp\sin\omega=-\fr{\kappa+\sin^2\omega}{\sin\omega}\bar{\pa}^0\theta+2(\kappa+\sin^2\omega)GH,
$$
which, along with \eqref{2.5} and \eqref{3.2}, leads to
\begin{align}\label{3.11}
-\sin\hat{\theta}(\pa_x\sin\omega)|_{\Gamma}+\cos\hat{\theta}(\pa_y\sin\omega)|_{\Gamma} &=-(\kappa+1)(\bar{\pa}^0\theta)_\Gamma+2G_0\hat{H} \nonumber \\
&=: 2(\kappa+1)(-\widetilde{a}_0+G_0\hat{H}),
\end{align}
where $G_0=(\kappa+1)^{-\fr{\kappa+1}{2\kappa}}$. Recalling the fact $\sin\omega=1$ on $\Gamma$ gives
$$
(\pa_x\sin\omega)|_{\Gamma}+\varphi'(\pa_y\sin\omega)|_{\Gamma}=0,
$$
which combined with \eqref{3.11} yields
$$
(\pa_x\sin\omega)|_{\Gamma}=\fr{-2(\kappa+1)\varphi'(-\widetilde{a}_0+G_0\hat{H})}{\cos\hat{\theta} +\varphi'\sin\hat{\theta}},\quad (\pa_y\sin\omega)|_{\Gamma}=\fr{2(\kappa+1)(-\widetilde{a}_0+G_0\hat{H})}{\cos\hat{\theta} +\varphi'\sin\hat{\theta}}.
$$
Putting the above into \eqref{3.10}, we get by \eqref{2.19} and \eqref{3.2}
\begin{align}\label{3.12}
(\bar{\pa}^0\Xi)_\Gamma=\fr{(\bar{\pa}^0\sin\omega)_\Gamma}{2(\kappa+1)} =\fr{\sin\hat{\theta}-\varphi'\cos\hat{\theta}}{\cos\hat{\theta} +\varphi'\sin\hat{\theta}}(-\widetilde{a}_0+G_0\hat{H})=:\widetilde{a}_1(x)<0.
\end{align}

Let $a_0(r)=\widetilde{a}_0(\overline{x}(r))$, $a_1(r)=\widetilde{a}_1(\overline{x}(r))$ and $H_0(r)=\hat{H}(\overline{x}(r))$. We study system \eqref{3.9} with the following boundary conditions:
\begin{align}
&\overline{U}(0,r)=-a_0(r),\quad \overline{V}(0,r)=a_0(r),\quad \overline{H}(0,r)=H_0(r), \label{3.13} \\
&\overline{U}_t(0,r)=a_1(r),\quad \overline{V}_t(0,r)=a_1(r),\quad \overline{H}_t(0,r)=0, \label{3.14}
\end{align}
for $r\in[r_1,r_2]$. The conditions in \eqref{3.13} are obvious, while the conditions in \eqref{3.14} come from system \eqref{3.9}, \eqref{3.12} and the requirement for continuous differentiability of solutions. It is not difficult to verified that
\begin{align}\label{3.15}
\begin{array}{c}
(a_0(r), a_1(r), H_0(r))\in C^3([r_1,r_2]), \\
H_0(r)\geq0 \ \ a_0(r)\leq-\eps_0,\ \  a_1(r)\leq-\eps_0
\end{array}
\end{align}
for some constant positive $\eps_0$. So we reformulate Problem \ref{p1} into the following new problem in the hodograph plane.

\begin{prob}\label{prob2}
Assume \eqref{3.15} holds. We want to seek a local classical solution for system \eqref{3.9} with the boundary conditions \eqref{3.13}-\eqref{3.14} in the region $t>0$.
\end{prob}

\section{The existence theorem in the hodograph plane}\label{s4}

This section serves to establish the existence of  smooth solution locally near the sonic curve in the hodograph plane.  Recall the boundary data in \eqref{3.13} and \eqref{3.14}.  We make the Taylor expansion for $(\overline{U}, \overline{V}, \overline{H})$ and introduce the higher order error terms for the variables $(\overline{U}, \overline{V}, \overline{H})$ as follows,
\begin{align}\label{4.1}
\left\{
\begin{array}{l}
U=\overline{U}+a_0(r)-a_1(r)t, \\
V=\overline{V}-a_0(r)-a_1(r)t, \\
W=\overline{H}-H_0(r).
\end{array}
\right.
\end{align}
Then system \eqref{3.9} can be transformed to
\begin{align}\label{4.2}
\left\{
\begin{array}{l}
U_t-\fr{\sqrt{1-t^2}(V+a_0+a_1t)t^2}{F(t)[V-G(t)W+\psi(t,r)]}U_r=\fr{U+V}{2t}+b_1(U,V,W,t,r), \\[5pt]
V_t+\fr{\sqrt{1-t^2}(U-a_0+a_1t)t^2}{F(t)[U+G(t)W+\phi(t,r)]}V_r=\fr{U+V}{2t}+b_2(U,V,W,t,r), \\[5pt]
W_t+\fr{\sqrt{1-t^2}(U-V-2a_0)t^2}{F(t)(U+V+2a_1t)}W_r=b_3(U,V,W,t,r),
\end{array}
\right.
\end{align}
where $F(t)$ and $G(t)$ are defined as in \eqref{3.6}, $\psi(t,r)=a_0(r)+a_1(r)t-G(t)H_0(r)$, $\phi(t,r)=-a_0(r)+a_1(r)t+G(t)H_0(r)$, and
\begin{align*}
b_1(U,V,W,t,r)&=-\bigg(\fr{U+V}{2t}+a_1\bigg)\bigg\{\fr{(\kappa+1)(U+V+2a_1t)}{F[V-GW+\psi]} \nonumber \\[4pt] &\qquad \qquad -\fr{t^2[(\kappa+2-t^2)(V+a_0+a_1t) -(\kappa+1-t^2)G(W+H_0)]}{F[V-GW+\psi]}\bigg\}
 \nonumber \\[4pt]
 &+\fr{t^2\sqrt{1-t^2}(-a_{0}'+a_{1}'t)(V+a_0+a_1t)}{F[V-GW+\psi]}
 \nonumber \\[4pt]
 &+\fr{(\kappa+2-2t^2)(U-a_0+a_1t)+(\kappa+1-t^2)G(W+H_0)}{F[V-GW+\psi]}(V+a_0+a_1t)t,
\end{align*}
\begin{align*}
b_2(U,V,W,t,r)=&-\bigg(\fr{U+V}{2t}+a_1\bigg)\bigg\{\fr{(\kappa+1)(U+V+2a_1t)}{F[U+GW+\phi]} \nonumber \\[4pt] &\qquad \qquad -\fr{t^2[(\kappa+2-t^2)(U-a_0+a_1t)+(\kappa+1-t^2)G(W+H_0)]}{F[U+GW+\phi]}\bigg\}
 \nonumber \\[4pt]
 &-\fr{t^2\sqrt{1-t^2}(a_{0}'+a_{1}'t)(U-a_0+a_1t)}{F[U+GW+\phi]}
 \nonumber \\[4pt]
 &+\fr{(\kappa+2-2t^2)(V+a_0+a_1t)-(\kappa+1-t^2)G(W+H_0)}{F[U+GW+\phi]}(U-a_0+a_1t)t,
\end{align*}
\begin{align*}
b_3(U,V,W,t,r)=-\fr{t^2\sqrt{1-t^2}(U-V-2a_0)H_{0}'}{F(U+V+2a_1t)}.
\end{align*}
The three eigenvalues of system \eqref{4.2} are expressed as
\begin{align}\label{4.4}
\lambda_1(t,r)&=-\fr{\sqrt{1-t^2}(V+a_0+a_1t)t^2}{F(t)[V-G(t)W+\psi(t,r)]},\
\lambda_2(t,r)=\fr{\sqrt{1-t^2}(U-a_0+a_1t)t^2}{F(t)[U+G(t)W+\phi(t,r)]}, \nonumber \\[5pt]
\lambda_3(t,r)&=\fr{\sqrt{1-t^2}(U-V-2a_0)t^2}{F(t)(U+V+2a_1t)}.
\end{align}

Corresponding to the boundary conditions \eqref{3.13}-\eqref{3.14}, one has
\begin{align}\label{4.3}
\begin{array}{l}
U(0,r)=V(0,r)=W(0,r)=0, \\
U_t(0,r)=V_t(0,r)=W_t(0,r)=0,
\end{array}
\quad r\in[r_1,r_2].
\end{align}
We now define a region in the plane $(t,r)$
$$
D_\delta=\{(t,r)|\ 0\leq t\leq\delta,\ \bar{r}_1(t)\leq r\leq \bar{r}_2(t)\},
$$
where $\bar{r}_1(t)$ and $\bar{r}_2(t)$ are smooth functions satisfying $\bar{r}_1(0)=r_1$, $\bar{r}_2(0)=r_2$ and $\bar{r}_1(t)<\bar{r}_2(t)$ for $t\in[0,\delta]$. Hence, Problem \ref{prob2} is equivalent to the following problem.

\begin{prob}\label{prob3}
Assume \eqref{3.15} holds. Then we want to seek a classical solution for system \eqref{4.2} with the boundary condition \eqref{4.3} in the region $D_\delta$ for some constant $\delta>0$.
\end{prob}

\subsection{A weighted metric  space}

Following Zhang and Zheng \cite{ZZ1}, we first give the definition of admissible functions and strong determinate domain to system \eqref{4.2}.
\begin{defn}[Admissible functions]\label{def1}
The vector function $\mathbf{F}=(f_1(t,r),f_2(t,r),f_3(t,r))^T$, defined for all $(t,r)\in D_\delta$, is an  admissible vector function if the following hold:

\noindent (i) The functions $f_i(i=1,2,3)$ are continuous on the region $D_\delta$.

\noindent (ii) The functions $f_i(i=1,2,3)$ satisfy the boundary value conditions in \eqref{4.3}.

\noindent (iii) There holds $\big\|\fr{f_1}{t^2}\big\|_\infty+\big\|\fr{f_2}{t^2}\big\|_\infty +\big\|\fr{f_3}{t^2}\big\|_\infty\leq M$ for some positive constant $M$.
\end{defn}
We denote $\mathcal{W}_{\delta}^M$  the set of all admissible vector functions. For an admissible vector function $\mathbf{F}=(f_1,f_2,f_3)^T\in\mathcal{W}_{\delta}^M$, we define the characteristic curves $r_i(t;\xi,\eta) (i=1,2,3)$ passing through a point $(\xi,\eta)\in D_\delta$ as follows:
\begin{align}\label{4.5}
\left\{
\begin{array}{l}
\fr{\rm d}{{\rm d}t}r_i(t;\xi,\eta)=\lambda_i(t, r_i(t;\xi,\eta)), \\
r_i(\xi;\xi,\eta)=\eta,
\end{array}
\right.
\end{align}
where $\lambda_i (i=1,2,3)$ are defined in \eqref{4.4}.

\begin{defn}[Strong determinate domains]\label{def2}
We call $D_\delta$ a strong determinate domain for system \eqref{4.2} if for any admissible vector function $\mathbf{F}=(f_1,f_2,f_3)^T$ and for any point $(\xi,\eta)\in D_\delta$, the characteristic curves $r_i(t;\xi,\eta) (i=1,2,3)$ stay insider $D_\delta$ for all $0\leq t\leq\xi$ until the intersection with the line $t=0$.
\end{defn}

We proceed to  construct solutions for Problem \ref{prob3} in a function class $\mathcal{S}_{\delta}^M$ which incorporates all continuously differentiable vector functions $\mathbf{F}=(f_1,f_2,f_3)^T: D_\delta\rightarrow\mathbb{R}^3$ satisfying the following properties:
\begin{align*}
&{\rm (P1)}\ \mathbf{F}(0,r)=\mathbf{F}_t(0,r)=0, \rule{10cm}{0ex} \\[3pt]
&{\rm (P2)}\ \bigg\|\fr{\mathbf{F}(t,r)}{t^2}\bigg\|_\infty\leq M, \rule{10cm}{0ex}
\\[3pt]
&{\rm (P3)}\ \bigg\|\fr{\pa_r\mathbf{F}(t,r)}{t^2}\bigg\|_\infty\leq M, \rule{10cm}{0ex}
\\[3pt]
&{\rm (P4)}\ \pa_r\mathbf{F}(t,r)\ {\rm is\ Lipschitz\ continuous\ with\ respect\ to\ r\ and}\ \bigg\|\fr{\pa_{rr}\mathbf{F}(t,r)}{t^2}\bigg\|_\infty\leq M,
\end{align*}
where $\|\cdot\|_\infty$ denotes the supremum norm on the domain $D_\delta$. We note that $\mathcal{S}_{\delta}^M$ is a subset of $\mathcal{W}_{\delta}^M$ and both $\mathcal{S}_{\delta}^M$ and $\mathcal{W}_{\delta}^M$ are subsets of $C^0(D_\delta, \mathbb{R}^3)$. For any elements $\mathbf{F}=(f_1,f_2,f_3)^T, \mathbf{G}=(g_1,g_2,g_3)^T$ in the set $\mathcal{W}_{\delta}^M$, we define the weighted metric as follows:
\begin{align}\label{4.6}
d(\mathbf{F}, \mathbf{G}):=\bigg\|\fr{f_1-g_1}{t^2}\bigg\|_\infty+ \bigg\|\fr{f_2-g_2}{t^2}\bigg\|_\infty +\bigg\|\fr{f_3-g_3}{t^2}\bigg\|_\infty.
\end{align}
It is not difficult to check that $(\mathcal{W}_{\delta}^M,d)$ is a complete metric space, while the subset $(\mathcal{S}_{\delta}^M,d)$ is not closed in the space $(\mathcal{W}_{\delta}^M,d)$.
\vspace{0.2cm}

For Problem \ref{prob3}, we have the the following existence theorem, to be proved in the next subsection.
\begin{thm}\label{thm3}

Let conditions \eqref{3.15} be fulfilled and $D_{\delta_0}$ be a strong determinate domain for
system \eqref{4.2}. Then there exists positive constants $\delta\in(0,\delta_0)$ and $M$ such that the boundary problem \eqref{4.2}--\eqref{4.3} admits a classical solution in the function class $\mathcal{S}_{\delta}^M$.
\end{thm}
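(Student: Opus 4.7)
The plan is Picard iteration in the complete metric space $(\mathcal{W}_\delta^M,d)$ with iterates constrained to $\mathcal{S}_\delta^M$. Given $\mathbf{F}^n=(U^n,V^n,W^n)\in\mathcal{S}_\delta^M$, I would freeze all nonlinear coefficients and right-hand sides of \eqref{4.2} at $\mathbf{F}^n$ to obtain three linear transport equations
\begin{align*}
U_t^{n+1}+\lambda_1^n U_r^{n+1}&=\frac{U^n+V^n}{2t}+b_1^n,\\
V_t^{n+1}+\lambda_2^n V_r^{n+1}&=\frac{U^n+V^n}{2t}+b_2^n,\\
W_t^{n+1}+\lambda_3^n W_r^{n+1}&=b_3^n,
\end{align*}
with homogeneous Cauchy data at $t=0$. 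The hypothesis that $D_{\delta_0}$ is a strong determinate domain guarantees that the characteristics $r_i^n(s;t,r)$ built from $\lambda_i^n$ via \eqref{4.5} remain inside $D_\delta$ and trace back to the sonic line for every $\delta\leq\delta_0$, so each iterate admits the explicit representation
\[
U^{n+1}(t,r)=\int_0^t\bigg(\frac{U^n+V^n}{2s}+b_1^n\bigg)\bigg|_{(s,r_1^n(s;t,r))}\,ds,
\]
and analogously for $V^{n+1}$ and $W^{n+1}$.

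The decisive point is that the admissibility bound $|U^n|,|V^n|\leq Ms^2$ turns the apparently singular source $(U^n+V^n)/(2s)$ into a harmless $O(Ms)$ quantity, while Taylor expansion of $b_i$ around $(U,V,W,t)=(0,0,0,0)$, together with $b_i^n(s,r)\to 0$ as $s\to 0^+$ for $\mathbf{F}^n\in\mathcal{S}_\delta^M$, yields $|b_i^n(s,r)|\leq C_0 s$ with $C_0$ depending only on the boundary data $a_0,a_1,H_0,\varphi$. Integration along the characteristic then gives $|U^{n+1}|\leq\tfrac{1}{2}(M+C_0)t^2$, which closes (P2) as soon as $M\geq C_0$, and (P1) follows because the integrand vanishes at $s=0$. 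Differentiating the integral representation in $r$ and using both the smooth dependence of $r_i^n(s;t,r)$ on its foot-point and the inductive bounds (P3)--(P4) for $\mathbf{F}^n$ produces matching bounds on $\partial_r\mathbf{F}^{n+1}$ and $\partial_{rr}\mathbf{F}^{n+1}$, once $\delta$ is small enough to absorb higher-order corrections. Hence the iteration sends $\mathcal{S}_\delta^M$ into itself.

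For contraction in $(\mathcal{W}_\delta^M,d)$, I would subtract the integral representations for two iterates built from $\mathbf{F}^n,\mathbf{G}^n\in\mathcal{S}_\delta^M$ and estimate separately the differences of the frozen sources and the differences of the two foot-points $r_i^n(0;t,r)$; both are controlled by $Cs\cdot d(\mathbf{F}^n,\mathbf{G}^n)$ via (P3) and the smoothness of $b_i,\lambda_i$, so integration in $s$ produces
\[
d(\mathbf{F}^{n+1},\mathbf{G}^{n+1})\leq C_1\delta\,d(\mathbf{F}^n,\mathbf{G}^n)
\]
with $C_1$ independent of $\delta$. Choosing $\delta$ so that $C_1\delta<1$ and invoking Banach's fixed point theorem in the complete space $(\mathcal{W}_\delta^M,d)$ yields a unique limit $\mathbf{F}^*\in\mathcal{W}_\delta^M$.

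The main obstacle is that $\mathcal{S}_\delta^M$ is not closed in $(\mathcal{W}_\delta^M,d)$, so a priori $\mathbf{F}^*$ is only continuous and merely inherits (P1)--(P2). To upgrade to a classical solution in $\mathcal{S}_\delta^M$, I would exploit the uniform bounds (P3)--(P4) along the sequence $\{\mathbf{F}^n\}$: these guarantee equicontinuity of $\{\partial_r\mathbf{F}^n\}$, so Arzel\`a--Ascoli extracts a subsequence $\partial_r\mathbf{F}^{n_k}\to G^*$ uniformly. Combined with the $d$-convergence $\mathbf{F}^{n_k}\to\mathbf{F}^*$, this forces $G^*=\partial_r\mathbf{F}^*$ and transfers (P3)--(P4) to $\mathbf{F}^*$. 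Finally, passing to the limit in the linearized equations recovers $\partial_t\mathbf{F}^*$ and verifies \eqref{4.2} classically, completing the proof.
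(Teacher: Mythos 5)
Your overall strategy matches the paper's: Picard iteration with frozen coefficients along characteristics, estimates in the weighted metric space $(\mathcal{W}_\delta^M,d)$, a contraction argument, and Arzel\`a--Ascoli to handle the non-closedness of $\mathcal{S}_\delta^M$. However, your contraction estimate contains a genuine error that deserves attention.

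You claim that the source differences are controlled by $Cs\cdot d(\mathbf{F}^n,\mathbf{G}^n)$ and that this yields $d(\mathbf{F}^{n+1},\mathbf{G}^{n+1})\leq C_1\delta\,d(\mathbf{F}^n,\mathbf{G}^n)$ with $C_1\delta<1$ for small $\delta$. This is not correct. For the dominant term $\frac{(u-\hat u)+(v-\hat v)}{2s}$, property (P2) gives $|u-\hat u|+|v-\hat v|\leq s^2\,d$, hence the integrand is bounded by $\frac{s}{2}d$; integrating from $0$ to $\xi$ and dividing by $\xi^2$ produces $\frac{1}{4}d$ for each of the $U$- and $V$-components, summing to $\frac{1}{2}d$. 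This $\frac{1}{2}$ is a \emph{structural} constant that cannot be made small by shrinking $\delta$. The iteration contracts not because the constant is $O(\delta)$, but because the factor of two in the denominator of the singular source $\frac{U+V}{2t}$ makes the leading coefficient exactly $\frac{1}{2}<1$, with only the remainder terms being absorbed by small $\delta$. Were the source $\frac{U+V}{t}$ (without the two), the leading contraction coefficient would be $1$ and the argument would collapse; your framing would not detect this failure. Similarly, your bound $|U^{n+1}|\leq\tfrac12(M+C_0)t^2$ per component is too crude to close (P2), since $U$ and $V$ together would already saturate the budget $M$; the correct per-component coefficient for the $M$-contribution is $\tfrac14$, coming from $\int_0^\xi\tfrac{M}{2}s\,ds=\tfrac{M}{4}\xi^2$. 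You should also note that $b_1$, $b_2$ themselves contain the singular factor $\frac{U+V}{2t}$, so their estimation is not a mere Taylor expansion at the origin but again invokes the admissibility bound to tame that quotient, exactly as the paper does in its detailed expansion of $b_1$ into $I_1,I_2,I_3$.
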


\subsection{The proof of Theorem \ref{thm3}}

We establish Theorem \ref{thm3} by using the fixed point method. The proof is divided into four steps. In Step 1, we employ system \eqref{4.2} to construct an integration iteration mapping in the function class $\mathcal{S}_{\delta}^M$. In Step 2, we establish a series of {\em a priori} estimates for $b_i$ and $\lambda_i \ (i=1,2,3)$. In Step 3, we use the above estimates to demonstrate the mapping is a contraction, which implies that the iteration sequence converge to a  vector function in the limit. Finally, in Step 4, we show that this limit vector function also belongs to $\mathcal{S}_{\delta}^M$.
\vspace{0.2cm}

\noindent {\bf Step 1 (The iteration mapping).} Let vector function $(u,v,w)^T(t,r)\in \mathcal{S}_{\delta}^M$.
Denote
\begin{align}\label{4.7}
\fr{{\rm d}}{{\rm d}_1t}:=\pa_t+\lambda_{1}(t,r)\pa_r, \quad  \fr{{\rm d}}{{\rm d}_2t}:=\pa_t+\lambda_{2}(t,r)\pa_r, \quad \fr{{\rm d}}{{\rm d}_3t}:=\pa_t+\lambda_{3}(t,r)\pa_r,
\end{align}
where $\lambda_i (i=1,2,3)$ are defined as in \eqref{4.4} but with $u,v,w$ replacing $U,V,W$ respectively. Then we consider the equations
\begin{align}\label{4.8}
\left\{
\begin{array}{l}
\fr{{\rm d}}{{\rm d}_1t}U=\fr{u+v}{2t}+b_1(u,v,w,t,r), \\
\fr{{\rm d}}{{\rm d}_2t}V=\fr{u+v}{2t}+b_2(u,v,w,t,r), \\
\fr{{\rm d}}{{\rm d}_3t}W=b_3(u,v,t,r).
\end{array}
\right.
\end{align}
The integral form of \eqref{4.8} is
\begin{align}\label{4.9}
\left\{
\begin{array}{l}
U(\xi,\eta)=\displaystyle\int_{0}^\xi \bigg(\fr{u+v}{2t}+\widetilde{b}_1\bigg)(t,r_1(t;\xi,\eta))\ {\rm d}t, \\[10pt]
V(\xi,\eta)=\displaystyle\int_{0}^\xi \bigg(\fr{u+v}{2t}+\widetilde{b}_2\bigg)(t,r_2(t;\xi,\eta))\ {\rm d}t, \\[10pt]
W(\xi,\eta)=\displaystyle\int_{0}^\xi \widetilde{b}_3(t,r_3(t;\xi,\eta))\ {\rm d}t,
\end{array}
\right.
\end{align}
where $r_i(t;\xi,\eta)$ are defined as in \eqref{4.5} and
$$
\widetilde{b}_i(t,r_i(t;\xi,\eta)) =b_i(u(t,r_i(t;\xi,\eta)),v(t,r_i(t;\xi,\eta)),w(t,r_i(t;\xi,\eta)),t,r_i(t;\xi,\eta))
$$
for $i=1,2,3$.
We note that \eqref{4.9} determines an iteration mapping $\mathcal{T}$:
\begin{align}\label{4.10}
\mathcal{T}
\left(
\begin{array}{l}
u \\
v \\
w
\end{array}
\right)=
\left(
\begin{array}{l}
U \\
V \\
W
\end{array}
\right).
\end{align}
It is clear that the existence of classical solutions for the boundary problem \eqref{4.2} \eqref{4.3} is equivalent to the existence of fixed point for the mapping $\mathcal{T}$ in the function class $\mathcal{S}_{\delta}^M$.
\vspace{0.2cm}

\noindent {\bf Step 2 (A priori estimates).}  For further convenience, we hereinafter derive a series of estimates about $b_i$ and $\lambda_i$ (i=1,2,3). We use $K>1$ to denote a constant depending only on $\eps_0$, $\kappa$, the bounds of $F, G$ and the $C^3$ norms of $a_0,a_1,H_0$, which may change from one expression to another. Since $(u,v,w)^T\in \mathcal{S}_{\delta}^M$, by \eqref{3.15}, there exists a small constant $\delta_0<1$ such that for $t\in[0,\delta_0]$, $F\geq1/2$ and
\begin{align}\label{4.11}
\begin{array}{l}
|F[v-Gw+\psi]|\geq|F|\cdot[|a_0-GH_0|-(|v|+|Gw|+|a_1|t)] \\
\qquad  \qquad  \qquad  \qquad \  \geq |F|\cdot[\eps_0-(Mt^2+GMt^2+Kt)]\geq\fr{\eps_0}{4}, \\
|F[u+Gw+\phi]|\geq|F|\cdot[|-a_0+GH_0|-(|u|+|Gw|+|a_1|t)] \\
\qquad  \qquad  \qquad  \qquad \  \geq |F|\cdot[\eps_0-(Mt^2+GMt^2+Kt)]\geq\fr{\eps_0}{4}, \\
|F[u+v+2a_1t]|\geq 2|F|(|a_1|-Mt)t\geq \fr{\eps_0}{2}t.
\end{array}
\end{align}
Moreover, there holds
\begin{align}\label{4.12}
|u|+|v|+|w|\leq Mt^2,\quad |u_r|+|v_r|+|w_r|\leq Mt^2,\quad |u_{rr}|+|v_{rr}|+|w_{rr}|\leq Mt^2.
\end{align}
\vspace{0.2cm}

For simplicity, denote $b_1$ as
\begin{align}\label{4.13}
b_1(u,v,w,t,r)=-\bigg(\fr{u+v}{2t}+a_1\bigg)\fr{I_1}{F}+\fr{t^2\sqrt{1-t^2}I_2}{F}+\fr{I_3 t}{F},
\end{align}
where $I_i$, $i=1,2,3$, denote
\begin{align*}
I_1&=\fr{(\kappa+1)(u+v+2a_1t)}{v-Gw+\psi}-\fr{(\kappa+2-t^2)(v+a_0+a_1t)
-(\kappa+1-t^2)G(w+H_0)}{v-Gw+\psi}t^2
 \nonumber \\[4pt]
I_2&=\fr{(-a_{0}'+a_{1}'t)(v+a_0+a_1t)}{v-Gw+\psi}
 \nonumber \\[4pt]
I_3&=\fr{(\kappa+2-2t^2)(u-a_0+a_1t)+(\kappa+1-t^2)G(w+H_0)}{v-Gw+\psi}(v+a_0+a_1t).
\end{align*}
According to \eqref{4.11} and \eqref{4.12}, we have
\begin{align}\label{4.14}
\begin{array}{l}
|I_1|\leq K(Mt^2+Kt)+K(Mt^2+K+Kt)t^2\leq K(1+Mt)t, \\
|I_2|\leq K(K+Kt)(Mt^2+K+Kt)\leq K(1+Mt),\\
|I_3|\leq K(Mt^2+K+Kt)(Mt^2+K+Kt)\leq K(1+Mt)^2.
\end{array}
\end{align}
Putting \eqref{4.14} into \eqref{4.13} yields
\begin{align}\label{4.15}
\begin{array}{l}
|b_1|\leq K(Mt+K)|I_1|+K|I_2|t^2+K|I_3|t\leq K(1+Mt)^2t.
\end{array}
\end{align}
Differentiation $b_1$ with respect to $r$, it renders
\begin{align*}
\fr{\pa b_1}{\pa r}=-\bigg(\fr{u_r+v_r}{2t}+a'_{1}\bigg)\fr{I_1}{F} -\bigg(\fr{u+v}{2t}+a_{1}\bigg)\fr{\pa_rI_1}{F} +\fr{t^2\sqrt{1-t^2}}{F}\pa_rI_2+\fr{t}{F}\pa_rI_3,
\end{align*}
and subsequently
\begin{align*}
\fr{\pa^2 b_1}{\pa r^2}=&-\bigg(\fr{u_{rr}+v_{rr}}{2t}+a''_{1}\bigg)\fr{I_1}{F} -\bigg(\fr{u+v}{2t}+a_{1}\bigg)\fr{\pa_{rr}I_1}{F} \\[4pt]
&\ -2\bigg(\fr{u_{r}+v_{r}}{2t}+a'_{1}\bigg)\fr{\pa_rI_1}{F}+\fr{t^2\sqrt{1-t^2}}{F}\pa_{rr}I_2+\fr{t}{F}\pa_{rr}I_3,
\end{align*}
Applying \eqref{4.11}-\eqref{4.12} and \eqref{4.14}, one obtains
\begin{align}\label{4.16}
\bigg|\fr{\pa b_1}{\pa r}\bigg|\leq K(1+Mt)^2t+K(1+Mt)|\pa_rI_1|+K|\pa_rI_2|t^2+K|\pa_rI_3|t,
\end{align}
and
\begin{align}\label{4.17}
\bigg|\fr{\pa^2 b_1}{\pa r^2}\bigg|\leq &K(1+Mt)^2t+K(1+Mt)|\pa_{rr}I_1| \nonumber \\
 &+K(1+Mt)|\pa_{r}I_1| +K|\pa_{rr}I_2|t^2+K|\pa_{rr}I_3|t.
\end{align}
By direct calculation and simplification, we derive
\begin{align}\label{4.18}
\pa_rI_1=&\fr{(\kappa+1)(u_r+v_r+2a'_1t)}{v-Gw+\psi} -\fr{(\kappa+1)(u+v+2a_1t)(v_r-Gw_r+\psi_r)}{(v-Gw+\psi)^2}\nonumber \\[4pt] &\ -\fr{(\kappa+2-t^2)(v_r+a'_0+a'_1t)-(\kappa+1-t^2)G(w_r+H'_0)}{v-Gw+\psi}t^2 \nonumber \\[4pt] &\
+\fr{(\kappa+2-t^2)(v+a_0+a_1t)-(\kappa+1-t^2)G(w+H_0)}{(v-Gw+\psi)^2}(v_r-Gw_r+\psi_r)t^2,
\end{align}
\begin{align}\label{4.19}
\pa_rI_2=&\fr{(-a''_0+a''_1t)(v+a_0+a_1t)}{v-Gw+\psi} +\fr{(-a'_0+a'_1t)(v_r+a'_0+a'_1t)}{v-Gw+\psi} \nonumber \\[4pt] &\ -\fr{(-a'_0+a'_1t)(v+a_0+a_1t)(v_r-Gw_r+\psi_r)}{(v-Gw+\psi)^2},
\end{align}
and
\begin{align}\label{4.20}
\pa_rI_3&=\fr{(\kappa+2-2t^2)(u_r-a'_0+a'_1t)+(\kappa+1-t^2)G(w_r+H'_0)}{v-Gw+\psi}(v+a_0+a_1t)  \nonumber \\[4pt] &\ -\fr{(\kappa+2-2t^2)(u-a_0+a_1t)+(\kappa+1-t^2)G(w+H_0)}{(v-Gw+\psi)^2}(v+a_0+a_1t)(v_r-Gw_r+\psi_r) \nonumber \\[4pt] &\ +\fr{(\kappa+2-2t^2)(u-a_0+a_1t)+(\kappa+1-t^2)G(w+H_0)}{v-Gw+\psi}(v_r+a'_0+a'_1t).
\end{align}
Moreover, one has
\begin{align}\label{4.21}
\pa_{rr}I_1=\fr{I_{11}}{v-Gw+\psi}+\fr{I_{12}}{(v-Gw+\psi)^2}+\fr{I_{13}}{(v-Gw+\psi)^3},
\end{align}
where $I_{1i}$, $i=1,2,3$, denote
\begin{align*}
I_{11}=&(\kappa+1)(u_{rr}+v_{rr}+2a''_1t)-(\kappa+2-t^2)(v_{rr}+a''_0+a''_1t)t^2 \\
 & +(\kappa+1-t^2)G(w_{rr}+H''_0)t^2, \\
I_{12}=&-2(\kappa+1)(u_r+v_r+2a'_1t)(v_r-Gw_r+\psi_r) \\
& -(\kappa+1)(u+v+2a_1t)(v_{rr}-Gw_{rr}+\psi_{rr}) \\
& +2[(\kappa+2-t^2)(v_r+a'_0+a'_1t)-(\kappa+1-t^2)G(w_r+H'_0)](v_r-Gw_r+\psi_r)t^2 \\
& +[(\kappa+2-t^2)(v+a_0+a_1t)-(\kappa+1-t^2)G(w+H_0)](v_{rr}-Gw_{rr}+\psi_{rr})t^2, \\
I_{13}=&2(\kappa+1)(u+v+2a_1t)(v_r-Gw_r+\psi_r)^2 \\ &-2[(\kappa+2-t^2)(v+a_0+a_1t)-(\kappa+1-t^2)G(w+H_0)](v_{r}-Gw_{r}+\psi_{r})^2t^2,
\end{align*}
\begin{align}\label{4.22}
\pa_{rr}I_2=\fr{I_{21}}{v-Gw+\psi}+\fr{I_{22}}{(v-Gw+\psi)^2}+\fr{I_{23}}{(v-Gw+\psi)^3},
\end{align}
where $I_{2i}$, $i=1,2,3$, denote
\begin{align*}
I_{21}=&(-a'''_0+a'''_1t)(v+a_0+a_1t)+2(-a''_0+a''_1t)(v_r+a'_0+a'_1t) \\
& +(-a'_0+a'_1t)(v_{rr}+a''_0+a''_1t), \\
I_{22}=&-2(-a''_0+a''_1t)(v+a_0+a_1t)(v_r-Gw_r+\psi_r) \\
& -2(-a'_0+a'_1t)(v_r+a'_0+a'_1t)(v_r-Gw_r+\psi_r) \\
&-(-a'_0+a'_1t)(v+a_0+a_1t)(v_{rr}-Gw_{rr}+\psi_{rr}), \\
I_{23}=&2(-a'_0+a'_1t)(v+a_0+a_1t)(v_r-Gw_r+\psi_r)^2,
\end{align*}
and
\begin{align}\label{4.23}
\pa_{rr}I_3=\fr{I_{31}}{v-Gw+\psi}+\fr{I_{32}}{(v-Gw+\psi)^2}+\fr{I_{33}}{(v-Gw+\psi)^3},
\end{align}
where $I_{3i}$, $i=1,2,3$, denote
\begin{align*}
I_{31}=&[(\kappa+2-2t^2)(u_{rr}-a''_0+a''_1t)+(\kappa+1-t^2)G(w_{rr}+H''_0)](v+a_0+a_1t) \\ &+2[(\kappa+2-2t^2)(u_r-a'_0+a'_1t)+(\kappa+1-t^2)G(w_r+H'_0)](v_r+a'_0+a'_1t) \\
&+[(\kappa+2-2t^2)(u-a_0+a_1t)+(\kappa+1-t^2)G(w+H_0)](v_{rr}+a''_0+a''_1t), \\
I_{32}=&-2[(\kappa+2-2t^2)(u_r-a'_0+a'_1t)+(\kappa+1-t^2)G(w_r+H'_0)]\\
&\quad \ \times(v+a_0+a_1t)(v_r-Gw_r+\psi_r) \\ &-[(\kappa+2-2t^2)(u-a_0+a_1t)+(\kappa+1-t^2)G(w+H_0)]\\
&\quad  \times[(v+a_0+a_1t)(v_{rr}-Gw_{rr}+\psi_{rr}) +2(v_r+a'_0+a'_1t)(v_r-Gw_r+\psi_r)], \\
I_{33}=&2[(\kappa+2-2t^2)(u-a_0+a_1t)+(\kappa+1-t^2)G(w+H_0)]\\
&\quad  \times(v+a_0+a_1t)(v_r-Gw_r+\psi_r)^2.
\end{align*}

Making use of \eqref{4.11} and \eqref{4.12}, we have estimates  from \eqref{4.18}-\eqref{4.20},
\begin{align}\label{4.24}
|\pa_rI_1|\leq &K(Mt^2+Kt)+K(Mt^2+Kt)(Mt^2+K)\nonumber \\
&+K[K(Mt^2+K+Kt)+K(Mt^2+K)]t^2 \nonumber \\
&\ +K[K(Mt^2+K+Kt)+K(Mt^2+K)](Mt^2+K)t^2 \nonumber \\
\leq & K(1+Mt)^2 t,
\end{align}
\begin{align}\label{4.25}
|\pa_rI_2|\leq &K(K+Kt)(Mt^2+K+Kt) +K(K+Kt)(Mt^2+K+Kt) \nonumber \\
&+K(K+Kt)(Mt^2+K+Kt)(Mt^2+K)\nonumber \\
\leq &K(1+Mt)^2,
\end{align}
and
\begin{align}\label{4.26}
|\pa_rI_3|\leq &K[K(Mt^2+K+Kt)+K(Mt^2+K)](Mt^2+K+Kt) \nonumber \\
&+K[K(Mt^2+K+Kt)+K(Mt^2+K)](Mt^2+K+Kt)(Mt^2+K) \nonumber \\
&+K[K(Mt^2+K+Kt)+K(Mt^2+K)](Mt^2+K+Kt)  \nonumber \\
\leq &K(1+Mt)^3.
\end{align}
Combining \eqref{4.16} and \eqref{4.24}-\eqref{4.26} suggests
\begin{align}\label{4.27}
\bigg|\fr{\pa b_1}{\pa r}\bigg|\leq &K(1+Mt)^2t+K(1+Mt)^3t+K(1+Mt)^2t^2+K(1+Mt)^3t \nonumber \\
\leq &K(1+Mt)^3t.
\end{align}
Furthermore, it is easy to  obtain the following estimates about $I_{ij}, (i,j=1,2,3)$
\begin{align}\label{4.28}
\left\{
\begin{array}{l}
|I_{11}|\leq K(1+Mt)t,\\
|I_{12}|\leq K(1+Mt)^2t,\\
|I_{13}|\leq K(1+Mt)^3t,\\
\end{array}
\right.\quad
\left\{
\begin{array}{l}
|I_{21}|\leq K(1+Mt),\\
|I_{22}|\leq K(1+Mt)^2,\\
|I_{23}|\leq K(1+Mt)^3,\\
\end{array}
\right.\quad
\left\{
\begin{array}{l}
|I_{31}|\leq K(1+Mt)^2,\\
|I_{32}|\leq K(1+Mt)^3,\\
|I_{33}|\leq K(1+Mt)^4.\\
\end{array}
\right.
\end{align}
Therefore, one obtains by \eqref{4.21}-\eqref{4.23} and \eqref{4.11},
$$
|\pa_{rr}I_1|\leq K(1+Mt)^3t,\quad  |\pa_{rr}I_2|\leq K(1+Mt)^3,\quad |\pa_{rr}I_1|\leq K(1+Mt)^4.
$$
We substitute the above into \eqref{4.17} and employ \eqref{4.24} to achieve
\begin{align}\label{4.29}
\bigg|\fr{\pa^2 b_1}{\pa r^2}\bigg|\leq &K(1+Mt)^2t+K(1+Mt)^4t \nonumber \\
 &+K(1+Mt)^3t +K(1+Mt)^3t^2+K(1+Mt)^4t \nonumber \\
 \leq&K(1+Mt)^4t.
\end{align}
Similar arguments apply for $b_2$ yield
\begin{align}\label{4.30}
|b_2|\leq K(1+Mt)^2t,\quad
\bigg|\fr{\pa b_2}{\pa r}\bigg|\leq K(1+Mt)^3t, \quad
\bigg|\fr{\pa^2 b_2}{\pa r^2}\bigg|\leq K(1+Mt)^4t.
\end{align}
Now we estimate $b_3, \pa_rb_3$ and $\pa_{rr}b_3$. Due to \eqref{4.11} and \eqref{4.12}, it is easily seen that there holds
\begin{align}\label{4.31}
|b_3|\leq\fr{Kt^2(Mt^2+K)}{\fr{1}{2}\eps_0t}\leq K(1+Mt)t.
\end{align}
We directly compute
\begin{align}\label{4.32}
\fr{\pa b_3}{\pa r}=-\fr{t^2\sqrt{1-t^2}}{F(t)}\bigg\{&\fr{(u_r-v_r-2a'_0)H'_0+(u-v-2a_0)H''_0}{u+v+2a_1t}
\nonumber \\[4pt]
&\quad \ -\fr{(u-v-2a_0)(u_r+v_r+2a'_1t)H'_0}{(u+v+2a_1t)^2}\bigg\},
\end{align}
and
\begin{align}\label{4.33}
\fr{\pa^2 b_3}{\pa r^2}=-\fr{t^2\sqrt{1-t^2}}{F(t)}\bigg\{&\fr{(u_{rr}-v_{rr}-2a''_0)H'_0+2(u_r-v_r-2a'_0)H''_0 +(u-v-2a_0)H'''_0}{u+v+2a_1t}
\nonumber \\[4pt]
&\ \ -\fr{2[(u_r-v_r-2a'_0)H'_0+(u-v-2a_0)H''_0](u_r+v_r+2a'_1t)}{(u+v+2a_1t)^2} \nonumber \\[4pt]
&\ \ \ -\fr{(u-v-2a_0)(u_{rr}+v_{rr}+2a''_1t)H'_0}{(u+v+2a_1t)^2}
\nonumber \\[4pt]
&\ \ \ \ +\fr{2(u-v-2a_0)(u_{r}+v_{r}+2a'_1t)^2H'_0}{(u+v+2a_1t)^3}
\bigg\}.
\end{align}
Using \eqref{4.11} and \eqref{4.12} again, we obtain
\begin{align}\label{4.34}
\bigg|\fr{\pa b_3}{\pa r}\bigg| &\leq Kt^2\bigg\{\fr{K(Mt^2+K)}{\fr{1}{2}\eps_0t}+\fr{K(Mt^2+K)(Mt^2+Kt)}{\fr{1}{4}\eps^{2}_0t^2}\bigg\} \nonumber \\[4pt]
&\leq K(1+Mt)^2t,
\end{align}
and
\begin{align}\label{4.35}
\bigg|\fr{\pa^2 b_3}{\pa r^2}\bigg|\leq &Kt^2\bigg\{\fr{K(Mt^2+K)}{\fr{1}{2}\eps_0t} +\fr{K(Mt^2+K)(Mt^2+Kt)}{\fr{1}{4}\eps^{2}_0t^2} \nonumber \\[4pt]
&\qquad \quad +\fr{K(Mt^2+K)(Mt^2+Kt)}{\fr{1}{4}\eps^{2}_0t^2} +\fr{K(Mt^2+K)(Mt^2+Kt)^2}{\fr{1}{8}\eps^{3}_0t^3}\bigg\} \nonumber \\[4pt]
\leq& K(1+Mt)^3t.
\end{align}
\vspace{0.2cm}

For  further use, we make the following  estimates about $\lambda_i(u,v,w,t,r) (i=1,2,3)$. It follows from \eqref{4.11} and \eqref{4.12} that
\begin{align}\label{4.36}
|\lambda_1|&=\bigg|-\fr{t^2\sqrt{1-t^2}}{F}\cdot\fr{v+a_0+a_1t}{v-Gw+\psi}\bigg| \nonumber \\[4pt]
&\leq K(1+Mt)t^2.
\end{align}
By performing a direct calculation, we obtain
\begin{align*}
\fr{\pa\lambda_1}{\pa r}=-\fr{t^2\sqrt{1-t^2}}{F}\bigg\{\fr{v_r+a'_0+a'_1t}{v-Gw+\psi} -\fr{(v+a_0+a_1t)(v_r-Gw_r+\psi_r)}{(v-Gw+\psi)^2}\bigg\}
\end{align*}
and
\begin{align*}
\fr{\pa^2\lambda_1}{\pa r^2}=-\fr{t^2\sqrt{1-t^2}}{F}&\bigg\{\fr{v_{rr}+a''_0+a''_1t}{v-Gw+\psi} -\fr{2(v_r+a'_0+a'_1t)(v_r-Gw_r+\psi_r)}{(v-Gw+\psi)^2} \nonumber \\[4pt]
&\quad -\fr{(v+a_0+a_1t)(v_{rr}-Gw_{rr}+\psi_{rr})}{(v-Gw+\psi)^2} \nonumber \\[4pt]
&\quad \ \ +\fr{2(v+a_0+a_1t)(v_{r}-Gw_{r}+\psi_{r})^2}{(v-Gw+\psi)^3}
\bigg\}.
\end{align*}
Then we have estimates
\begin{align}\label{4.37}
\bigg|\fr{\pa\lambda_1}{\pa r}\bigg|&\leq Kt^2\bigg\{K(Mt^2+K+Kt)+K(Mt^2+K+Kt)(Mt^2+K)\bigg\} \nonumber \\[4pt]
&\leq K(1+Mt)^2t^2,
\end{align}
and
\begin{align}\label{4.38}
\bigg|\fr{\pa^2\lambda_1}{\pa r^2}\bigg|&\leq Kt^2\bigg\{K(Mt^2+K+Kt)+K(Mt^2+K+Kt)(Mt^2+K) \nonumber \\[4pt] &\qquad \qquad +K(Mt^2+K+Kt)(Mt^2+K)+K(Mt^2+K+Kt)(Mt^2+K)^2 \bigg\} \nonumber \\[4pt]
&\leq K(1+Mt)^3t^2.
\end{align}
Repetition of the same arguments for $\lambda_2$ leads to
\begin{align}\label{4.39}
|\lambda_2|\leq K(1+Mt)t^2,\quad \bigg|\fr{\pa\lambda_2}{\pa r}\bigg|\leq K(1+Mt)^2t^2, \quad \bigg|\fr{\pa^2\lambda_2}{\pa r^2}\bigg|\leq K(1+Mt)^3t^2.
\end{align}
For $\lambda_3$, one has
\begin{align}\label{4.40}
|\lambda_3|=\bigg|\fr{t^2\sqrt{1-t^2}}{F}\cdot\fr{u-v-2a_0}{u+v+2a_1t}\bigg|
\leq Kt^2\fr{Mt^2+K}{\fr{1}{2}\eps_0t}\leq K(1+Mt)t,
\end{align}
\begin{align}\label{4.41}
\bigg|\fr{\pa \lambda_3}{\pa r}\bigg|&=\fr{t^2\sqrt{1-t^2}}{F}\cdot\bigg|\fr{u_r-v_r-2a'_0}{u+v+2a_1t} -\fr{(u-v-2a_0)(u_r+v_r+2a'_1t)}{(u+v+2a_1t)^2}\bigg| \nonumber \\[4pt]
&\leq Kt^2\bigg\{\fr{Mt^2+K}{\fr{1}{2}\eps_0t} +\fr{K(Mt^2+K)(Mt^2+Kt)}{\fr{1}{4}\eps^{2}_0t^2}\bigg\} \nonumber \\[4pt]
&\leq K(1+Mt)^2t,
\end{align}
and
\begin{align}\label{4.42}
\bigg|\fr{\pa^2 \lambda_3}{\pa r^2}\bigg|&= \fr{t^2\sqrt{1-t^2}}{F}\cdot
\bigg|\fr{u_{rr}-v_{rr}-2a''_0}{u+v+2a_1t} -\fr{2(u_r-v_r-2a'_0)(u_r+v_r+2a'_1t)}{(u+v+2a_1t)^2} \nonumber \\[4pt] &\qquad  \qquad \qquad \quad -\fr{(u-v-2a_0)(u_{rr}+v_{rr}+2a''_1t)}{(u+v+2a_1t)^2} \nonumber \\[4pt] &\qquad  \qquad \qquad \qquad  +\fr{2(u-v-2a_0)(u_r+v_r+2a'_1t)^2}{(u+v+2a_1t)^3}\bigg| \nonumber \\[4pt]
&\leq Kt^2\bigg\{\fr{Mt^2+K}{\fr{1}{2}\eps_0t} +\fr{K(Mt^2+K)(Mt^2+Kt)}{\fr{1}{4}\eps^{2}_0t^2} \nonumber \\[4pt]
&\qquad \qquad + \fr{K(Mt^2+K)(Mt^2+Kt)}{\fr{1}{4}\eps^{2}_0t^2} +\fr{K(Mt^2+K)(Mt^2+Kt)^2}{\fr{1}{8}\eps^{3}_0t^3}\bigg\} \nonumber \\[4pt]
&\leq K(1+Mt)^3t.
\end{align}
Summing up \eqref{4.15}, \eqref{4.27}, \eqref{4.29}-\eqref{4.31}, \eqref{4.34}-\eqref{4.42}, we have the following a priori estimates
\begin{align}\label{4.43}
\begin{array}{l}
|b_i|\leq K(1+Mt)^2t,\quad |\fr{\pa b_i}{\pa r}|\leq K(1+Mt)^3t, \quad |\fr{\pa^2 b_i}{\pa r^2}|\leq K(1+Mt)^4t,
\\
|\lambda_i|\leq K(1+Mt)t,\quad |\fr{\pa\lambda_i}{\pa r}|\leq K(1+Mt)^2t, \quad |\fr{\pa^2\lambda_i}{\pa r^2}|\leq K(1+Mt)^3t,
\end{array}
\quad i=1,2,3.
\end{align}
\vspace{0.2cm}

\noindent{\bf Step 3 (Properties of the mapping). } We now study the properties of the mapping $\mathcal{T}$.
\begin{lem}\label{lem1}
Let the assumptions in Theorem \ref{thm3} hold. Then there exists positive constants $\delta\in(0,\delta_0), M$ and $0<\nu<1$ depending only on $\eps_0, \kappa$, the bounds of $F, G$ and the $C^3$ norms of $a_0, a_1, H_0$ such that

\noindent (1) $\mathcal{T}$ maps $\mathcal{S}^{M}_\delta$ into $\mathcal{S}^{M}_\delta$;

\noindent (2) For any vector functions $\mathbf{F}, \widehat{\mathbf{F}}$ in $\mathcal{S}^{M}_\delta$, there holds
\begin{align}\label{4.44}
d\bigg(\mathcal{T}(\mathbf{F}),\mathcal{T}(\widehat{\mathbf{F}})\bigg)\leq\nu d(\mathbf{F},\widehat{\mathbf{F}}).
\end{align}
\end{lem}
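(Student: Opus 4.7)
\textbf{Proof plan for Lemma \ref{lem1}.} The argument is a Banach fixed-point argument in the weighted metric space $(\mathcal{S}_\delta^M, d)$, organized around the observation that although the source $(u+v)/(2t)$ in \eqref{4.8} has a singular factor $1/t$, the admissibility constraint (P2) forces $u+v = O(t^2)$, so the source is in fact $O(t)$ and every integral against it converges. The four-step outline is: (i) derive integral representations for $\partial_r U, \partial_{rr} U$ and the analogues for $V, W$ by differentiating \eqref{4.9} in $\eta$; (ii) control the sensitivities $\partial r_i/\partial \eta$ and $\partial^2 r_i/\partial \eta^2$ by Gronwall applied to their variational equations, whose coefficients $\partial_r \lambda_i, \partial_{rr}\lambda_i = O((1+M\delta)^k t)$ are already bounded by \eqref{4.43}; (iii) verify (P1)--(P4) for $\mathcal{T}(u,v,w)$; (iv) estimate $d(\mathcal{T}\mathbf{F}, \mathcal{T}\widehat{\mathbf{F}})$.

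For part (1), the condition (P1) at $t=0$ is immediate from \eqref{4.9}, while $U_t(0,r) = 0$ follows by reverting to the differential form \eqref{4.8} and noting $(u+v)/(2t) \to 0$, $b_1 \to 0$, $\lambda_1 U_r \to 0$ as $t \to 0$ by \eqref{4.43} combined with (P2)--(P3). For (P2) the direct estimate
\begin{equation*}
|U(\xi,\eta)| \le \int_0^\xi \Bigl[\tfrac{|u+v|}{2t} + |b_1|\Bigr]\,dt \le \tfrac{1}{2}\bigl[M + K(1+M\delta)^2\bigr]\xi^2
\end{equation*}
gives $\|U/t^2\|_\infty \le M$ once $M$ is chosen large compared with $K$ and $\delta$ small enough that $(1+M\delta)^2 \le 2$. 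The derivative bounds (P3)--(P4) follow by the same scheme after differentiating \eqref{4.9} in $\eta$ once and twice, using the bounds on $\partial_r b_i, \partial_{rr} b_i$ from \eqref{4.43} together with the Gronwall estimates on $\partial_\eta r_i, \partial_{\eta\eta} r_i$. Lipschitz continuity of $\partial_r U$ in $r$ follows from the pointwise bound on $\partial_{rr} U$.

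For part (2), fix $\mathbf{F}, \widehat{\mathbf{F}} \in \mathcal{S}_\delta^M$ and decompose
\begin{equation*}
U - \widehat U = \int_0^\xi \Bigl[\tfrac{(u+v)-(\hat u+\hat v)}{2t} + (b_1 - \hat b_1)\Bigr](t, r_1)\,dt + \int_0^\xi \bigl[\hat g(t, r_1) - \hat g(t, \hat r_1)\bigr]\,dt,
\end{equation*}
with $\hat g = (\hat u + \hat v)/(2t) + \hat b_1$. The singular part of the first integrand is bounded pointwise by $t(d_1+d_2)/2$ using $|u - \hat u| \le t^2 d_1$, contributing $(d_1+d_2)/4$ to $\|(U-\widehat U)/t^2\|_\infty$. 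The $b_1 - \hat b_1$ contribution is $O(t^2 d)$ because the apparent $1/t$ in $\partial_u b_1$ is cancelled by the factor $I_1 = O(t)$, yielding an $O(\delta) d$ contribution. The shift term is controlled by Gronwall on the ODE for $r_1 - \hat r_1$ via $|\lambda_1 - \hat \lambda_1| = O(t^4 d)$ and $|\partial_r \lambda_1| = O(t^2)$, giving $|r_1 - \hat r_1| = O(\delta^5 d)$ and a negligible contribution. Symmetric estimates for $V-\widehat V$ (another $(d_1+d_2)/4$ from its singular source) and $W - \widehat W$ (no singular source, hence only $O(\delta) d$) sum to
\begin{equation*}
d(\mathcal{T}\mathbf{F}, \mathcal{T}\widehat{\mathbf{F}}) \le \bigl[\tfrac{1}{2} + O(\delta)\bigr]\,d(\mathbf{F}, \widehat{\mathbf{F}}),
\end{equation*}
which is a strict contraction once $\delta$ is further shrunk.

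The main obstacle is the singular source $(u+v)/(2t)$: its contribution to the contraction constant equals $1/2$, strictly less than $1$ but not shrinkable with $\delta$, so the weighted metric $\|\cdot/t^2\|_\infty$ is essential — the factor $t^2$ converts the second-order vanishing of $u, v$ at the sonic line into cancellation of the singularity, leaving a tame $O(t)$ integrand. A secondary technical point is the bookkeeping needed to verify that every apparent $1/t$ factor inside $\partial b_i/\partial u$ and inside $\lambda_i$ is indeed cancelled by compensating $O(t)$ factors coming from $I_1, I_2, I_3$ and the prefactor $t^2\sqrt{1-t^2}$, so that the Lipschitz constants of $b_i, \lambda_i$ in $(u,v,w)$ remain uniformly bounded — this is what allows the $b_i$-difference contribution to the contraction to be $O(\delta) d$ rather than $O(1) d$.
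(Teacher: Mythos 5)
Your proposal is correct and follows essentially the same route as the paper: iteration on the integral form \eqref{4.9}, the a priori bounds \eqref{4.43} on $b_i$, $\lambda_i$ and their $r$-derivatives, Gronwall control of $\partial_\eta r_i$ and $\partial_{\eta\eta} r_i$ for the (P2)--(P4) bounds, and a contraction estimate in which the singular source $(u+v)/(2t)$ contributes exactly $1/2$ and everything else is $O(\delta)$. The only variation is cosmetic: in the contraction step the paper keeps both integrands on the characteristic $r_1$ of $\mathbf{F}$ and absorbs the mismatch into $I_6=(\lambda_1-\widehat{\lambda}_1)\widehat{U}_r$ (which requires $\widehat{U}\in\mathcal{S}_\delta^M$ from part~(1)), whereas you split off an explicit characteristic-shift term $\int[\widehat{g}(t,r_1)-\widehat{g}(t,\widehat{r}_1)]\,dt$ and bound it by Gronwall on $|r_1-\widehat{r}_1|$ together with the Lipschitz bound $\|\partial_r\widehat{g}\|=O(Mt)$ coming from (P3) applied to the inputs; both give the same $O(\delta)d$ remainder and hence $\nu=\tfrac12+O(\delta)<1$.
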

\begin{proof}
Assume $\mathbf{F}=(u,v,w)^T$ and $\widehat{\mathbf{F}}=(\widehat{u},\widehat{v}, \widehat{w})^T$ are two elements in $\mathcal{S}^{M}_\delta$, where the constants $\delta$ and $M$ will be determined later. Denote $\mathbf{G}=\mathcal{T}(\mathbf{F})=(U,V,W)^T$ and $\widehat{\mathbf{G}}=\mathcal{T}(\widehat{\mathbf{F}})=(\widehat{U},\widehat{V},\widehat{W})^T$. It is easy to check by \eqref{4.9} that $U(0,\eta)=V(0,\eta)=W(0,\eta)=0$.
\vspace{0.2cm}

According to \eqref{4.9} and \eqref{4.43}, it provides that for $t\leq\delta$
\begin{align}\label{4.45}
|U(\xi,\eta)|&=\bigg|\int_{0}^\xi\bigg(\fr{u+v}{2t}+\widetilde{b}_1\bigg)\ {\rm d}t\bigg| \nonumber \\[4pt]
&\leq\int_{0}^\xi\bigg(\fr{|u|+|v|}{2t}+|\widetilde{b}_1|\bigg)\ {\rm d}t \nonumber \\[4pt]
&\leq\int_{0}^\xi\bigg(\fr{M}{2}t+K(1+M\delta)^2t\bigg)\ {\rm d}t\leq \bigg(\fr{M}{4}+K(1+M\delta)^2\bigg)\xi^2.
\end{align}
Similarly, one has
\begin{align}\label{4.46}
|V(\xi,\eta)|\leq \bigg(\fr{M}{4}+K(1+M\delta)^2\bigg)\xi^2,\quad  |W(\xi,\eta)|\leq K(1+M\delta)^2\xi^2.
\end{align}
Combining \eqref{4.45} and \eqref{4.46} yields
\begin{align}\label{4.47}
\bigg|\fr{U(\xi,\eta)}{\xi^2}\bigg|+\bigg|\fr{V(\xi,\eta)}{\xi^2}\bigg| +\bigg|\fr{W(\xi,\eta)}{\xi^2}\bigg|\leq\fr{M}{2}+K(1+M\delta)^2.
\end{align}
To establish the bound of $U_\eta/\xi^2$, we differentiate $U(\xi,\eta)$ with respect to $\eta$
\begin{align}\label{4.48}
\fr{\pa U}{\pa \eta}(\xi,\eta)=\int_{0}^\xi\bigg(\fr{u_r+v_r}{2t}+\fr{\pa b_1}{\pa r}\bigg)\cdot\fr{\pa r_1}{\pa \eta}\ {\rm d}t,
\end{align}
where
$$
\fr{\pa r_1}{\pa \eta}(t;\xi,\eta)=\exp\bigg(\int_{\xi}^t\fr{\pa \lambda_1}{\pa r}(\tau,r_1(\tau;\xi,\eta))\ {\rm d}\tau\bigg).
$$
We use  \eqref{4.43} to estimate
\begin{align}\label{4.49}
\bigg|\fr{\pa r_1}{\pa \eta}\bigg|\leq\exp\bigg(\int_{0}^t\bigg|\fr{\pa \lambda_1}{\pa r}\bigg|\ {\rm d}\tau\bigg) \leq\exp\bigg(\int_{0}^\delta K(1+M\delta)^2\tau\ {\rm d}\tau\bigg)\leq e^{K(1+M\delta)^2\delta^2}.
\end{align}
Employing \eqref{4.43} again, we obtain
\begin{align}\label{4.50}
\bigg|\fr{\pa U}{\pa \eta}(\xi,\eta)\bigg|&\leq\int_{0}^\xi\bigg(\fr{|u_r|+|v_r|}{2t}+\bigg|\fr{\pa b_1}{\pa r}\bigg|\bigg)\cdot\bigg|\fr{\pa r_1}{\pa \eta}\bigg|\ {\rm d}t \nonumber \\[4pt]
&\leq \int_{0}^\xi\bigg(\fr{M}{2}t+K(1+M\delta)^3t\bigg)e^{K(1+M\delta)^2\delta^2}\ {\rm d}t
\nonumber \\[4pt]
&\leq \xi^2\bigg(\fr{M}{4}+K(1+M\delta)^3\bigg)e^{K(1+M\delta)^2\delta^2}.
\end{align}
In a similar way, we also obtain
\begin{align*}
\bigg|\fr{\pa V}{\pa \eta}(\xi,\eta)\bigg|&\leq \xi^2\bigg(\fr{M}{4}+K(1+M\delta)^3\bigg)e^{K(1+M\delta)^2\delta^2}, \\[4pt]
\bigg|\fr{\pa W}{\pa \eta}(\xi,\eta)\bigg|&\leq \xi^2 K(1+M\delta)^3e^{K(1+M\delta)^2\delta^2},
\end{align*}
which, together with \eqref{4.50}, provides
\begin{align}\label{4.51}
\bigg|\fr{ U_\eta(\xi,\eta)}{\xi^2}\bigg| +\bigg|\fr{ V_\eta(\xi,\eta)}{\xi^2}\bigg| +\bigg|\fr{ W_\eta(\xi,\eta)}{\xi^2}\bigg| \leq \bigg(\fr{M}{2}+K(1+M\delta)^3\bigg)e^{K(1+M\delta)^2\delta^2}.
\end{align}
Now, differentiating \eqref{4.48} with respect to $\eta$ gives
\begin{align}\label{4.52}
\fr{\pa^2 U}{\pa \eta^2}(\xi,\eta)=\int_{0}^\xi\bigg\{\bigg(\fr{u_{rr}+v_{rr}}{2t}+\fr{\pa^2 b_1}{\pa r^2}\bigg)\cdot\bigg(\fr{\pa r_1}{\pa \eta}\bigg)^2 +\bigg(\fr{u_{r}+v_{r}}{2t}+\fr{\pa b_1}{\pa r}\bigg)\cdot\fr{\pa^2 r_1}{\pa \eta^2}\bigg\} \ {\rm d}t,
\end{align}
where
\begin{align*}
\fr{\pa^2 r_1}{\pa \eta^2}(t;\xi,\eta)=&\exp\bigg(\int_{\xi}^t\fr{\pa \lambda_1}{\pa r}(\tau,r_1(\tau;\xi,\eta))\ {\rm d}\tau\bigg) \\[4pt]
&\quad \times
\int_{\xi}^t\fr{\pa^2 \lambda_1}{\pa r^2}\cdot\fr{\pa r_1}{\pa \eta}(\tau,r_1(\tau;\xi,\eta))\ {\rm d}\tau.
\end{align*}
We use \eqref{4.43} and recall \eqref{4.49} to see that
\begin{align}\label{4.53}
\bigg|\fr{\pa^2 r_1}{\pa \eta^2}\bigg|&\leq \exp\bigg(\int_{0}^t\bigg|\fr{\pa \lambda_1}{\pa r}\bigg|\ {\rm d}\tau\bigg)\times\int_{0}^t\bigg|\fr{\pa^2 \lambda_1}{\pa r^2}\bigg|\cdot\bigg|\fr{\pa r_1}{\pa \eta}\bigg|\ {\rm d}\tau \nonumber \\[4pt]
&\leq \exp\bigg(\int_{0}^\delta K(1+M\delta)^2\tau\ {\rm d}\tau\bigg)\times\int_{0}^\delta K(1+M\delta)^3\tau\cdot e^{K(1+M\delta)^2\delta^2}\ {\rm d}\tau  \nonumber \\[4pt]
&\leq K\delta^2(1+M\delta)^3e^{K(1+M\delta)^2\delta^2}.
\end{align}
Inserting \eqref{4.53} into \eqref{4.52} and applying \eqref{4.43} again results in
\begin{align}\label{4.54}
\bigg|\fr{\pa^2 U}{\pa \eta^2}(\xi,\eta)\bigg|&\leq\int_{0}^\xi\bigg\{\bigg(\fr{|u_{rr}|+|v_{rr}|}{2t}+\bigg|\fr{\pa^2 b_1}{\pa r^2}\bigg|\bigg)\cdot\bigg|\fr{\pa r_1}{\pa \eta}\bigg|^2  \nonumber \\[4pt]
&\qquad \qquad   \ +\bigg(\fr{|u_{r}|+|v_{r}|}{2t}+\bigg|\fr{\pa b_1}{\pa r}\bigg|\bigg)\cdot\bigg|\fr{\pa^2 r_1}{\pa \eta^2}\bigg|\bigg\} \ {\rm d}t \nonumber \\[4pt]
&\leq \int_{0}^\xi\bigg\{\bigg(\fr{M}{2}t+K(1+M\delta)^4t\bigg)e^{K(1+M\delta)^2\delta^2} \nonumber \\[4pt]
&\qquad \qquad   \ +\bigg(\fr{M}{2}t+K(1+M\delta)^3t\bigg)K\delta^2(1+M\delta)^3e^{K(1+M\delta)^2\delta^2}\bigg\}\ {\rm d}t \nonumber \\[4pt]
&\leq \xi^2\bigg(\fr{M}{4}+K(1+M\delta)^4\bigg)[1+K\delta^2(1+M\delta)^3]e^{K(1+M\delta)^2\delta^2}.
\end{align}
With similar arguments, one proceeds to obtain
\begin{align*}
\bigg|\fr{\pa^2 V}{\pa \eta^2}(\xi,\eta)\bigg|&\leq \xi^2\bigg(\fr{M}{4}+K(1+M\delta)^4\bigg)[1+K\delta^2(1+M\delta)^3]e^{K(1+M\delta)^2\delta^2}, \\[4pt]
\bigg|\fr{\pa^2 W}{\pa \eta^2}(\xi,\eta)\bigg|&\leq \xi^2K(1+M\delta)^4[1+K\delta^2(1+M\delta)^3]e^{K(1+M\delta)^2\delta^2},
\end{align*}
and
\begin{align}\label{4.55}
&\bigg|\fr{ U_{\eta\eta}(\xi,\eta)}{\xi^2}\bigg| +\bigg|\fr{ V_{\eta\eta}(\xi,\eta)}{\xi^2}\bigg| +\bigg|\fr{ W_{\eta\eta}(\xi,\eta)}{\xi^2}\bigg| \nonumber \\[4pt]
\leq &\bigg(\fr{M}{2}+K(1+M\delta)^4\bigg)[1+K\delta^2(1+M\delta)^3]e^{K(1+M\delta)^2\delta^2}.
\end{align}
\vspace{0.2cm}

By choosing $M\geq64K\geq64$ and letting $\delta\leq\min\{1/M,\delta_0\}$, we observe
\begin{align*}
&\bigg(\fr{M}{2}+K(1+M\delta)^4\bigg)[1+K\delta^2(1+M\delta)^3]e^{K(1+M\delta)^2\delta^2}
\\[4pt]
\leq&\bigg(\fr{M}{2}+16K\bigg)(1+8K\delta^2)e^{4K\delta^2} \\[4pt]
\leq&\bigg(\fr{M}{2}+\fr{M}{4}\bigg)\bigg(1+\fr{\delta}{8}\bigg) e^{\fr{\delta}{16}}\leq\fr{27}{32}e^{\fr{1}{16}}M<M,
\end{align*}
which,  along with \eqref{4.47}, \eqref{4.51} and \eqref{4.55}, indicates that (P2)-(P4) are preserved by the mapping $\mathcal{T}$. To determine $\mathcal{T}(\mathbf{F})\in\mathcal{S}^{M}_\delta$, it suffices to show that $U_\xi(0,\eta)=V_\xi(0,\eta)=W_\xi(0,\eta)=0$. We differentiate the first equation of \eqref{4.9} with respect to $\xi$ to arrive at
\begin{align}\label{4.56}
\fr{\pa U}{\pa \xi}(\xi, \eta)=\fr{u+v}{2\xi}+b_1+\int_{0}^\xi\bigg(\fr{u_r+v_r}{2t}+\fr{\pa b_1}{\pa r}\bigg)\cdot\fr{\pa r_1}{\pa \xi}\ {\rm d}t,
\end{align}
where
\begin{align}\label{4.57}
\fr{\pa r_1}{\pa \xi}(t;\xi,\eta)=-\lambda_1\fr{\pa r_1}{\pa \eta}(t;\xi,\eta).
\end{align}
From \eqref{4.56} and \eqref{4.43}, we get $U_\xi(0,\eta)=0$. Similarly, we have $V_\xi(0,\eta)=W_\xi(0,\eta)=0$, which implies that $\mathcal{T}$ do map $\mathcal{S}^{M}_\delta$ onto itself.
\vspace{0.2cm}

Next we establish \eqref{4.44} for some positive constant $\nu<1$. It follows by the first equation of \eqref{4.8} that
\begin{align*}
\fr{\rm d}{{\rm d}_1t}U&=\fr{u+v}{2t}+b_1(u,v,w,t,r), \\[5pt]
\fr{\rm d}{{\rm d}_{\hat{1}}t}\widehat{U}
&=\fr{\widehat{u}+\widehat{v}}{2t}+b_1(\widehat{u},\widehat{v},\widehat{w},t,r),
\end{align*}
from which and \eqref{4.7} we obtain
\begin{align}\label{4.58}
\fr{\rm d}{{\rm d}_1t}(U-\widehat{U})&=\fr{\rm d}{{\rm d}_1t}U-\bigg(\fr{\rm d}{{\rm d}_{\hat{1}}t}\widehat{U} +[\lambda_1(\widehat{u},\widehat{v},\widehat{w},t,r)-\lambda_1(u,v,w,t,r)]\widehat{U}_r\bigg) \nonumber \\[4pt]
&=\fr{(u-\widehat{u})+(v-\widehat{v})}{2t}+[b_1(u,v,w,t,r)-b_1(\widehat{u},\widehat{v},\widehat{w},t,r)]  \nonumber \\[4pt] &\qquad +[\lambda_1(u,v,w,t,r)-\lambda_1(\widehat{u},\widehat{v},\widehat{w},t,r)]\widehat{U}_r \nonumber \\[4pt]
&:=I_4+I_5+I_6.
\end{align}
Obviously, there hold
\begin{align}\label{4.59}
|I_4|\leq \fr{|u-\widehat{u}|+|v-\widehat{v}|}{2t}\leq \fr{t}{2}d(\mathbf{F},\widehat{\mathbf{F}}),
\end{align}
and
\begin{align}\label{4.60}
|I_6|&\leq |\lambda_1(u,v,w,t,r)-\lambda_1(\widehat{u},\widehat{v},\widehat{w},t,r)|\cdot|\widehat{U}_r|\nonumber \\[4pt]
&\leq\fr{t^2\sqrt{1-t^2}}{F}\cdot\fr{|\psi-(a_0+a_1t)-G\widehat{w}|\cdot|v-\widehat{v}| +G|\widehat{v}+a_0+a_1t|\cdot|w-\widehat{w}|}{|v-Gw+\psi|\cdot|\widehat{v}-G\widehat{w}+\psi|}|\widehat{U}_r|
\nonumber \\[4pt]
&\leq KM(1+M\delta)t^6d(\mathbf{F},\widehat{\mathbf{F}}).
\end{align}
Recalling \eqref{4.13}, we have
\begin{align}\label{4.61}
|I_5|&\leq \bigg|\fr{u+v}{2t}+a_1\bigg|\cdot\fr{|I_1-\widehat{I}_1|}{F} +\fr{|\widehat{I}_1|}{F}\cdot\fr{|u-\widehat{u}|+|v-\widehat{v}|}{2t}  \nonumber \\[4pt] &\qquad +\fr{t^2\sqrt{1-t^2}}{F}|I_2-\widehat{I}_2| +\fr{t}{F}|I_3-\widehat{I}_3|,
\end{align}
where  $I_i$, $i=1,2,3$, are defined in \eqref{4.13} and $\widehat{I}_i$, $i=1,2,3$,  are the terms obtained by replacing
$(u,v,w)$ with $(\widehat{u},\widehat{v},\widehat{w})$ in $I_i$. We estimate $|I_i-\widehat{I}_i|$, $i=1,2,3$, in the following. Due to the expressions of $I_i (i=1,2,3)$, one derives
\begin{align}\label{4.62}
&|I_1-\widehat{I}_1|\leq (\kappa+1)\bigg|\fr{u+v+2a_1t}{v-Gw+\psi} -\fr{\widehat{u}+\widehat{v}+2a_1t}{\widehat{v}-G\widehat{w}+\psi}\bigg| \nonumber \\[4pt] &\qquad \qquad \quad  +(\kappa+2-t^2)t^2\bigg|\fr{v+a_0+a_1t}{v-Gw+\psi}-\fr{\widehat{v}+a_0+a_1t}{\widehat{v}-G\widehat{w}+\psi}\bigg| \nonumber \\[4pt] &\qquad \qquad \quad  \ +(\kappa+1-t^2)t^2G\bigg|\fr{w+H_0}{v-Gw+\psi}-\fr{\widehat{w}+H_0}{\widehat{v}-G\widehat{w}+\psi}\bigg| \nonumber \\
\leq& K(|\widehat{v}-G\widehat{w}+\psi|\cdot|u-\widehat{u}|+|\psi-\widehat{u}-G\widehat{w}-2a_1t|\cdot|v-\widehat{v}| +G|\widehat{u}+\widehat{v}+2a_1t|\cdot|w-\widehat{w}|) \nonumber \\
&\ +Kt^2(|\psi-a_0-a_1t-G\widehat{w}|\cdot|v-\widehat{v}|+G|\widehat{v}+a_0+a_1t|\cdot|v-\widehat{v}|) \nonumber \\
&\ \ +Kt^2(|\widehat{w}+H_0|\cdot|v-\widehat{v}|+|\widehat{v}+\psi+GH_0|\cdot|w-\widehat{w}|) \nonumber \\
\leq& K(1+M\delta)t^2d(\mathbf{F},\widehat{\mathbf{F}}),
\end{align}
\begin{align}\label{4.63}
|I_2-\widehat{I}_2|&\leq |-a'_0+a'_1t|\cdot\bigg|\fr{v+a_0+a_1t}{v-Gw+\psi} -\fr{\widehat{v}+a_0+a_1t}{\widehat{v}-G\widehat{w}+\psi}\bigg| \nonumber \\
&\leq K(|\psi-G\widehat{w}-a_0-a_1t|\cdot|v-\widehat{v}| +G|\widehat{v}+a_0+a_1t|\cdot|w-\widehat{w}|)
\nonumber \\
& \leq K(1+M\delta)t^2d(\mathbf{F},\widehat{\mathbf{F}}),
\end{align}
and
\begin{align}\label{4.64}
|I_3-\widehat{I}_3|\leq &(\kappa+2-2t^2)\bigg|\fr{(u-a_0+a_1t)(v+a_0+a_1t)}{v-Gw+\psi} -\fr{(\widehat{u}-a_0+a_1t)(\widehat{v}+a_0+a_1t)}{\widehat{v}-G\widehat{w}+\psi}\bigg| \nonumber \\[4pt]
&+(\kappa+a-t^2)G\bigg|\fr{(w+H_0)(v+a_0+a_1t)}{v-Gw+\psi} -\fr{(\widehat{w}+H_0)(\widehat{v}+a_0+a_1t)}{\widehat{v}-G\widehat{w}+\psi}\bigg| \nonumber \\
\leq& K\bigg\{|(v+a_0+a_1t)(\widehat{v}-G\widehat{w}+\psi)|\cdot|u-\widehat{u}| \nonumber \\
&\qquad +|(\widehat{u}-a_0+a_1t)(\psi-G\widehat{w}-a_0-a_1t)|\cdot|v-\widehat{v}| \nonumber \\
&\qquad \ +G|(\widehat{u}-a_0+a_1t)(\widehat{v}+a_0+a_1t)|\cdot|w-\widehat{w}|\bigg\} \nonumber \\
&+ K\bigg\{|(\widehat{w}+H_0)(\psi-Gw-a_0-a_1t|)|\cdot|v-\widehat{v}| \nonumber \\ &\qquad \quad +|(v+a_0+a_1t)(\widehat{v}+H_0G+\psi)|\cdot|w-\widehat{w}|\bigg\} \nonumber \\
\leq & K(1+M\delta)^2t^2d(\mathbf{F},\widehat{\mathbf{F}}).
\end{align}
Inserting \eqref{4.62}-\eqref{4.64} into \eqref{4.61} and recalling \eqref{4.14} acquires
\begin{align}\label{4.65}
|I_5|&\leq (K+M\delta)\cdot K(1+M\delta)t^2d(\mathbf{F},\widehat{\mathbf{F}}) +K(1+M\delta)t\cdot td(\mathbf{F},\widehat{\mathbf{F}}) \nonumber \\
&\quad +Kt^2\cdot K(1+M\delta)t^2d(\mathbf{F},\widehat{\mathbf{F}}) +Kt\cdot K(1+M\delta)^2t^2d(\mathbf{F},\widehat{\mathbf{F}}) \nonumber \\
&\leq K(1+M\delta)^2t^2d(\mathbf{F},\widehat{\mathbf{F}}).
\end{align}
We combine \eqref{4.58}-\eqref{4.60} and \eqref{4.65} to achieve
\begin{align*}
|U-\widehat{U}|&\leq\int_{0}^\xi(|I_4|+|I_5|+|I_6|)\ {\rm d}t \nonumber \\[4pt]
&\leq \int_{0}^\xi t \bigg\{\fr{1}{2}+K(1+M\delta)^2t+KM(1+M\delta)t^5\bigg\} d(\mathbf{F},\widehat{\mathbf{F}})\ {\rm d}t
\nonumber \\[5pt]
&\leq \int_{0}^\xi t \bigg\{\fr{1}{2}+K(1+M\delta)^2\delta+KM(1+M\delta)\delta^5\bigg\} d(\mathbf{F},\widehat{\mathbf{F}})\ {\rm d}t
\nonumber \\[5pt]
&\leq \fr{1}{2}\xi^2 \bigg\{\fr{1}{2}+K(1+M\delta)^2\delta+KM(1+M\delta)\delta^5\bigg\} d(\mathbf{F},\widehat{\mathbf{F}}),
\end{align*}
from which we obtain
\begin{align}\label{4.66}
\fr{|U-\widehat{U}|}{\xi^2}\leq \fr{1}{2}\bigg\{\fr{1}{2}+K(1+M\delta)^2\delta+KM(1+M\delta)\delta^5\bigg\} d(\mathbf{F},\widehat{\mathbf{F}}).
\end{align}
Due to the symmetry, we use the same argument as above for $V$ to have
\begin{align}\label{4.67}
\fr{|V-\widehat{V}|}{\xi^2}\leq \fr{1}{2}\bigg\{\fr{1}{2}+K(1+M\delta)^2\delta+KM(1+M\delta)\delta^5\bigg\} d(\mathbf{F},\widehat{\mathbf{F}}).
\end{align}
For the term $(W-\widehat{W})$, it follows that
\begin{align}\label{4.68}
\fr{\rm d}{{\rm d}_3t}(W-\widehat{W})&=\fr{\rm d}{{\rm d}_3t}W-\bigg(\fr{\rm d}{{\rm d}_{\hat{3}}t}\widehat{W} +[\lambda_3(\widehat{u},\widehat{v},\widehat{w},t,r)-\lambda_3(u,v,w,t,r)]\widehat{W}_r\bigg) \nonumber \\[4pt]
&=[b_3(u,v,w,t,r)-b_3(\widehat{u},\widehat{v},\widehat{w},t,r)]  \nonumber \\[4pt] &\qquad +[\lambda_3(u,v,w,t,r)-\lambda_3(\widehat{u},\widehat{v},\widehat{w},t,r)]\widehat{W}_r \nonumber \\[4pt]
&:=I_7+I_8.
\end{align}
Recalling the definitions  of $b_3$ and $\lambda_3$, we compute
\begin{align*}
I_7=-\fr{2t^2\sqrt{1-t^2}H'_0}{F(u+v+2a_1t)(\widehat{u}+\widehat{v}+2a_1t)}\{(\widehat{v}+a_0 +a_1t)(u-\widehat{u})-(\widehat{u}-a_0 +a_1t)(v-\widehat{v})\},
\end{align*}
and
\begin{align*}
I_8=\fr{2t^2\sqrt{1-t^2}\widehat{W}_r}{F(u+v+2a_1t)(\widehat{u}+\widehat{v}+2a_1t)}\{(\widehat{v}+a_0 +a_1t)(u-\widehat{u})-(\widehat{u}-a_0 +a_1t)(v-\widehat{v})\},
\end{align*}
from which, together with  \eqref{4.11} and  \eqref{4.43}, we obtain
$$
|I_7|\leq K(1+M\delta)t^2d(\mathbf{F},\widehat{\mathbf{F}}),\quad |I_8|\leq K(1+M\delta)t^4d(\mathbf{F},\widehat{\mathbf{F}}).
$$
Putting the above into \eqref{4.68} yields
\begin{align*}
|W-\widehat{W}|\leq\int_{0}^\xi(|I_7|+|I_8|)\ {\rm d}t\leq K(1+M\delta)\xi^3d(\mathbf{F},\widehat{\mathbf{F}}).
\end{align*}
That is, there holds
\begin{align}\label{4.69}
\fr{|W-\widehat{W}|}{\xi^2}\leq K(1+M\delta)\delta d(\mathbf{F},\widehat{\mathbf{F}}).
\end{align}
We add \eqref{4.66}, \eqref{4.67} and \eqref{4.69} to find
\begin{align*}
&\fr{|U-\widehat{U}|}{\xi^2}+\fr{|V-\widehat{V}|}{\xi^2}+\fr{|W-\widehat{W}|}{\xi^2} \nonumber \\[4pt]
\leq &\bigg\{\fr{1}{2} +K(1+M\delta)\delta +K(1+M\delta)^2\delta+KM(1+M\delta)\delta^5\bigg\}d(\mathbf{F},\widehat{\mathbf{F}}) \nonumber \\[4pt]
=:&\nu d(\mathbf{F},\widehat{\mathbf{F}})
\end{align*}
for $\nu<1$ if $\delta$ is chosen as before, which completes the proof of \eqref{4.44}. Thus $\mathcal{T}$ is a contraction under the metric $d$.
\end{proof}

\noindent{\bf Step 4 (Properties of the limit function).}  Since $(\mathcal{S}^{M}_\delta,d)$ is not a closed subset in the complete space $(\mathcal{W}^{M}_\delta,d)$, we need to confirm that the limit vector function of the iteration sequence $\{\mathbf{F}^{(n)}\}$, defined by $\mathbf{F}^{(n)}=\mathcal{T}\mathbf{F}^{(n-1)}$, is in $\mathcal{S}^{M}_\delta$. This follows directly from Arzela-Ascoli Theorem and the following lemma.
\vspace{0.2cm}

\begin{lem}\label{lem2}
With the assumptions in Theorem \ref{thm3},   the iteration sequence $\{\mathbf{F}^{(n)}\}$ has the property  that $\{\pa_t \mathbf{F}^{(n)}(t,r)\}$ and $\{\pa_r\mathbf{F}^{(n)}(t,r)\}$ are uniformly Lipschitz continuous on $D_\delta$.
\end{lem}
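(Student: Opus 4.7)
The plan is to prove Lemma \ref{lem2} by establishing uniform (in $n$) $L^\infty$ bounds on all three second-order derivatives $\partial_{rr}\mathbf{F}^{(n)}$, $\partial_{tr}\mathbf{F}^{(n)}$, $\partial_{tt}\mathbf{F}^{(n)}$ on $D_\delta$, which is equivalent to the uniform Lipschitz continuity of $\partial_t\mathbf{F}^{(n)}$ and $\partial_r\mathbf{F}^{(n)}$. The bound on $\partial_{rr}\mathbf{F}^{(n)}$ comes for free from property (P4), since each $\mathbf{F}^{(n)}\in\mathcal{S}_\delta^M$ satisfies $|\partial_{rr}\mathbf{F}^{(n)}|\leq M\delta^2$. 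The task therefore reduces to controlling $\partial_{tr}\mathbf{F}^{(n)}$ and $\partial_{tt}\mathbf{F}^{(n)}$.

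For this, I would exploit the fact that $\mathbf{F}^{(n)}=\mathcal{T}\mathbf{F}^{(n-1)}$ satisfies the non-characteristic form of the iteration equations \eqref{4.8}, namely (componentwise)
\[
\partial_t\mathbf{F}^{(n)}(t,r)=-\Lambda\bigl(\mathbf{F}^{(n-1)},t,r\bigr)\,\partial_r\mathbf{F}^{(n)}(t,r)+\mathbf{S}\bigl(\mathbf{F}^{(n-1)},t,r\bigr),
\]
where $\Lambda$ stands for the appropriate eigenvalue $\lambda_i$ and $\mathbf{S}$ gathers the $b_i$ source together with the singular factor $(u+v)/(2t)$. Note that $\mathbf{S}$ is in fact regular on $D_\delta$ because $u^{(n-1)}+v^{(n-1)}=O(t^2)$ by (P1)--(P2). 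Differentiating this identity in $r$ expresses $\partial_{tr}\mathbf{F}^{(n)}$ as a combination of $\partial_r\Lambda\cdot\partial_r\mathbf{F}^{(n)}$, $\Lambda\cdot\partial_{rr}\mathbf{F}^{(n)}$, and $\partial_r\mathbf{S}$. Every factor is then uniformly controlled by the \emph{a priori} estimates \eqref{4.43} together with properties (P2)--(P4) for both $\mathbf{F}^{(n-1)}$ and $\mathbf{F}^{(n)}$, giving a uniform bound $|\partial_{tr}\mathbf{F}^{(n)}|\leq C$ on $D_\delta$ once $\delta$ is taken small enough.

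Differentiating the same identity in $t$ produces $\partial_{tt}\mathbf{F}^{(n)}$, which apart from the already-controlled terms $\Lambda\partial_{tr}\mathbf{F}^{(n)}$ and the explicit $t$-derivatives of the coefficients, involves $\partial_t\Lambda(\mathbf{F}^{(n-1)})$ and $\partial_t\mathbf{S}(\mathbf{F}^{(n-1)})$. These quantities reach $\partial_t\mathbf{F}^{(n-1)}$ through the chain rule. To close the argument, I would bound $\partial_t\mathbf{F}^{(n-1)}$ directly from the same identity applied one iteration earlier: using $|\partial_r\mathbf{F}^{(n-1)}|\leq Mt^2$ and $|\Lambda|,|\mathbf{S}|\leq Kt$ from \eqref{4.43}, we obtain $|\partial_t\mathbf{F}^{(n-1)}|\leq Kt$ uniformly in $n$. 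Substituting this back yields the uniform bound on $\partial_{tt}\mathbf{F}^{(n)}$ after one further shrinkage of $\delta$.

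The main obstacle is the bookkeeping in this last step: the $t$-derivatives of $\Lambda$ and $\mathbf{S}$ contain divisions by quantities such as $v-GW+\psi$, whose chain-rule differentiation produces terms like $\partial_t v/(v-GW+\psi)^2$, so one must carefully verify that the uniform lower bounds \eqref{4.11} combined with the estimate $|\partial_t\mathbf{F}^{(n-1)}|\leq Kt$ keep every such expression finite and independent of $n$; in fact the algebra parallels the $\partial_r$ computations of Step 2 almost verbatim. Once these three uniform second-derivative bounds are secured, the sequence $\{\mathbf{F}^{(n)}\}$ is precompact in $C^1(D_\delta)$ by Arzelà--Ascoli, and combined with the contraction property from Lemma \ref{lem1} its unique limit point inherits properties (P1)--(P4), so the limit lies in $\mathcal{S}_\delta^M$, completing the existence proof of Theorem \ref{thm3}.
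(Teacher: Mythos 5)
Your proposal is correct, but it routes the argument through the pointwise differential form of the iteration rather than through the integral representation that the paper actually differentiates. The paper's proof of Lemma~\ref{lem2} differentiates the integral formula~\eqref{4.9} once for $U_\xi$ (display~\eqref{4.56}) and then again for $U_{\xi\eta}$ and $U_{\xi\xi}$ (displays~\eqref{4.72} and~\eqref{4.81}), which forces it to track the characteristic-flow Jacobians $\partial r_i/\partial\eta$, $\partial r_i/\partial\xi$ and their second derivatives (estimates~\eqref{4.49},~\eqref{4.53},~\eqref{a5},~\eqref{4.80}); these are the source of the exponential factors throughout Steps 1--3. You instead differentiate the identity
\[
\partial_t\mathbf{F}^{(n)}=-\Lambda\bigl(\mathbf{F}^{(n-1)},t,r\bigr)\,\partial_r\mathbf{F}^{(n)}+\mathbf{S}\bigl(\mathbf{F}^{(n-1)},t,r\bigr)
\]
in $r$ and then in $t$, bypassing the Jacobian bookkeeping entirely; the cost is that you must first bound $\partial_t\mathbf{F}^{(n-1)}$ from the same identity one iteration earlier (which is precisely the inductive role the paper's Step~1 bound plays inside~\eqref{4.82}), and you must interpret the twice-differentiated equation a.e., since membership in $\mathcal{S}^M_\delta$ only provides a Lipschitz, not $C^2$, condition on $\partial_r\mathbf{F}$. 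Both routes then feed on the same algebraic core -- the bounds on $\partial_t b_i$ and $\partial_t\lambda_i$ in~\eqref{4.75}--\eqref{4.79} and the display preceding~\eqref{4.80} -- so the heavy lifting is shared; your version is lighter in notation, while the paper's keeps the integral representation uniform across Lemmas~\ref{lem1} and~\ref{lem2}. One small imprecision: the bound you claim, $|\partial_t\mathbf{F}^{(n-1)}|\leq Kt$, should read $\leq C(K,M)\,t$, since the dominant contribution to $\mathbf{S}$ is $(u+v)/(2t)$ of size $O(Mt)$; this does not affect the conclusion, as only independence of $n$ is needed.
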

\begin{proof}
Assume $(u,v,w)^T\in\mathcal{S}^{M}_\delta$. Then, due to Lemma \ref{lem1}, we know that $(U,V,W)^T=\mathcal{T}(u,v,w)^T$ also in $\mathcal{S}^{M}_\delta$. The proof of this lemma will be given in three steps.
\vspace{0.2cm}

\noindent {\bf Step 1.  To show $|U_t|+|V_t|+|W_t|\leq 2Mt$. } To prove this, we recall \eqref{4.56} and use \eqref{4.43}, \eqref{4.49} and \eqref{4.57} to observe
\begin{align}\label{4.70}
&\bigg|\fr{\pa U}{\pa \xi}(\xi,\eta)\bigg|\leq\bigg|\fr{u+v}{2\xi}\bigg|+|b_1| +\int_{0}^\xi\bigg(\bigg|\fr{u_r+v_r}{2t}\bigg|+\bigg|\fr{\pa b_1}{\pa r}\bigg|\bigg)\cdot\bigg|\lambda_1\fr{\pa r_1}{\pa \eta}\bigg|\ {\rm d}t \nonumber \\[4pt]
\leq &\fr{1}{2}M\xi+K\xi(1+M\delta)^2+\int_{0}^\xi\bigg(Mt+Kt(1+M\delta)^3\bigg)\cdot Kt(1+M\delta) e^{K\delta^2(1+M\delta)^2} \ {\rm d}t \nonumber \\[4pt]
\leq&\bigg\{\fr{1}{2}M+K(1+M\delta)^2 +K\delta^2(1+M\delta)\bigg(M+K(1+M\delta)^3\bigg)e^{K\delta^2(1+M\delta)}\bigg\}\xi.
\end{align}
Similar arguments give the bounds for $V_\xi$ and $W_\xi$
\begin{align*}
\bigg|\fr{\pa V}{\pa \xi}(\xi,\eta)\bigg|
&\leq\bigg\{\fr{1}{2}M+K(1+M\delta)^2 +K\delta^2(1+M\delta)\bigg(M+K(1+M\delta)^3\bigg)e^{K\delta^2(1+M\delta)}\bigg\}\xi,   \\[4pt]
\bigg|\fr{\pa W}{\pa \xi}(\xi,\eta)\bigg|
&\leq \bigg\{K(1+M\delta)^2+K\delta^2(1+M\delta)^4e^{K\delta^2(1+M\delta)}\bigg\}\xi,
\end{align*}
which, along with \eqref{4.70} and the choices of $M$ and $\delta$,  leads to
\begin{align}\label{4.71}
\bigg|\fr{\pa U}{\pa \xi}(\xi,\eta)\bigg|+\bigg|\fr{\pa V}{\pa \xi}(\xi,\eta)\bigg| +\bigg|\fr{\pa W}{\pa \xi}(\xi,\eta)\bigg|\leq2M\xi.
\end{align}
\vspace{0.2cm}

\noindent {\bf Step 2. To demonstrate $|U_{tr}|+|V_{tr}|+|W_{tr}|\leq 2Mt$.}  Differentiating \eqref{4.56} with respect to $\eta$ arrives at
\begin{align}\label{4.72}
&\fr{\pa}{\pa \xi}\bigg(\fr{\pa U}{\pa \eta}(\xi,\eta)\bigg)=\bigg(\fr{u_r+v_r}{2\xi} +\fr{\pa b_1}{\pa r}\bigg)\cdot\fr{\pa r_1}{\pa\eta} \nonumber \\[4pt]
&\quad +\int_{0}^\xi\bigg\{\bigg(\fr{u_{rr}+v_{rr}}{2t} +\fr{\pa^2b_1}{\pa r^2}\bigg)\cdot\fr{\pa r_1}{\pa \eta}\cdot\fr{\pa r_1}{\pa \xi} +\bigg(\fr{u_{r}+v_{r}}{2t} +\fr{\pa b_1}{\pa r}\bigg)\cdot\fr{\pa^2 r_1}{\pa \xi\pa \eta}\bigg\}\ {\rm d}t,
\end{align}
where
$$
\fr{\pa^2 r_1}{\pa \xi\pa \eta}(t;\xi,\eta)=\exp\bigg(\int_{\xi}^t\fr{\pa \lambda_1}{\pa r}\ {\rm d}s\bigg)\cdot\bigg\{\int_{\xi}^t\fr{\pa^2 \lambda_1}{\pa r^2}\cdot\fr{\pa r_1}{\pa \xi}\ {\rm d}s -\fr{\pa\lambda_1}{\pa r}\bigg\}.
$$
Note that
\begin{align}\label{a5}
\bigg|\fr{\pa^2 r_1}{\pa \xi\pa \eta}\bigg|&\leq e^{K(1+M\delta)\delta^2}\bigg\{\int_{0}^\delta Kt(1+M\delta)^3\cdot Kt(1+M\delta) e^{K\delta^2(1+M\delta)^2}\ {\rm d}t+K\delta(1+M\delta)^2\bigg\} \nonumber \\[4pt]
& \leq K\delta(1+M\delta)^4e^{K\delta^2(1+M\delta)^2}.
\end{align}
Then we have
\begin{align}\label{4.73}
\bigg|\fr{\pa^2 U}{\pa\xi\pa\eta}\bigg|\leq&\bigg(\fr{1}{2}M\xi+K\xi(1+M\delta)^3\bigg)\cdot e^{K\delta^2(1+M\delta)^2} \nonumber \\[4pt]
&\ \ +\int_{0}^\xi\bigg\{\bigg(\fr{1}{2}Mt +Kt(1+M\delta)^4\bigg)\cdot Kt(1+M\delta) e^{K\delta^2(1+M\delta)^2} \nonumber \\[4pt]
&\qquad \qquad\ \ +\bigg(\fr{1}{2}Mt +Kt(1+M\delta)^3\bigg)\cdot K\delta(1+M\delta)^4e^{K\delta^2(1+M\delta)^2}\bigg\}\ {\rm d}\tau
\nonumber \\[5pt]
\leq& \bigg\{\bigg(\fr{1}{2}M+K(1+M\delta)^3\bigg) \nonumber \\[5pt] &\quad +K\delta^2(1+M\delta)^4\bigg(M+K(1+M\delta)^4\bigg) \bigg\}e^{K\delta^2(1+M\delta)^2}\xi.
\end{align}
The same bound is also taken for $V_{\xi\eta}$ by the symmetry. For the term $W_{\xi\eta}$, we can obtain
\begin{align*}
\bigg|\fr{\pa^2 W}{\pa\xi\pa\eta}\bigg|\leq K(1+M\delta)^3\bigg\{ 1+\delta^2(1+M\delta)^6\bigg\}e^{K\delta^2(1+M\delta)^2}\xi.
\end{align*}
Therefore, if $M$ and $\delta$ are chosen as above, one has
\begin{align}\label{4.74}
\bigg|\fr{\pa^2 U}{\pa\xi\pa\eta}\bigg|+\bigg|\fr{\pa^2 V}{\pa\xi\pa\eta}\bigg| +\bigg|\fr{\pa^2 W}{\pa\xi\pa\eta}\bigg|\leq 2M\xi.
\end{align}
\vspace{0.2cm}

\noindent {\bf  Step 3. To prove $|U_{tt}|+|V_{tt}|+|W_{tt}|\leq 7M$.}  We first estimate $\pa_t b_1$. Differentiating \eqref{4.13} with respect to $t$ gives
\begin{align}\label{4.75}
\fr{\pa b_1}{\pa t}(u,v,w,t,r)=&\bigg(-\fr{u_t+v_t}{2t}+\fr{u+v}{2t^2}\bigg)\fr{I_1}{F} -\bigg(\fr{u+v}{2t}+a_1\bigg)\fr{I_{1t}}{F}+\fr{2(1-t^2)t-t^3}{F\sqrt{1-t^2}}I_2 \nonumber \\[4pt]
&\ +\fr{t^2\sqrt{1-t^2}}{F}I_{2t} +\fr{I_3}{F} +\fr{tI_{3t}}{F} -\fr{b_1}{F}F'(t).
\end{align}
By a direct calculation, we obtain
\begin{align*}
I_{1t}=&\fr{(\kappa+1)(u_t+v_t+2a_1)}{v-Gw+\psi} -2t\fr{(\kappa+2-t^2)(v+a_0+a_1t)-(\kappa+1-t^2)G(w+H_0)}{v-Gw+\psi} \\[4pt] &-t^2\fr{(\kappa+2-t^2)(v_t+a_1)-(\kappa+1-t^2)(Gw_t+G'(w+H_0))}{v-Gw+\psi} \\[4pt]
&\ +2t^3\fr{v+a_0+a_1t-G(w+H_0)}{v-Gw+\psi}-\fr{I_1}{v-Gw+\psi}(v_t-Gw_t-G'w+\psi_t),
\end{align*}
\begin{align*}
I_{2t}=\fr{a'_1(v+a_0+a_1t)+(-a'_0+a'_1t)(v_t+a_1)}{v-Gw+\psi} -\fr{I_2(v_t-Gw_t-G'w+\psi_t)}{v-Gw+\psi},
\end{align*}
and
\begin{align*}
I_{3t}=&\fr{(\kappa+2-2t^2)(u_t+a_1)+(\kappa+1-t^2)(Gw_t+G'(w+H_0))}{v-Gw+\psi}(v+a_0+a_1t) \\[4pt]
&-2t\fr{2(u-a_0+a_1t)+G(w+H_0)}{v-Gw+\psi}(v+a_0+a_1t) \\[4pt] &\ +\fr{I_3(v_t+a_1)}{v+a_0+a_1t} -\fr{I_3(v_t-Gw_t-G'w+\psi_t)}{v-Gw+\psi}.
\end{align*}
We recall the definitions of $G, F$ and $\psi$ to calculate
$$
G'=\fr{-(\kappa+1)t}{(\kappa+1-t^2)^2}\bigg(\fr{1-t^2}{\kappa+1-t^2}\bigg)^{\fr{1-\kappa}{2\kappa}}, \ \ F'=-2t(\kappa+2-2t^2),\quad \psi_t=a_1-G'H_0.
$$
Thus we find, by using  \eqref{4.14} and the fact $(u,v,w)^T\in\mathcal{S}^{M}_\delta$,  that
\begin{align}\label{4.76}
|I_{1t}|&\leq K(1+M\delta) +K\delta(1+M\delta) +K\delta^2(1+M\delta) +K\delta^3(1+M\delta) +K\delta(1+M\delta)^2 \nonumber \\
&\leq K(1+M\delta)^2,
\end{align}
\begin{align}\label{4.77}
|I_{2t}|\leq K(1+M\delta)+K(1+M\delta)^2\leq K(1+M\delta)^2,
\end{align}
and
\begin{align}\label{4.78}
|I_{3t}|&\leq K(1+M\delta)^2+K\delta(1+M\delta)^2+K(1+M\delta)^2+K(1+M\delta)^3 \nonumber \\
&\leq K(1+M\delta)^3.
\end{align}
Inserting \eqref{4.76}-\eqref{4.78} into \eqref{4.75} and using \eqref{4.14} and \eqref{4.15} achieves
\begin{align}\label{4.79}
\bigg|\fr{\pa b_1}{\pa t}\bigg|\leq &K(M\delta+M)\cdot K\delta(1+M\delta) +K(1+M\delta)\cdot K(1+M\delta)^2 +K\delta\cdot K(1+M\delta) \nonumber \\
&+K\delta^2\cdot K(1+M\delta)^2 +K(1+M\delta)^2+ K\delta(1+M\delta)^3 +K\delta(1+M\delta)^2\cdot K\delta \nonumber \\
\leq & K(1+M\delta)^3.
\end{align}
Moreover, we compute
\begin{align*}
\fr{\pa \lambda_1}{\pa t}=&-\fr{t^2\sqrt{1-t^2}(v_t+a_1)}{F(v-Gw+\psi)} -\fr{(2t-3t^3)(v+a_0+a_1t)}{F\sqrt{1-t^2}(v-Gw+\psi)} \\[4pt] &\ -\fr{\lambda_1[F'(v-Gw+\psi)+F(v_t-Gw_t-G'w+\psi_t)]}{F(v-Gw+\psi)},
\end{align*}
from which   we have
$$
\bigg|\fr{\pa \lambda_1}{\pa t}\bigg|\leq K\delta(1+M\delta)^2.
$$
Then it follows by \eqref{4.57}, \eqref{a5} and the above inequality that
\begin{align}\label{4.80}
\bigg|\fr{\pa^2 r_1}{\pa \xi^2}\bigg|&\leq \bigg|\fr{\pa \lambda_1}{\pa\xi}\cdot\fr{\pa r_1}{\pa\eta}\bigg|+\bigg|\lambda_1\fr{\pa^2r_1}{\pa\xi\pa\eta}\bigg| \nonumber \\[4pt]
&\leq K\delta(1+M\delta)^2\cdot e^{K\delta^2(1+M\delta)^2} +K\delta(1+M\delta)\cdot K\delta(1+M\delta)^4e^{K\delta^2(1+M\delta)^2} \nonumber \\[4pt]
&\leq K\delta(1+M\delta)^3e^{K\delta^2(1+M\delta)^2}.
\end{align}

Now we differentiate \eqref{4.56} with respect to $\xi$ to obtain
\begin{align}\label{4.81}
&\fr{\pa}{\pa\xi}\bigg(\fr{\pa U}{\pa\xi}(\xi,\eta)\bigg)=\fr{u_\xi+v_\xi}{\xi}-\fr{u+v}{\xi^2}+2\fr{\pa b_1}{\pa\xi} \nonumber \\[4pt]
&\quad +\int_{0}^\xi\bigg\{\bigg(\fr{u_{rr}+v_{rr}}{2t} +\fr{\pa^2b_1}{\pa r^2}\bigg)\cdot\bigg(\fr{\pa r_1}{\pa\xi}\bigg)^2 +\bigg(\fr{u_{r}+v_{r}}{2t} +\fr{\pa b_1}{\pa r}\bigg)\cdot\fr{\pa^2 r_1}{\pa\xi^2}\bigg\}\ {\rm d}t,
\end{align}
which,  together with \eqref{4.43}, \eqref{4.49}, \eqref{4.57}, \eqref{4.71}, \eqref{4.79} and \eqref{4.80},  gives
\begin{align}\label{4.82}
&\bigg|\fr{\pa^2 U}{\pa \xi^2}\bigg|\leq \fr{|u_\xi|+|v_\xi|}{\xi}+\fr{|u|+|v|}{\xi^2}+2\bigg|\fr{\pa b_1}{\pa\xi}\bigg| \nonumber \\[4pt]
&\quad +\int_{0}^\xi\bigg\{\bigg(\fr{|u_{rr}|+|v_{rr}|}{2t} +\bigg|\fr{\pa^2b_1}{\pa r^2}\bigg|\bigg)\cdot\bigg|\fr{\pa r_1}{\pa\xi}\bigg|^2 +\bigg(\fr{|u_{r}|+|v_{r}|}{2t} +\bigg|\fr{\pa b_1}{\pa r}\bigg|\bigg)\cdot\bigg|\fr{\pa^2 r_1}{\pa\xi^2}\bigg|\bigg\}\ {\rm d}t \nonumber \\[4pt]
\leq& 2M+M+K(1+M\delta)^3+ \int_{0}^\xi\bigg\{\bigg(Mt +K(1+M\delta)^4t\bigg)\cdot K\delta^2(1+M\delta)^2e^{K\delta^2(1+M\delta)^2}  \nonumber \\[4pt]
 &\quad  +\bigg(Mt +K(1+M\delta)^3t\bigg)\cdot K\delta(1+M\delta)^3e^{K\delta^2(1+M\delta)^2}\bigg\}\ {\rm d}t \nonumber \\[4pt]
\leq& 3M+K(1+M\delta)^3+K\delta^2(1+M\delta)^4e^{K\delta^2(1+M\delta)^2}.
\end{align}
Similar arguments apply  for $V$ and $W$ to show
\begin{align*}
\bigg|\fr{\pa^2 V}{\pa \xi^2}\bigg|&\leq 3M+K(1+M\delta)^3+K\delta^2(1+M\delta)^4e^{K\delta^2(1+M\delta)^2}, \\[4pt]
\bigg|\fr{\pa^2 W}{\pa \xi^2}\bigg|&\leq K(1+M\delta)^3+K\delta^2(1+M\delta)^4e^{K\delta^2(1+M\delta)^2},
\end{align*}
which, along with \eqref{4.82},  yields
\begin{align}\label{4.83}
\bigg|\fr{\pa^2 U}{\pa \xi^2}\bigg| + \bigg|\fr{\pa^2 V}{\pa \xi^2}\bigg| +\bigg|\fr{\pa^2 W}{\pa \xi^2}\bigg| &\leq 6M +K(1+M\delta)^3+K\delta^2(1+M\delta)^4e^{K\delta^2(1+M\delta)^2} \nonumber \\
&\leq 7M,
\end{align}
if $M$ and $\delta$ are chosen as above, i.e., $M\geq64K$ and $\delta\leq1/M$. We combine \eqref{4.71}, \eqref{4.74} and \eqref{4.83} and employ Lemma \ref{lem1} to finish the proof of Lemma \ref{lem2}.
\end{proof}

\section{Existence of solutions in the original $(x,y)$ plane}\label{s5}

In view of \eqref{4.1}, we see that Problem \ref{prob2} and Problem \ref{prob3} are equivalent, which says by Theorem \ref{thm3} that the boundary value problem \eqref{3.9} with \eqref{3.13}-\eqref{3.14} has a local classical solution. This section is devoted to constructing a smooth solution for Problem \ref{p1} by converting the solution in the $(t,r)$-plane to that in the original  $(x,y)$-plane.

Thanks to the coordinate transformation \eqref{3.3}, it is easy to derive
\begin{align}\label{5.1}
\left\{
\begin{array}{l}
\fr{\pa x}{\pa t}=\fr{\theta_y}{J},\\[4pt]
\fr{\pa y}{\pa t}=-\fr{\theta_x}{J},
\end{array}
\right. \quad
\left\{
\begin{array}{l}
\fr{\pa x}{\pa r}=\fr{\sin\omega\omega_y}{J},\\[4pt]
\fr{\pa y}{\pa r}=-\fr{\sin\omega\omega_x}{J},
\end{array}
\right.
\end{align}
where $J$ is given in \eqref{3.5}. We use \eqref{2.6}, \eqref{2.15} and \eqref{a3} to obtain
\begin{align}\label{5a1}
\theta_x&=\sin\beta\bar{\pa}^+\Xi+\sin\alpha\bar{\pa}^-\Xi \nonumber \\
&=(t\sin r-\sqrt{1-t^2}\cos r)\overline{U}+(t\sin r+\sqrt{1-t^2}\cos r)\overline{V},\nonumber \\
\theta_y&=-\cos\beta\bar{\pa}^+\Xi-\cos\alpha\bar{\pa}^-\Xi \nonumber\\
&=-(t\cos r+\sqrt{1-t^2}\sin r)\overline{U}-(t\cos r-\sqrt{1-t^2}\sin r)\overline{V},
\end{align}
and
\begin{align}\label{5a2}
\omega_x&=-\fr{\kappa+\sin^2\omega}{\cos^2\omega}[\sin\beta(\bar{\pa}^+\Xi+GH)-\sin\alpha(\bar{\pa}^-\Xi-GH)]
 \nonumber \\[4pt]
 &=-\fr{\kappa+1-t^2}{t^2}\bigg\{(t\sin r-\sqrt{1-t^2}\cos r)\overline{U} -(t\sin r+\sqrt{1-t^2}\cos r)\overline{V}+2t\sin rG\overline{H}\bigg\}, \nonumber \\[4pt]
\omega_y&=\fr{\kappa+\sin^2\omega}{\cos^2\omega}[\cos\beta(\bar{\pa}^+\Xi+GH)-\cos\alpha(\bar{\pa}^-\Xi-GH)] \nonumber
\\[4pt]
 &=\fr{\kappa+1-t^2}{t^2}\bigg\{(t\cos r+\sqrt{1-t^2}\sin r)\overline{U} -(t\cos r-\sqrt{1-t^2}\sin r)\overline{V}+2t\cos rG\overline{H}\bigg\}.
\end{align}
Then we have
\begin{align}
x_t&=-\fr{(t\cos r+\sqrt{1-t^2}\sin r)\overline{U}(t,r)+(t\cos r-\sqrt{1-t^2}\sin r)\overline{V}(t,r)}{2F(t)\{2\overline{U}(t,r)\overline{V}(t,r) +G(t)\overline{H}(t,r)[\overline{V}(t,r)-\overline{U}(t,r)]\}}t, \label{5.2} \\[4pt]
y_t&=-\fr{(t\sin r-\sqrt{1-t^2}\cos r)\overline{U}(t,r)+(t\sin r+\sqrt{1-t^2}\cos r)\overline{V}(t,r)}{2F(t)\{2\overline{U}(t,r)\overline{V}(t,r) +G(t)\overline{H}(t,r)[\overline{V}(t,r)-\overline{U}(t,r)]\}}t, \label{5.3}
\end{align}
and
\begin{align}
x_r&=\fr{(t\cos r+\sqrt{1-t^2}\sin r)\overline{U}-(t\cos r-\sqrt{1-t^2}\sin r)\overline{V}+2t\cos rG\overline{H}}{2t\sqrt{1-t^2}[2\overline{U}\cdot\overline{V}+G\overline{H}(\overline{V}-\overline{U})]},\label{5.4} \\[4pt]
y_r&=\fr{(t\sin r-\sqrt{1-t^2}\cos r)\overline{U}-(t\sin r+\sqrt{1-t^2}\cos r)\overline{V}+2t\sin rG\overline{H}}{2t\sqrt{1-t^2}[2\overline{U}\cdot\overline{V}+G\overline{H}(\overline{V}-\overline{U})]}.\label{5.5}
\end{align}
Thus we construct the functions $x=x(t,r)$ and $y=y(t,r)$ by solving the following equations
\begin{align*}
\left\{
\begin{array}{l}
\fr{\pa x}{\pa t}=-\fr{(t\cos r+\sqrt{1-t^2}\sin r)\overline{U}(t,r)+(t\cos r-\sqrt{1-t^2}\sin r)\overline{V}(t,r)}{2F(t)\{2\overline{U}(t,r)\overline{V}(t,r) +G(t)\overline{H}(t,r)[\overline{V}(t,r)-\overline{U}(t,r)]\}}t, \\[4pt]
x(0,r)=\hat{\theta}^{-1}(r),
\end{array}
\right.
\end{align*}
and
\begin{align*}
\left\{
\begin{array}{l}
\fr{\pa y}{\pa t}=-\fr{(t\sin r-\sqrt{1-t^2}\cos r)\overline{U}(t,r)+(t\sin r+\sqrt{1-t^2}\cos r)\overline{V}(t,r)}{2F(t)\{2\overline{U}(t,r)\overline{V}(t,r) +G(t)\overline{H}(t,r)[\overline{V}(t,r)-\overline{U}(t,r)]\}}t, \\[4pt]
y(0,r)=\varphi(\hat{\theta}^{-1}(r)),
\end{array}
\right.
\end{align*}
where $\hat{\theta}^{-1}$ denotes the inverse of $\hat{\theta}$, which exists by the strictly monotonic assumption of $\hat{\theta}$. Furthermore, the Jacobian of the map $(t,r)\mapsto(x,y)$ is
$$
j:=\fr{\pa(x,y)}{\pa(t,r)}=x_ty_r-x_ry_t =\fr{t}{2F(t)\{2\overline{U}(t,r)\overline{V}(t,r)+G\overline{H}(t,r)[\overline{V}(t,r)-\overline{U}(t,r)]\}},
$$
which is strictly less than zero when $t\in(0,\delta]$. Therefore, the map $(t,r)\mapsto(x,y)$ is an one-to-one mapping for $t\in(0,\delta]$. Thus we have the functions $t=t(x,y)$ and $r=r(x,y)$, and then define by \eqref{3.3}
\begin{align}\label{5.6}
\begin{array}{l}
\theta=r(x,y),\quad \omega=\arccos t(x,y), \quad H=\overline{H}(t(x,y),r(x,y)),\\
\alpha=r(x,y)+\arccos t(x,y),\quad \beta=r(x,y)-\arccos t(x,y).
\end{array}
\end{align}
We next check that the functions defined in \eqref{5.6} satisfy system \eqref{2.12}. Using \eqref{5.2}-\eqref{5.5}, one directly calculates
\begin{align}\label{5.7}
\bar{\pa}^+\theta+\fr{\cos^2\omega}{\sin^2\omega+\kappa}\bar{\pa}^+\omega &=\cos\alpha r_x+\sin\alpha r_y-\fr{t^2}{\sqrt{1-t^2}(\kappa+1-t^2)}(\cos\alpha t_x+\sin\alpha t_y) \nonumber \\[4pt]
&=\fr{1}{j}\bigg\{-\cos\alpha y_t+\sin\alpha x_t-\fr{t^2\sqrt{1-t^2}}{F(t)}(\cos\alpha y_r-\sin\alpha x_r)\bigg\} \nonumber \\[4pt]
&=\fr{1}{j}\cdot\fr{t^2}{F(t)[2\overline{U}\cdot\overline{V}+G\overline{H}(\overline{V} -\overline{U})]}(\sin\alpha\cos r-\cos\alpha\sin r)G\overline{H}\nonumber \\[4pt]
&=2t\sin(\alpha-r)G\overline{H}=\sin(2\omega)GH,
\end{align}
which means that the first equation of \eqref{2.12} holds. The second equation of \eqref{2.12} can be checked  analogously. For the third one, we find from \eqref{3.9} that
\begin{align}\label{5.8}
\bar{\pa}^0H&=\cos r(\overline{H}_tt_x+\overline{H}_rr_x)+\sin r(\overline{H}_tt_y+\overline{H}_rr_y) \nonumber \\[4pt]
&=\fr{1}{j}\bigg\{(\cos ry_r-\sin rx_r)\overline{H}_t-(\cos ry_t-\sin rx_t)\overline{H}_r\bigg\}  \nonumber \\[4pt]
&=\fr{1}{j}\bigg\{\fr{\overline{U}+\overline{V}}{2t[2\overline{U}\cdot\overline{V}+G\overline{H}(\overline{V} -\overline{U})]}\overline{H}_t+\fr{t\sqrt{1-t^2}(\overline{U}- \overline{V})}{2F[2\overline{U}\cdot\overline{V}+G\overline{H}(\overline{V} -\overline{U})]}\overline{H}_r\bigg\}\nonumber \\[4pt]
&=\fr{F(\overline{U}+\overline{V})}{t^2}\bigg\{\overline{H}_t +\fr{t^2\sqrt{1-t^2}(\overline{U}-\overline{V})}{F(\overline{U}+\overline{V})}\overline{H}_r\bigg\}=0.
\end{align}
Thus the proof of Theorem \ref{thm1} is completed.
\vspace{0.2cm}

In addition,  it follows by \eqref{5.6} that the fourth equation of \eqref{2.12} now is a linear equation, from which and the boundary condition $S(x,\varphi(x))=\hat{S}(x)$ we get the function $S=S(x,y)$. Similarly, we integrate the equation \eqref{a2} with $B(x,\varphi(x))=\hat{B}(x)$ to give the function $B=B(x,y)$. Here $\hat{S}(x)$ and $\hat{B}(x)$ are defined in \eqref{a6}. Hence we obtain the classical solution $(\theta, \omega, S, B)(x,y)$ of system \eqref{2.7} near the sonic curve $\Gamma$. Moreover, by the functions $(\theta, \omega, S, B)(x,y)$, one can define
\begin{align}\label{5.9}
\begin{array}{l}
c=\sqrt{\fr{2\kappa\sin^2\omega(x,y) B(x,y)}{\kappa+\sin^2\omega(x,y)}},\quad u=c(x,y)\fr{\cos\theta(x,y)}{\sin\omega(x,y)},\quad v=c(x,y)\fr{\sin\theta(x,y)}{\sin\omega(x,y)},\\[4pt]
\rho=\bigg(\fr{2\kappa B(x,y)\sin^2\omega(x,y)}{\gamma [\kappa +\sin^2\omega(x,y)]S(x,y)}\bigg)^{\fr{1}{\gamma-1}},\quad p=S(x,y)\bigg(\fr{2\kappa B(x,y)\sin^2\omega(x,y)}{\gamma [\kappa +\sin^2\omega(x,y)]S(x,y)}\bigg)^{\fr{\gamma}{\gamma-1}}.
\end{array}
\end{align}
It is not difficult to check that the functions $(\rho, u,v,p)(x,y)$ defined as above satisfy system \eqref{2.2} subject to the boundary condition \eqref{1.3}. Thus we finish the proof of Theorem \ref{thm2}.

\section*{Acknowledgements}

Yanbo Hu was supported by the Zhejiang Provincial Natural Science Foundation (No. LY17A010019) and National Science Foundation of China (No: 11301128). Jiequan Li was supported by the Natural Science Foundation of China (Nos: 11771054, 91852207) and Foundation of LCP.

\section*{Appendices}
\appendix

\section{The Tricomi equation}\label{sa}

In order to illustrate the methodology of this paper, we take the classical Tricomi equation and propose a similar problem.
The Tricomi equation is a second-order partial
differential equation of mixed hyperbolic-elliptic
type with the form,
\begin{align}\label{s1.1}
yu_{xx}+u_{yy}=0.
\end{align}
Equation \eqref{s1.1} is hyperbolic in the half plane $y<0$,
elliptic in the half plane $y>0$, and degenerates on the line $y=0$.
The characteristic equation of \eqref{s1.1} in $y<0$ is
$$
dx^2+ydy^2=0,
$$
which gives the explicit expression of characteristic curves,
$$
x\pm\fr{2}{3}(-y)^{\fr{3}{2}}=C
$$
for any constant $C$. These two families of characteristics
coincide  and  form cusps perpendicularly to the line $y=0$. See Figure 2.

\vspace{0.2cm}

The solution of \eqref{s1.1} is now well-understood, e.g., in \cite{Bi}. Below we just use our method for a hyperbolic degenerate problem with prescribed data on the degenerate line $y=0$.

\begin{figure}[htbp]
\begin{center}
\includegraphics[scale=0.58]{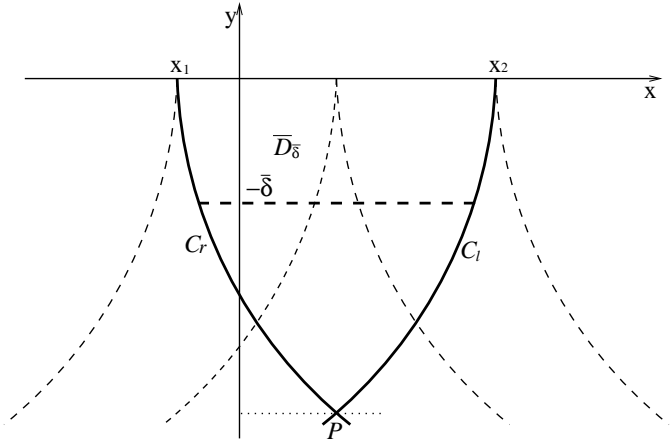}
\caption{\footnotesize Characteristics of Tricomi equation and region $\overline{D}_{\bar{\delta}}$.}
\end{center}
\end{figure}

\subsection{A hyperbolic degenerate problem}

We prescribe the boundary data on $y=0$
\begin{align}\label{s1.2}
u(x,0)=u_0(x),\quad u_y(x,0)=u_1(x),\ \ x\in[x_1,x_2].
\end{align}
The rightward characteristics starting from point $(x_1, 0)$ (denoted by $C_r$) and the leftward characteristics starting from point $(x_2, 0)$ (denoted by $C_l$) are, respectively, $x=x_1+\fr{2}{3}(-y)^{\fr{3}{2}}$ and $x=x_2-\fr{2}{3}(-y)^{\fr{3}{2}}$,
which intersect at the point $P(\fr{x_2-x_1}{2},-\sqrt[3]{\fr{9(x_2-x_1)^2}{16}})$.  Let $\bar{\delta}\leq\sqrt[3]{\fr{9(x_2-x_1)^2}{16}}$ be a positive constant. Denote
$$
\overline{D}_{\bar{\delta}}=\{(y,x)|\ -\bar{\delta}\leq y\leq0,\ x_1+\fr{2}{3}(-y)^{\fr{3}{2}}\leq x\leq x_2-\fr{2}{3}(-y)^{\fr{3}{2}}\}.
$$
See Figure 2. The basic  problem is
\begin{prob}\label{pT1a}
we look for a classical solution for The Tricomi equation \eqref{s1.1}
with the boundary condition \eqref{s1.2} in the region $\overline{D}_{\bar{\delta}}$ for some constant $\bar{\delta}>0$.
\end{prob}

In the context of strictly hyperbolic problems, this is the typical Goursat problem and the solution can be constructed with the standard method of characteristics \cite{Li-Yu}.  However, due to the degeneracy at $y=0$, the characteristic method  cannot be applied directly \cite{Han-Hong}. The method in the present study is ``nonlinear" in the sense that it can deal with nonlinear problems.   The theorem is stated as follows.

\begin{thm}\label{thmT2}
Assume $u_0(x)\in C^4([x_1,x_2])$ and $u_1(x)\in C^3([x_1,x_2])$. Then there exists a positive constant $\bar{\delta}\leq\sqrt[3]{\fr{9(x_2-x_1)^2}{16}}$ such that the boundary
problem \eqref{s1.1} \eqref{s1.2} has a classical solution in the region $\overline{D}_{\bar{\delta}}$.
\end{thm}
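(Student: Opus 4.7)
The plan is to transplant the three-step strategy (partial hodograph, weighted iteration, inversion) of Sections \ref{s3}--\ref{s5} into this linear setting. I would first reduce \eqref{s1.1} to a first-order characteristic system by setting
\[
U = u_y + \sqrt{-y}\, u_x, \qquad V = u_y - \sqrt{-y}\, u_x,
\]
so that $U|_{y=0}=V|_{y=0}=u_1(x)$. A direct differentiation and use of \eqref{s1.1} yields
\[
U_y - \sqrt{-y}\, U_x = \frac{U-V}{4y}, \qquad V_y + \sqrt{-y}\, V_x = \frac{V-U}{4y},
\]
in which the singular factor $(U-V)/y$ plays the role of the $(\overline U+\overline V)/t$ term in \eqref{3.9}.

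Next I would apply the partial hodograph transformation $(x,y)\mapsto(t,r)=(\sqrt{-y},\,x)$, flattening the Cauchy curve $y=0$ onto $t=0$ exactly as $(\cos\omega,\theta)$ did in Section \ref{s3}. Using $\partial_y=-\tfrac{1}{2t}\partial_t$ and $\partial_x=\partial_r$, the system rewrites as
\[
U_t + 2t^2 U_r = \frac{U-V}{2t}, \qquad V_t - 2t^2 V_r = -\frac{U-V}{2t},
\]
with $U(0,r)=V(0,r)=u_1(r)$. Since $U-V$ vanishes to order $t$ at $t=0$, the substitution $A=(U+V)/2$, $B=(U-V)/(2t)$ absorbs the singular factor and produces the closed desingularized system
\[
A_t + 2t^3 B_r = 0, \qquad B_t + 2t A_r = 0, \qquad A(0,r)=u_1(r),\ B(0,r)=u_0'(r),
\]
which encodes \eqref{s1.1} and the compatibility $u_{xy}=u_{yx}$ in the new variables (note $A=u_y$ and $B=u_x$). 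Following \eqref{4.1}, subtracting the boundary Taylor data by $\widetilde A=A-u_1(r)$, $\widetilde B=B-u_0'(r)$ gives the homogeneous conditions $\widetilde A(0,r)=\widetilde B(0,r)=0$ and $\widetilde A_t(0,r)=\widetilde B_t(0,r)=0$ matching \eqref{4.3}.

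The iteration argument of Section \ref{s4} then carries over in a much simpler form. I would work in the weighted class $\mathcal S_{\bar\delta}^M$ of $C^2$ pairs $(\widetilde A,\widetilde B)$ satisfying $\|\widetilde A/t^2\|_\infty+\|\widetilde B/t^2\|_\infty\le M$ together with the corresponding bounds on first and second $r$-derivatives, equipped with the metric $d$ of \eqref{4.6}. The integral operator $\mathcal T$ built by integrating along the two characteristic curves $dr/dt=\pm 2t^2$ is, by the same chain of estimates as in Lemmas \ref{lem1}--\ref{lem2}, a self-map and a strict contraction for $\bar\delta$ sufficiently small and $M$ sufficiently large; the resulting fixed point, whose regularity is upgraded via Arzel\`a--Ascoli, furnishes a classical $(A,B)$ on $0\le t\le\bar\delta$. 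Finally, $u$ is recovered by integrating the closed one-form $B\,dx+A\,dy$ from $(x_1,0)$ with $u(x_1,0)=u_0(x_1)$, closedness being exactly the second equation of the reduced system after changing coordinates back.

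The main obstacle I anticipate is precisely the vanishing of the two characteristic speeds $\pm 2t^2$ on the Cauchy curve $t=0$: the classical domain of dependence degenerates to a point there, so neither the standard method of characteristics nor a naive energy estimate closes. What rescues the argument is the weighted supremum norm $\|\cdot/t^2\|_\infty$, whose weight is tuned exactly to the vanishing order dictated by the homogenized boundary conditions \eqref{4.3}; this weight absorbs the singularity and converts the apparently singular source terms into bounded contributions in the iteration, just as in the full Euler case treated in the main body.
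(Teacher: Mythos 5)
Your reduction, partial hodograph $(x,y)\mapsto(\sqrt{-y},x)$, weighted metric, and fixed-point strategy are exactly those of the paper's Appendix A; the main structural steps line up one-to-one with the proof of Theorem \ref{thmT1}. Where you differ is in the choice of dependent variables. The paper works throughout with the Riemann invariants $(\widetilde R,\widetilde S,\widetilde u)$ and keeps the explicit singular source $(\widetilde R-\widetilde S)/(2t)$ in \eqref{s1.5}, subtracting the full linear Taylor jet $u_1\pm u_0' t$ in \eqref{s1.7} before iterating. You instead pass to $A=(U+V)/2=u_y$ and $B=(U-V)/(2t)=u_x$, observing (correctly) that the system then reads $A_t+2t^3B_r=0$, $B_t+2tA_r=0$ with no singular zero-order term, and that only the constant $t$-jet needs to be removed because $A_t(0,r)=B_t(0,r)=0$ automatically. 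This is a cleaner bookkeeping and a genuinely illuminating observation, and recovering $u$ from the closed one-form $B\,dx+A\,dy$ (closedness being the second equation of your system, which is just $u_{xy}=u_{yx}$) is a tidy alternative to the paper's extra transport equation for $\widetilde u$. You should be aware, though, that your $(A,B)$ system is \emph{not} diagonal, so ``integrating along $dr/dt=\pm 2t^2$'' is not a componentwise operation; one has to return to the Riemann invariants $R=A+tB$, $S=A-tB$, and there the source $dR/dt|_+=B=(R-S)/(2t)$ reappears unchanged. Alternatively, a naive vertical integration of the $(A,B)$ system loses one $r$-derivative per step and does not close in $\mathcal S_{\bar\delta}^{M}$. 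So the ``desingularization'' is conceptual rather than computational: the actual contraction estimate is the same one the paper carries out on $(\widetilde R,\widetilde S)$, with the weight $\|\cdot/t^2\|_\infty$ taming $(r-s)/(2t)$ exactly as you describe at the end. With that caveat made explicit, the argument is sound and delivers Theorem \ref{thmT2} by the same route, up to a change of variables.
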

\begin{rem}\label{remT1}
Since the Tricomi equation is a linear equation, it is not difficult to extend the solution in Theorem \ref{thmT2} to the whole region bounded by $x=x_1+\fr{2}{3}(-y)^{\fr{3}{2}}$, $x=x_2-\fr{2}{3}(-y)^{\fr{3}{2}}$ and $y=0$.
\end{rem}

To establish Theorem \ref{thmT2}, we rewrite the second-order equation \eqref{s1.1} to a hyperbolic system.
Let
$$
\overline{R}=u_y+\sqrt{-y}u_x,\quad \overline{S}=u_y-\sqrt{-y}u_x.
$$
Then, for smooth solutions, equation \eqref{s1.1} is equivalent to
\begin{align}\label{s1.3}
\left\{
\begin{array}{l}
 \overline{R}_y-\sqrt{-y}\overline{R}_x=\fr{\overline{R}-\overline{S}}{4y}, \\
 \overline{S}_y+\sqrt{-y}\overline{S}_x=\fr{\overline{S}-\overline{R}}{4y}, \\
 u_y+\sqrt{-y}u_x=\overline{R}.
\end{array}
\right.
\end{align}
Note that $\overline{R}-\overline{S}=2\sqrt{-y}u_x$ vanishes as the rate $\sqrt{-y}$, which means that the term $(\overline{R}-\overline{S})/y$ in system \eqref{s1.3} blows up in the order of $(-y)^{-\fr{1}{2}}$ when approaching the line $y=0$. Observing this singularity, we introduce the following transformation
\begin{align}\label{s1.4}
t=\sqrt{-y},\quad x=x,
\end{align}
which is an one-to-one mapping for $y\leq0$. Denote $\widetilde{R}(x,t)=\overline{R}(x,y)$, $\widetilde{S}(x,t)=\overline{S}(x,y)$ and $\widetilde{u}(x,t)=u(x,y)$. Then system \eqref{s1.3} can be rewritten as
\begin{align}\label{s1.5}
\left\{
\begin{array}{l}
 \widetilde{R}_t+2t^2\widetilde{R}_x=\fr{\widetilde{R}-\widetilde{S}}{2t}, \\
 \widetilde{S}_t-2t^2\widetilde{S}_x=\fr{\widetilde{S}-\widetilde{R}}{2t}, \\
 \widetilde{u}_t-2t^2\widetilde{u}_x=-2t\widetilde{R}.
\end{array}
\right.
\end{align}
Corresponding to \eqref{s1.2}, we have the boundary conditions for system \eqref{s1.5} as follows
\begin{align}\label{s1.6}
\begin{array}{l}
\quad (\widetilde{R},\widetilde{S},\widetilde{u})(x,0)=(u_1,u_1,u_0)(x),\\ (\widetilde{R}_t,\widetilde{S}_t,\widetilde{u}_t)(x,0)=(u_0',-u_0',0)(x),
\end{array}\ \ x\in[x_1,x_2].
\end{align}
To  better understand the singularity, we make the Taylor expansion for $(\widetilde{R},\widetilde{S},\widetilde{u})$ and introduce the following higher order error terms for the variables $(\widetilde{R},\widetilde{S},\widetilde{u})$
\begin{align}\label{s1.7}
\left\{
\begin{array}{l}
 R=\widetilde{R}(x,t)-u_1(x)-u_0'(x)t, \\
 S=\widetilde{S}(x,t)-u_1(x)+u_0'(x)t, \\
 W=\widetilde{u}(x,t)-u_1(x).
\end{array}
\right.
\end{align}
Then one has the system for variables $(R,S,W)$
\begin{align}\label{s1.8}
\left\{
\begin{array}{l}
 R_t+2t^2R_x=\fr{R-S}{2t}-2t^2(u_1'+u_0''t), \\
 S_t-2t^2S_x=\fr{S-R}{2t}+2t^2(u_1'-u_0''t), \\
 W_t-2t^2W_x=-2t(R+u_1),
\end{array}
\right.
\end{align}
with the following boundary conditions
\begin{align}\label{s1.9}
\begin{array}{l}
\quad (R,S,W)(x,0)=(0,0,0),\\
(R_t,S_t,W_t)(x,0)=(0,0,0),
\end{array}
\ \ x\in[x_1,x_2].
\end{align}

For system \eqref{s1.8}, the positive characteristic curve from $(x_1,0)$ and negative characteristic curve from $(x_2,0)$ are, respectively, $x=x_1+\fr{2}{3}t^3$ and $x=x_2-\fr{2}{3}t^3$, which intersect at point $(\fr{x_2-x_1}{2}, \sqrt[3]{\fr{3(x_2-x_1)}{4}})$. Let $\tilde{\delta}\leq\sqrt[3]{\fr{3(x_2-x_1)}{4}}$ be a small positive constant. Then we define a region in the plane $(x,t)$
$$
\widetilde{D}_{\tilde{\delta}}=\{(x,t)|\ 0\leq t\leq\tilde{\delta},\ x_1+\fr{2}{3}t^3\leq x\leq x_2-\fr{2}{3}t^3\}.
$$
Hence, Problem \ref{pT1a}  is reformulated to the following new problem in the $(x,t)$ plane. and the corresponding existence theorem holds in parallel.

\begin{prob}\label{pT1}
we look for a classical solution for system \eqref{s1.8}
with the boundary condition \eqref{s1.9} in the region $\widetilde{D}_{\tilde{\delta}}$ for some constant $\tilde{\delta}>0$.
\end{prob}

\begin{thm}\label{thmT1}
Assume $u_0(x)\in C^4([x_1,x_2])$ and $u_1(x)\in C^3([x_1,x_2])$, then there exists positive constants $\tilde{\delta}\leq\sqrt[3]{\fr{3(x_2-x_1)}{4}}$ such that the boundary
problem \eqref{s1.8} \eqref{s1.9} has a classical solution in the region $\widetilde{D}_{\tilde{\delta}}$.
\end{thm}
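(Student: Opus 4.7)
\medskip

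\noindent\textbf{Proof plan for Theorem \ref{thmT1}.}

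My plan is to mimic the iteration procedure of Section \ref{s4}, which is actually considerably simpler in the Tricomi case because system \eqref{s1.8} is linear. I introduce the function class $\mathcal{S}^M_{\tilde{\delta}}$ consisting of all $\mathbf{F}=(f_1,f_2,f_3)^T\in C^1(\widetilde{D}_{\tilde{\delta}},\mathbb{R}^3)$ satisfying the analogues of (P1)--(P4), namely $\mathbf{F}(x,0)=\mathbf{F}_t(x,0)=0$ together with the weighted bounds $\|\mathbf{F}/t^2\|_\infty+\|\pa_x\mathbf{F}/t^2\|_\infty+\|\pa_{xx}\mathbf{F}/t^2\|_\infty\leq M$, and endow the enveloping set $\mathcal{W}^M_{\tilde{\delta}}$ with the weighted metric $d(\mathbf{F},\mathbf{G})=\|(f_1-g_1)/t^2\|_\infty+\|(f_2-g_2)/t^2\|_\infty+\|(f_3-g_3)/t^2\|_\infty$ as in \eqref{4.6}. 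Note that the characteristic slopes $\pm 2t^2$ and $-2t^2$ of system \eqref{s1.8} vanish to order two at $t=0$, so $\widetilde{D}_{\tilde{\delta}}$ is a strong determinate domain for all small $\tilde{\delta}$.

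Given any $(r,s,w)\in\mathcal{S}^M_{\tilde{\delta}}$, I define the image $(R,S,W)=\mathcal{T}(r,s,w)$ by integrating \eqref{s1.8} along the characteristics emanating from $(x,t)=(\eta,0)$: explicitly
\begin{align*}
R(\xi,\eta)&=\int_0^\xi\bigg(\frac{r-s}{2t}-2t^2(u_1'+u_0''t)\bigg)\bigg(t,x^+_1(t;\xi,\eta)\bigg)\,{\rm d}t,\\
S(\xi,\eta)&=\int_0^\xi\bigg(\frac{s-r}{2t}+2t^2(u_1'-u_0''t)\bigg)\bigg(t,x^-_2(t;\xi,\eta)\bigg)\,{\rm d}t,\\
W(\xi,\eta)&=\int_0^\xi\bigl(-2t(r+u_1)\bigr)\bigl(t,x^-_3(t;\xi,\eta)\bigr)\,{\rm d}t,
\end{align*}
where $x^\pm_i$ are the characteristic curves through $(\xi,\eta)$ determined by $\dot x=\pm 2t^2$. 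The key observation justifying the integrals is that, since $r(x,0)=s(x,0)=0$ and $|r/t^2|, |s/t^2|\leq M$, the apparently singular factor $(r-s)/(2t)$ is in fact bounded by $Mt$; the remaining integrands are manifestly $O(t^2)$. This guarantees $R(0,\eta)=S(0,\eta)=W(0,\eta)=0$ and $R_t(0,\eta)=S_t(0,\eta)=W_t(0,\eta)=0$, so property (P1) is preserved.

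The core analytic step is the a priori estimate. Differentiating the integral formulas in $\eta$ once and twice and estimating $|\pa_\eta x^\pm_i|$, $|\pa^2_\eta x^\pm_i|$ exactly as in \eqref{4.49} and \eqref{4.53} (with the trivial simplification that the characteristic derivatives $\pa_x\lambda_i$ here are simply $0$, since $\lambda_i=\pm 2t^2$ depends only on $t$), I obtain bounds of the form
\begin{align*}
\bigg|\frac{R(\xi,\eta)}{\xi^2}\bigg|+\bigg|\frac{\pa_\eta R(\xi,\eta)}{\xi^2}\bigg|+\bigg|\frac{\pa^2_\eta R(\xi,\eta)}{\xi^2}\bigg|\leq \tfrac{M}{2}+K(1+M\tilde{\delta})
\end{align*}
and likewise for $S,W$, where $K$ depends only on the $C^3$-norms of $u_0,u_1$. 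Choosing $M\geq 64K$ and then $\tilde{\delta}\leq 1/M$ shows $\mathcal{T}$ maps $\mathcal{S}^M_{\tilde{\delta}}$ into itself. For contraction, because \eqref{s1.8} is linear, the difference $(R-\widehat R,S-\widehat S,W-\widehat W)$ satisfies a homogeneous version of the same integral identities with $(r-\widehat r,s-\widehat s,w-\widehat w)$ in place of $(r,s,w)$; the same weighted estimates produce
$d(\mathcal{T}\mathbf{F},\mathcal{T}\widehat{\mathbf{F}})\leq \nu\, d(\mathbf{F},\widehat{\mathbf{F}})$ with $\nu=\tfrac{1}{2}+K\tilde{\delta}<1$ for $\tilde{\delta}$ small. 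Banach's fixed point theorem on the complete space $(\mathcal{W}^M_{\tilde{\delta}},d)$ then yields a fixed point $\mathbf{F}^\star$, and the higher regularity shown in Lemma \ref{lem2}—adapted here to the trivial commutator $\pa_t\pm 2t^2\pa_x$—together with Arzela--Ascoli confirms $\mathbf{F}^\star\in\mathcal{S}^M_{\tilde{\delta}}$.

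The main obstacle, as in Section \ref{s4}, is not the fixed-point mechanism itself but the bookkeeping around the singular term $(R-S)/(2t)$: one must verify that differentiating the integrand under the integral sign still produces $O(t)$ integrands, which hinges on the cancellation $R-S=O(t^3)$ obtained from the two boundary conditions in \eqref{s1.9}. Once this is confirmed, the linearity of the problem makes the contraction argument routine, and the passage back to $u(x,y)=W(x,\sqrt{-y})+u_1(x)$ via \eqref{s1.4} and \eqref{s1.7} delivers the required classical solution on $\overline{D}_{\bar{\delta}}$ with $\bar\delta=\tilde\delta^2$, completing Theorem \ref{thmT2}.
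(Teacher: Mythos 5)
Your proposal follows exactly the scheme the paper uses in Appendix A: the same weighted Banach space with metric $\|\cdot/t^2\|_\infty$, the same integration along the explicit cubic characteristics $x_\pm(t)=\pm\tfrac{2}{3}t^3+\eta\mp\tfrac{2}{3}\xi^3$, the same contraction estimate driven by the observation that $(r-s)/(2t)=O(t)$ when $r,s=O(t^2)$, and the same Arzel\`a--Ascoli closing step. One small inaccuracy: you invoke a cancellation $R-S=O(t^3)$ as essential, but the iteration argument only needs $R-S=O(t^2)$ (which is what properties (P1)--(P2) guarantee for iterates), and that already makes $(R-S)/(2t)=O(t)$ integrable; the stronger $O(t^3)$ decay does hold for the fixed point itself, but it is not what the bookkeeping hinges on.
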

Since the map $(t,x)\rightarrow(y,x)$ is an one-to-one mapping for $y\leq0$, then Theorem \ref{thmT2} follows directly from Theorem \ref{thmT1}.

To prove Theorem \ref{thmT1}, we need to consider the problem in a refined class of functions due to the fact that the variables $(R, S, W)$ have very small magnitude near the line $t=0$. Let $\mathcal{S}_{\tilde{\delta}}^{\tilde{M}}$ be a function class which incorporates all continuously differentiable vector functions $\mathbf{F}=(f_1,f_2,f_3)^T: \widetilde{D}_{\tilde{\delta}}\rightarrow\mathbb{R}^3$ satisfying the following properties:
\begin{align*}
&{\rm (P1)}\ \mathbf{F}(x,0)=\mathbf{F}_t(x,0)=0, \rule{10cm}{0ex} \\[3pt]
&{\rm (P2)}\ \bigg\|\fr{\mathbf{F}(x,t)}{t^2}\bigg\|_\infty\leq \tilde{M}, \rule{10cm}{0ex}
\\[3pt]
&{\rm (P3)}\ \bigg\|\fr{\pa_x\mathbf{F}(x,t)}{t^2}\bigg\|_\infty\leq \tilde{M}, \rule{10cm}{0ex}
\\[3pt]
&{\rm (P4)}\ \pa_x\mathbf{F}(x,t)\ {\rm is\ Lipschitz\ continuous\ with\ respect\ to\ x\ and}\ \bigg\|\fr{\pa_{xx}\mathbf{F}(x,t)}{t^2}\bigg\|_\infty\leq \tilde{M},
\end{align*}
where $\tilde{M}$ is a positive constant and $\|\cdot\|_\infty$ denotes the supremum norm on the domain $\widetilde{D}_{\tilde{\delta}}$. Denote $\mathcal{W}_{\tilde{\delta}}^{\tilde{M}}$ the function class containing only continuous functions on $\widetilde{D}_{\tilde{\delta}}$ which satisfy the first two conditions ${\rm (P1)}$ and ${\rm (P2)}$. Obviously, $\mathcal{S}_{\tilde{\delta}}^{\tilde{M}}$ is a subset of $\mathcal{W}_{\tilde{\delta}}^{\tilde{M}}$ and both $\mathcal{S}_{\tilde{\delta}}^{\tilde{M}}$ and $\mathcal{W}_{\tilde{\delta}}^{\tilde{M}}$ are subsets of $C^0(\widetilde{D}_{\tilde{\delta}}, \mathbb{R}^3)$. For any elements $\mathbf{F}=(f_1,f_2,f_3)^T, \mathbf{G}=(g_1,g_2,g_3)^T$ in the set $\mathcal{W}_{\tilde{\delta}}^{\tilde{M}}$, we define the weighted metric as follows:
\begin{align}\label{s1.10}
d(\mathbf{F}, \mathbf{G}):=\bigg\|\fr{f_1-g_1}{t^2}\bigg\|_\infty+ \bigg\|\fr{f_2-g_2}{t^2}\bigg\|_\infty +\bigg\|\fr{f_3-g_3}{t^2}\bigg\|_\infty.
\end{align}
One can check that $(\mathcal{W}_{\tilde{\delta}}^{\tilde{M}},d)$ is a complete metric space, while the subset $(\mathcal{S}_{\tilde{\delta}}^{\tilde{M}},d)$ is not closed in the space $(\mathcal{W}_{\tilde{\delta}}^{\tilde{M}},d)$.

\subsection{The proof of Theorem \ref{thmT1}}

We use the fixed point method to prove Theorem \ref{thmT1} and divide the proof into three steps.
In Step 1, we construct an integration iteration mapping
in the function class $\mathcal{S}_{\tilde{\delta}}^{\tilde{M}}$. In Step 2, we demonstrate the mapping is a
contraction for some constants $\tilde{\delta}$ and $\tilde{M}$, which implies that the iteration sequence converge to a vector function in the limit. In Step 3, we show that this limit vector function also belongs to $\mathcal{S}_{\tilde{\delta}}^{\tilde{M}}$.
\vspace{0.2cm}

\noindent {\bf Step 1 (The iteration mapping).} Denote
$$
\fr{\rm d}{{\rm d}_\pm t}=\pa_t\pm 2t^2\pa_x.
$$
Let vector function $(r,s,w)(x,t)\in\mathcal{S}_{\tilde{\delta}}^{\tilde{M}}$. Then we consider the following system
\begin{align}\label{s1.11}
\left\{
\begin{array}{l}
\fr{\rm d}{{\rm d}_+ t}R=\fr{r-s}{2t}-2t^2(u_1'+u_0''t),\\[4pt]
\fr{\rm d}{{\rm d}_- t}S=\fr{s-r}{2t}+2t^2(u_1'-u_0''t), \\[4pt]
\fr{\rm d}{{\rm d}_- t}W=-2t(r+u_1).
\end{array}
\right.
\end{align}
The integral form of \eqref{s1.11} is
\begin{align}\label{s1.12}
\left\{
\begin{array}{l}
R(\eta,\xi)=\displaystyle\int_{0}^\xi\bigg(\fr{r(x_+(t),t)-s(x_+(t),t)}{2t}-2t^2(u_1'(x_+(t))+u_0''(x_+(t))t)\bigg)\ {\rm d}t,\\[10pt]
S(\eta,\xi)=\displaystyle\int_{0}^\xi\bigg(\fr{s(x_-(t),t)-r(x_-(t),t)}{2t}+2t^2(u_1'(x_-(t))-u_0''(x_-(t))t)\bigg)\ {\rm d}t, \\[10pt]
W(\eta,\xi)=-2\displaystyle\int_{0}^\xi t\{r(x_-(t),t)+u_1(x_-(t))\}\ {\rm d}t,
\end{array}
\right.
\end{align}
where
$$
x_\pm(t)=\pm\fr{2}{3}t^3+\eta\mp\fr{2}{3}\xi^3.
$$
Then \eqref{s1.12} determines an iteration mapping $\widetilde{\mathcal{T}}$:
\begin{align}\label{s1.13}
\widetilde{\mathcal{T}}
\left(
\begin{array}{l}
r \\
s \\
w
\end{array}
\right)=
\left(
\begin{array}{l}
R \\
S \\
W
\end{array}
\right),
\end{align}
and the existence of classical solutions for the boundary problem \eqref{s1.8} \eqref{s1.9} is
equivalent to the existence of fixed point for the mapping $\widetilde{\mathcal{T}}$ in the function class $\mathcal{S}_{\tilde{\delta}}^{\tilde{M}}$ for some constants $\tilde{\delta}$ and $\tilde{M}$.
\vspace{0.2cm}

\noindent {\bf Step 2 (Properties of the mapping).} For the mapping $\widetilde{\mathcal{T}}$, we have the following lemma.
\begin{lem}\label{lemT1}
Let the assumptions in Theorem \ref{thmT1} hold. Then there exists positive constants $\tilde{\delta}\leq\sqrt[3]{\fr{3(x_2-x_1)}{4}}, \tilde{M}$ and $0<\tilde{\nu}<1$ depending only on the $C^4$ norm of $u_0$ and $C^3$ norm of $u_1$ such that

\noindent (1) $\widetilde{\mathcal{T}}$ maps $\mathcal{S}^{\tilde{M}}_{\tilde{\delta}}$ into $\mathcal{S}_{\tilde{\delta}}^{\tilde{M}}$;

\noindent (2) For any vector functions $\mathbf{F}, \widehat{\mathbf{F}}$ in $\mathcal{S}_{\tilde{\delta}}^{\tilde{M}}$, there holds
\begin{align}\label{s1.14}
d\bigg(\widetilde{\mathcal{T}}(\mathbf{F}),\widetilde{\mathcal{T}}(\widehat{\mathbf{F}})\bigg)\leq\tilde{\nu} d(\mathbf{F},\widehat{\mathbf{F}}).
\end{align}
\end{lem}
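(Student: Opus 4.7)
The plan is to mimic the fixed-point argument of Lemma \ref{lem1}, in a substantially simpler setting: the iteration \eqref{s1.11} is linear in $(r,s,w)$, and the characteristics $x_\pm(t)=\eta\mp\tfrac{2}{3}\xi^3\pm\tfrac{2}{3}t^3$ are explicit with $\pa_\eta x_\pm\equiv 1$ and $\pa_\eta^2 x_\pm\equiv 0$. This eliminates the exponential-type bounds \eqref{4.49}, \eqref{a5}, \eqref{4.80} that dominated the Euler case. The entire argument hinges on the observation that the second-order vanishing $r,s=O(t^2)$ encoded in the class $\mathcal{S}_{\tilde{\delta}}^{\tilde{M}}$ renders the apparently singular integrand $(r-s)/(2t)$ uniformly of order $t$; the singularity of \eqref{s1.11} on $t=0$ is thus artificial from the viewpoint of the a priori estimates.

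For the invariance $\widetilde{\mathcal{T}}(\mathcal{S}_{\tilde{\delta}}^{\tilde{M}})\subseteq\mathcal{S}_{\tilde{\delta}}^{\tilde{M}}$, the equalities $R(\eta,0)=S(\eta,0)=W(\eta,0)=0$ are built into \eqref{s1.12}, and the further identities $R_\xi(\eta,0)=S_\xi(\eta,0)=W_\xi(\eta,0)=0$ follow by Leibniz' rule: the boundary contribution at $t=\xi$ vanishes as $\xi\to 0$ because $(r-s)(x_+(\xi),\xi)/(2\xi)=O(\xi)$, and the interior contribution carries the prefactor $\pa_\xi x_\pm(t)=\mp 2\xi^2$. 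For (P2)--(P4), using $|r|,|s|,|w|\le\tilde{M}t^2$ together with the analogous bounds on $\pa_x$ and $\pa_{xx}$ (made accessible by $\pa_\eta x_\pm\equiv 1$, so that differentiation in $\eta$ under the integral simply replaces $r,s$ by $r_x,s_x$ and then by $r_{xx},s_{xx}$), direct integration of \eqref{s1.12} gives
\begin{equation*}
\Bigl|\frac{R}{\xi^2}\Bigr|,\ \Bigl|\frac{R_\eta}{\xi^2}\Bigr|,\ \Bigl|\frac{R_{\eta\eta}}{\xi^2}\Bigr|\le\frac{\tilde{M}}{2}+C,
\end{equation*}
with $C$ depending only on $\|u_0\|_{C^4}+\|u_1\|_{C^3}$. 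Choosing $\tilde{M}\ge 2C$ closes all three properties for $R$, and the same estimates apply to $S$ and $W$.

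The contraction \eqref{s1.14} is the cleanest part of the argument: linearity forces all data-dependent terms to cancel in the differences, leaving
\begin{equation*}
R-\widehat{R}=\int_0^\xi\frac{(r-\hat{r})-(s-\hat{s})}{2t}\,{\rm d}t,\qquad W-\widehat{W}=-2\int_0^\xi t\,(r-\hat{r})\,{\rm d}t,
\end{equation*}
(evaluated along $x_+$, respectively $x_-$) and analogously for $S-\widehat{S}$. Using $|r-\hat{r}|\le\|(r-\hat{r})/t^2\|_\infty\,t^2$ and the corresponding bound for $s-\hat{s}$, direct integration yields $\|(R-\widehat{R})/\xi^2\|_\infty+\|(S-\widehat{S})/\xi^2\|_\infty\le\tfrac{1}{2}d(\mathbf{F},\widehat{\mathbf{F}})$ and $\|(W-\widehat{W})/\xi^2\|_\infty\le\tfrac{\tilde{\delta}^2}{2}d(\mathbf{F},\widehat{\mathbf{F}})$, hence
\begin{equation*}
d(\widetilde{\mathcal{T}}\mathbf{F},\widetilde{\mathcal{T}}\widehat{\mathbf{F}})\le\Bigl(\frac{1}{2}+\frac{\tilde{\delta}^2}{2}\Bigr)d(\mathbf{F},\widehat{\mathbf{F}}),
\end{equation*}
which is strictly less than $d(\mathbf{F},\widehat{\mathbf{F}})$ for any $\tilde{\delta}<1$.

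The only delicate step I foresee is the verification that the boundary condition $\mathbf{F}_t(x,0)=0$ is correctly preserved under $\widetilde{\mathcal{T}}$, since this is precisely what justifies treating $(r-s)/(2t)$ as a bounded quantity throughout and hence keeps the entire construction self-consistent. Beyond that, no commutator identities, no smallness of the product $\tilde{M}\tilde{\delta}$, and no Gronwall-type arguments are required, in marked contrast to the Euler setting.
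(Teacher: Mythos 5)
Your approach is identical to the paper's: linearity of \eqref{s1.11} makes the inhomogeneous data cancel in the difference $R-\hat R$, the explicit characteristics with $\pa_\eta x_\pm\equiv 1$ reduce the $\eta$-derivative estimates to pointwise substitution, and property (P2) renders the formally singular integrand $(r-s)/(2t)$ of size $O(\tilde{M}t)$, so no Gronwall or smallness of $\tilde{M}\tilde{\delta}$ is needed. The one slip is in the invariance bound: you claim $|R/\xi^2|\le\tilde{M}/2+C$ for each component, but (P2) requires the \emph{sum} $(|R|+|S|+|W|)/\xi^2\le\tilde{M}$, which that bound cannot deliver for any $\tilde{M}$. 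The paper instead uses the sum estimate $|r|+|s|\le\tilde{M}t^2$ to get $|R/\xi^2|\le\tilde{M}/4+\tfrac{2}{3}K\tilde{\delta}$ (and likewise for $S$), together with $|W/\xi^2|\le\tilde{M}\tilde{\delta}^2+K$, yielding $(\tfrac12+\tilde{\delta}^2)\tilde{M}+(1+\tfrac43\tilde{\delta})K\le\tfrac{11}{12}\tilde{M}$ once $\tilde{M}=10K$ and $\tilde{\delta}\le\tfrac12$. So $\tilde{\delta}$ must be taken small as well as $\tilde{M}$ large, though — as you correctly emphasize — only $\tilde{\delta}$ itself and not $\tilde{M}\tilde{\delta}$, which is the genuine structural simplification over the Euler case. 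Your contraction estimate and the verification that $R_\xi(\eta,0)=S_\xi(\eta,0)=W_\xi(\eta,0)=0$ are exactly the paper's.
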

\begin{proof}
Let $\mathbf{F}=(r,s,w)^T$ and $\hat{\mathbf{F}}=(\hat{r},\hat{s}, \hat{w})^T$ be two elements in $\mathcal{S}_{\tilde{\delta}}^{\tilde{M}}$, where the constants $\tilde{\delta}$ and $\tilde{M}$ will be determined later. Denote $\mathbf{G}=\overline{\mathcal{T}}(\mathbf{F})=(R,S,W)^T$ and $\widehat{\mathbf{G}}=\overline{\mathcal{T}}(\hat{\mathbf{F}})=(\hat{R},\hat{S},\hat{W})^T$. By the properties of $\mathcal{S}_{\tilde{\delta}}^{\tilde{M}}$, one has
\begin{align}\label{s1.15}
\begin{array}{r}
|r(x,t)|+|s(x,t)|+|w(x,t)|\leq \tilde{M}t^2,\\
|r_x(x,t)|+|s_x(x,t)|+|w_x(x,t)|\leq \tilde{M}t^2,\\
|r_{xx}(x,t)|+|s_{xx}(x,t)|+|w_{xx}(x,t)|\leq \tilde{M}t^2.
\end{array}
\end{align}
Denote
$$
K=1+\|u_1\|_{C^3([x_1,x_2])}+\sqrt[3]{\fr{3(x_2-x_1)}{4}}\|u_0\|_{C^4([x_1,x_2])}.
$$
It follows that
\begin{align*}
|R(\eta,\xi)|&\leq\int_{0}^\xi\bigg(\fr{\tilde{M}}{2}t+2Kt^2\bigg)\ {\rm d}t\leq \bigg(\fr{\tilde{M}}{4}+\fr{2K}{3}\tilde{\delta}\bigg)\xi^2,\\[4pt]
|S(\eta,\xi)|&\leq\int_{0}^\xi\bigg(\fr{\tilde{M}}{2}t+2Kt^2\bigg)\ {\rm d}t\leq \bigg(\fr{\tilde{M}}{4}+\fr{2K}{3}\tilde{\delta}\bigg)\xi^2, \\[4pt]
|W(\eta,\xi)|&\leq2\int_{0}^\xi t(\tilde{M}t^2+K)\ {\rm d}t\leq (\tilde{M}\tilde{\delta}^2+K)\xi^2.
\end{align*}
Hence we have
\begin{align}\label{s1.16}
\fr{|R(\eta,\xi)|+|S(\eta,\xi)|+|W(\eta,\xi)|}{\xi^2}\leq\bigg(\fr{1}{2}+\tilde{\delta}^2\bigg)\tilde{M} +\bigg(1+\fr{4}{3}\tilde{\delta}\bigg)K.
\end{align}

Noting the fact $\fr{\pa x_\pm}{\pa \eta}=1$, we differentiate system \eqref{s1.12} with respect to $\eta$ to get
\begin{align*}
\fr{\pa R}{\pa \eta}(\eta,\xi) &=\displaystyle\int_{0}^\xi\bigg(\fr{r_x(x_+(t),t)-s_x(x_+(t),t)}{2t}-2t^2(u_1''(x_+(t))+u_0'''(x_+(t))t)\bigg)\ {\rm d}t,\\[4pt]
\fr{\pa S}{\pa \eta}(\eta,\xi) &=\displaystyle\int_{0}^\xi\bigg(\fr{s_x(x_-(t),t)-r_x(x_-(t),t)}{2t}+2t^2(u_1''(x_-(t))-u_0'''(x_-(t))t)\bigg)\ {\rm d}t, \\[4pt]
\fr{\pa W}{\pa \eta}(\eta,\xi)&=-2\displaystyle\int_{0}^\xi t\{r_x(x_-(t),t)+u_1'(x_-(t))\}\ {\rm d}t,
\end{align*}
subsequently,
\begin{align*}
\fr{\pa^2 R}{\pa \eta^2}(\eta,\xi) &=\displaystyle\int_{0}^\xi\bigg(\fr{r_{xx}(x_+(t),t)-s_{xx}(x_+(t),t)}{2t} -2t^2(u_1'''(x_+(t))+u_0''''(x_+(t))t)\bigg)\ {\rm d}t,\\[4pt]
\fr{\pa^2 S}{\pa \eta^2}(\eta,\xi) &=\displaystyle\int_{0}^\xi\bigg(\fr{s_{xx}(x_-(t),t)-r_{xx}(x_-(t),t)}{2t} +2t^2(u_1'''(x_-(t))-u_0''''(x_-(t))t)\bigg)\ {\rm d}t, \\[4pt]
\fr{\pa^2 W}{\pa \eta^2}(\eta,\xi) &=-2\displaystyle\int_{0}^\xi t\{r_{xx}(x_-(t),t)+u_1''(x_-(t))\}\ {\rm d}t.
\end{align*}
Therefore we obtain
\begin{align}\label{s1.17}
\fr{|R_\eta(\eta,\xi)|+|S_\eta(\eta,\xi)|+|W_\eta(\eta,\xi)|}{\xi^2}&\leq\bigg(\fr{1}{2} +\tilde{\delta}^2\bigg)\tilde{M} +\bigg(1+\fr{4}{3}\tilde{\delta}\bigg)K, \nonumber \\[4pt]
\fr{|R_{\eta\eta}(\eta,\xi)|+|S_{\eta\eta}(\eta,\xi)|+|W_{\eta\eta}(\eta,\xi)|}{\xi^2} &\leq\bigg(\fr{1}{2}+\tilde{\delta}^2\bigg)\tilde{M} +\bigg(1+\fr{4}{3}\tilde{\delta}\bigg)K.
\end{align}
By choosing $\tilde{M}=10K$ and $\tilde{\delta}=\min\{\fr{1}{2},\sqrt[3]{\fr{3(x_2-x_1)}{4}}\}$, we observe
$$
\bigg(\fr{1}{2}+\tilde{\delta}^2\bigg)\tilde{M} +\bigg(1+\fr{4}{3}\tilde{\delta}\bigg)K\leq\fr{11}{12}\tilde{M}<\tilde{M}.
$$
We combine with \eqref{s1.16} and \eqref{s1.17} to see that Properties (P2)-(P4) are preserved by the mapping $\widetilde{\mathcal{T}}$ for $\tilde{\delta}, \tilde{M}$ chosen above. To determine $\widetilde{\mathcal{T}}(\mathbf{F})\in\mathcal{S}^{\tilde{M}}_{\tilde{\delta}}$, we first find by \eqref{s1.12} that $R(\eta,0)=S(\eta, 0)=W(\eta, 0)=0$. Then it suffices to show that $R_\xi(\eta,0)=S_\xi(\eta, 0)=W_\xi(\eta, 0)=0$.
Differentiating system \eqref{s1.12} with respect to $\xi$ leads to
\begin{align}\label{s1.18}
\fr{\pa R}{\pa \xi}(\eta,\xi)&=\fr{r(\eta,\xi)-s(\eta,\xi)}{2\xi}-2\xi^2(u_1'(\eta)+u_0''(\eta)\xi) \nonumber \\[4pt]
&\quad -2\xi^2 \displaystyle\int_{0}^\xi\bigg(\fr{r_x(x_+(t),t)-s_x(x_+(t),t)}{2t}-2t^2(u_1''(x_+(t))+u_0'''(x_+(t))t)\bigg)\ {\rm d}t,\nonumber \\[4pt]
\fr{\pa S}{\pa \xi}(\eta,\xi)&=\fr{s(\eta,\xi)-r(\eta,\xi)}{2\xi}+2\xi^2(u_1'(\eta)-u_0''(\eta)\xi)  \\[4pt]
&\quad +2\xi^2 \displaystyle\int_{0}^\xi\bigg(\fr{s_x(x_-(t),t)-r_x(x_-(t),t)}{2t}+2t^2(u_1''(x_-(t))-u_0'''(x_-(t))t)\bigg)\ {\rm d}t, \nonumber \\[4pt]
\fr{\pa W}{\pa \xi}(\eta,\xi)&=-2\xi\{r(\eta,\xi)+u_1(\eta)\} -4\xi^2\displaystyle\int_{0}^\xi t\{r_x(x_-(t),t)+u_1'(x_-(t))\}\ {\rm d}t. \nonumber
\end{align}
It is obvious by \eqref{s1.15} and \eqref{s1.18} that $R_\xi(\eta,0)=S_\xi(\eta, 0)=W_\xi(\eta, 0)=0$, which indicates that $\widetilde{\mathcal{T}}$ do map $\mathcal{S}^{\tilde{M}}_{\tilde{\delta}}$ onto itself.

We now establish \eqref{s1.14} for $\tilde{\nu}=\fr{3}{4}$. For $(\hat{R},\hat{S},\hat{W})$, it follows that
\begin{align}\label{s1.19}
\left\{
\begin{array}{l}
\fr{\rm d}{{\rm d}_+ t}\hat{R}=\fr{\hat{r}-\hat{s}}{2t}-2t^2(u_1'+u_0''t),\\[4pt]
\fr{\rm d}{{\rm d}_- t}\hat{S}=\fr{\hat{s}-\hat{r}}{2t}+2t^2(u_1'-u_0''t), \\[4pt]
\fr{\rm d}{{\rm d}_- t}\hat{W}=-2t(\hat{r}+u_1).
\end{array}
\right.
\end{align}
Subtracting \eqref{s1.19} from \eqref{s1.11} gives
\begin{align}\label{s1.20}
\left\{
\begin{array}{l}
\fr{\rm d}{{\rm d}_+ t}(R-\hat{R})=\fr{(r-\hat{r})-(s-\hat{s})}{2t},\\[4pt]
\fr{\rm d}{{\rm d}_- t}(S-\hat{S})=\fr{(s-\hat{s})-(r-\hat{r})}{2t}, \\[4pt]
\fr{\rm d}{{\rm d}_- t}(W-\hat{W})=-2t(r-\hat{r}),
\end{array}
\right.
\end{align}
from which one arrives at
\begin{align*}
|R-\hat{R}|&\leq\int_{0}^\xi\fr{|r-\hat{r}|+|s-\hat{s}|}{2t}\ {\rm d}t\leq\fr{1}{4}\xi^2d(\mathbf{F},\hat{\mathbf{F}}),\\[4pt]
|S-\hat{S}|&\leq\int_{0}^\xi\fr{|r-\hat{r}|+|s-\hat{s}|}{2t}\ {\rm d}t\leq\fr{1}{4}\xi^2d(\mathbf{F},\hat{\mathbf{F}}), \\[4pt]
|W-\hat{W}|&\leq\int_{0}^\xi2t|r-\hat{r}|\ {\rm d}t\leq \xi^4d(\mathbf{F},\hat{\mathbf{F}}).
\end{align*}
Then we obtain
\begin{align*}
d\bigg(\widetilde{\mathcal{T}}(\mathbf{F}),\widetilde{\mathcal{T}}(\widehat{\mathbf{F}})\bigg) &=\bigg\|\fr{R-\hat{R}}{\xi^2}\bigg\|_{\infty} +\bigg\|\fr{S-\hat{S}}{\xi^2}\bigg\|_{\infty} +\bigg\|\fr{W-\hat{W}}{\xi^2}\bigg\|_{\infty} \\[4pt]
&\leq \bigg(\fr{1}{2}+\tilde{\delta}^2\bigg)d(\mathbf{F},\hat{\mathbf{F}}) \leq\fr{3}{4}d(\mathbf{F},\hat{\mathbf{F}}),
\end{align*}
which means that $\widetilde{\mathcal{T}}$ is a contraction under the metric $d$.
\end{proof}

\noindent {\bf Step 3 (Properties of the limit function).}
Since $(\mathcal{S}^{\tilde{M}}_{\tilde{\delta}},d)$ is not a closed subset in the complete space $(\mathcal{W}^{\tilde{M}}_{\tilde{\delta}},d)$, we need to confirm that the limit vector function of the iteration sequence $\{\mathbf{F}^{(n)}\}$, defined by $\mathbf{F}^{(n)}=\widetilde{\mathcal{T}}\mathbf{F}^{(n-1)}$, is in $\mathcal{S}^{\tilde{M}}_{\tilde{\delta}}$. This follows directly from Arzela-
Ascoli Theorem and the following lemma.
\begin{lem}\label{lemT2}
With the assumptions in Theorem \ref{thmT1}, the iteration sequence $\{\mathbf{F}^{(n)}\}$ has the property that $\{\pa_t \mathbf{F}^{(n)}(x,t)\}$ and $\{\pa_x\mathbf{F}^{(n)}(x,t)\}$ are uniformly Lipschitz continuous on $\widetilde{D}_{\tilde{\delta}}$.
\end{lem}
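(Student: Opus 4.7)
The plan is to mirror the three-step argument of Lemma \ref{lem2} in the main text, with the significant simplification that system \eqref{s1.11} is linear. Set $(R,S,W)^T = \widetilde{\mathcal{T}}(r,s,w)^T$ with $(r,s,w)^T \in \mathcal{S}^{\tilde M}_{\tilde\delta}$ equipped, by inductive hypothesis starting from $\mathbf{F}^{(0)}=0$, with the additional estimates $|r_t|+|s_t|+|w_t| \leq 2\tilde M \xi$ and $|r_{tx}|+|s_{tx}|+|w_{tx}| \leq 2\tilde M \xi$. The aim is to show that these two bounds propagate to $(R,S,W)^T$ and additionally that $|R_{\xi\xi}|+|S_{\xi\xi}|+|W_{\xi\xi}| \leq C\tilde M$. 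Together with the (P4) bound on $\partial_{xx}\mathbf{F}^{(n)}$, this yields uniform Lipschitz continuity of both $\partial_t \mathbf{F}^{(n)}$ and $\partial_x \mathbf{F}^{(n)}$, after which Arzela--Ascoli extracts a $C^1$-convergent subsequence whose limit lies in $\mathcal{S}^{\tilde M}_{\tilde\delta}$.

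For the first step, bounding $R_\xi, S_\xi, W_\xi$, I would work directly from the explicit formulas \eqref{s1.18}. The leading term $(r(\eta,\xi)-s(\eta,\xi))/(2\xi)$ is controlled by $\tilde M \xi$ via property (P2), the algebraic terms in $u_0',u_0'',u_1'$ contribute $O(\xi^2)$, and the integral term is of order $\xi^3$ by (P3). Summing and using $\tilde M=10K$, $\tilde\delta\le 1/2$ from Lemma \ref{lemT1} gives $|R_\xi|+|S_\xi|+|W_\xi| \leq 2\tilde M \xi$. The second step bounds the mixed derivatives $R_{\xi\eta}, S_{\xi\eta}, W_{\xi\eta}$ by the same calculation: differentiating \eqref{s1.18} in $\eta$ produces an integrand of identical structure with $r_x, s_x, u_1'', u_0'''$ in place of $r, s, u_1', u_0''$, so (P3) in place of (P2) yields the analogous bound $2\tilde M \xi$.

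The main obstacle is the third step, bounding the second $\xi$-derivatives. Differentiating \eqref{s1.18} once more in $\xi$ produces the critical combination
\[
\frac{\partial}{\partial\xi}\bigg(\frac{r(\eta,\xi)-s(\eta,\xi)}{2\xi}\bigg) = \frac{r_t(\eta,\xi)-s_t(\eta,\xi)}{2\xi} - \frac{r(\eta,\xi)-s(\eta,\xi)}{2\xi^2}.
\]
The second summand is bounded by $\tilde M$ directly via (P2). The first summand, however, requires a pointwise bound of the shape $|r_t|/\xi \leq 2\tilde M$ on the \emph{input} iterate, which is not among (P1)--(P4). This is exactly where the inductive hypothesis becomes essential: the bound in question is precisely the output of Step 1 applied to the previous iterate, so it holds for $(r,s,w)^T$ by induction. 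The remaining contributions from differentiating the polynomial term $-2\xi^2(u_1'+u_0''\xi)$ in $\xi$ and from differentiating the integral (which brings out a boundary contribution at $t=\xi$ plus an interior integral involving $r_x,s_x,r_{xx},s_{xx}$) are routinely estimated by constants times $K$ and $K(1+\tilde M\tilde\delta)$ using (P3), (P4), and the already-established $R_{\xi\eta}$-type bound. Choosing $\tilde M$ and $\tilde\delta$ as in Lemma \ref{lemT1} then closes the induction with a universal constant analogous to the ``$7M$'' of \eqref{4.83}, completing the proof.
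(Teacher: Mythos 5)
Your proof follows the same three-step structure as the paper's (first $\partial_t\mathbf{F}^{(n)}$, then $\partial_{tx}\mathbf{F}^{(n)}$, then $\partial_{tt}\mathbf{F}^{(n)}$), working directly from the explicit formulas \eqref{s1.18}. Your central observation is exactly right and worth flagging as a genuine clarification of the paper's exposition: when the paper's Step~3 invokes \eqref{s1.21} to control $(r_\xi - s_\xi)/\xi$, it is silently using a bound on the $\xi$-derivative of the \emph{input} $(r,s,w)$, which is not among the properties (P1)--(P4) defining $\mathcal{S}^{\tilde M}_{\tilde\delta}$; that bound is only available because $(r,s,w)$ is itself an iterate and Step~1 was already applied to the preceding one, with $\mathbf{F}^{(0)}=0$ starting the induction. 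You make this inductive bootstrap explicit, which the paper leaves implicit. The one minor quibble is the Step~1 constant: with $\tilde M=10K$ and $\tilde\delta\le 1/2$ the paper obtains $|R_\xi|\le\frac{7}{10}\tilde M\xi$, $|S_\xi|\le\frac{7}{10}\tilde M\xi$, $|W_\xi|\le\frac{9}{10}\tilde M\xi$, so the sum is roughly $2.3\tilde M\xi$ rather than the $2\tilde M\xi$ you claim; the paper records $3\tilde M\xi$ in \eqref{s1.21} as the propagating bound and this feeds into the $13\tilde M$ of \eqref{s1.23}. This is pure bookkeeping and does not affect the structure or validity of your argument.
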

\begin{proof}
Let $(r,s,w)^T\in\mathcal{S}^{\tilde{M}}_{\tilde{\delta}}$. Then we obtain by Lemma \ref{lemT1} that
$(R,S,W)^T=\widetilde{\mathcal{T}}(r,s,w)^T$ also in $\mathcal{S}^{\tilde{M}}_{\tilde{\delta}}$. We divide the proof of this lemma into three steps.

We first prove $|R_t|+|S_t|+|W_t|\leq 3\tilde{M}t$. It suggests by the first equation of \eqref{s1.18} that
\begin{align*}
\bigg|\fr{\pa R}{\pa \xi}(\eta,\xi)\bigg|&\leq\fr{\tilde{M}}{2}\xi+2K\xi^2 +2\xi^2 \displaystyle\int_{0}^\xi\bigg(\fr{\tilde{M}}{2}t+2Kt^2\bigg)\ {\rm d}t \\[4pt]
& \leq\bigg(\fr{1}{2}+\fr{\tilde{\delta}}{5}+\fr{\tilde{\delta}^3}{2}+\fr{\tilde{\delta}^4}{15}\bigg)\tilde{M}\xi \leq\fr{7}{10}\tilde{M}\xi.
\end{align*}
Similarly one has $|R_\xi|\leq7\tilde{M}\xi/10$. Moreover, for $|W_\xi|$, we have by the third equation of \eqref{s1.18}
\begin{align*}
\bigg|\fr{\pa W}{\pa \xi}(\eta,\xi)\bigg|&\leq2\xi\{\tilde{M}\tilde{\delta}^2+K\} +4\xi^2\displaystyle\int_{0}^\xi t\{\tilde{M}t^2+K\}\ {\rm d}t \\[4pt]
&\leq \bigg(2\tilde{\delta}^2+\fr{1}{5}+2\tilde{\delta}^5+\fr{\tilde{\delta}^3}{5}\bigg)\tilde{M}\xi \leq\fr{9}{10}\tilde{M}\xi.
\end{align*}
Thus we get
\begin{align}\label{s1.21}
 \bigg|\fr{\pa R}{\pa \xi}(\eta,\xi)\bigg|+\bigg|\fr{\pa S}{\pa \xi}(\eta,\xi)\bigg| +\bigg|\fr{\pa W}{\pa \xi}(\eta,\xi)\bigg|\leq 3\tilde{M}\xi.
\end{align}

We next prove $|R_{tx}|+|S_{tx}|+|W_{tx}|\leq 3\tilde{M}t$. Differentiating system \eqref{s1.18} with respect to $\eta$ yields
\begin{align*}
&\fr{\pa^2 R}{\pa\eta\pa \xi}(\eta,\xi)=\fr{r_\eta(\eta,\xi) -s_\eta(\eta,\xi)}{2\xi}-2\xi^2(u_1''(\eta)+u_0'''(\eta)\xi) \nonumber \\[4pt]
&\quad  -2\xi^2 \displaystyle\int_{0}^\xi\bigg(\fr{r_{xx}(x_+(t),t)-s_{xx}(x_+(t),t)}{2t}-2t^2(u_1'''(x_+(t))+u_0''''(x_+(t))t)\bigg)\ {\rm d}t,\nonumber \\[4pt]
&\fr{\pa^2 S}{\pa\eta\pa \xi}(\eta,\xi) =\fr{s_\eta(\eta,\xi)-r_\eta(\eta,\xi)}{2\xi}+2\xi^2(u_1''(\eta)-u_0'''(\eta)\xi)  \\[4pt]
&\quad +2\xi^2 \displaystyle\int_{0}^\xi\bigg(\fr{s_{xx}(x_-(t),t)-r_{xx}(x_-(t),t)}{2t}+2t^2(u_1'''(x_-(t))-u_0''''(x_-(t))t)\bigg)\ {\rm d}t, \nonumber \\[4pt]
&\fr{\pa^2 W}{\pa\eta\pa \xi}(\eta,\xi)=-2\xi\{r_\eta(\eta,\xi)+u_1'(\eta)\} -4\xi^2\displaystyle\int_{0}^\xi t\{r_{xx}(x_-(t),t)+u_1''(x_-(t))\}\ {\rm d}t, \nonumber
\end{align*}
from which we obtain
\begin{align}\label{s1.22}
 \bigg|\fr{\pa^2 R}{\pa\eta\pa \xi}(\eta,\xi)\bigg|+\bigg|\fr{\pa^2 S}{\pa\eta\pa \xi}(\eta,\xi)\bigg| +\bigg|\fr{\pa^2 W}{\pa\eta\pa \xi}(\eta,\xi)\bigg|\leq 3\tilde{M}\xi.
\end{align}

Finally, we show $|R_{tt}|+|S_{tt}|+|W_{tt}|\leq 13\tilde{M}$. We differentiate system \eqref{s1.18} with respect to $\xi$ to achieve
\begin{align}
\fr{\pa^2 R}{\pa \xi^2}(\eta,\xi)&=\fr{r_\xi-s_\xi}{\xi}-\fr{r-s}{\xi^2}-4\xi(2u_1'(\eta)+3u_0''(\eta)\xi) \nonumber \\[4pt]
&\  -4\xi \displaystyle\int_{0}^\xi\bigg(\fr{r_x(x_+(t),t)-s_x(x_+(t),t)}{2t}-2t^2(u_1''(x_+(t)) +u_0'''(x_+(t))t)\bigg)\ {\rm d}t \nonumber \\[4pt]
&\  +4\xi^4\displaystyle\int_{0}^\xi\bigg(\fr{r_{xx}(x_+(t),t)-s_{xx}(x_+(t),t)}{2t}-2t^2(u_1'''(x_+(t)) +u_0''''(x_+(t))t)\bigg)\ {\rm d}t, \nonumber
\end{align}
\begin{align*}
\fr{\pa^2 S}{\pa \xi^2}(\eta,\xi)&=\fr{s_\xi-r_\xi}{\xi} -\fr{s-r}{\xi^2} +4\xi(2u_1'-3u_0''\xi)  \\[4pt]
&\  +4\xi \displaystyle\int_{0}^\xi\bigg(\fr{s_x(x_-(t),t)-r_x(x_-(t),t)}{2t}+2t^2(u_1''(x_-(t))-u_0'''(x_-(t))t)\bigg)\ {\rm d}t \\[4pt]
&\ +4\xi^4 \displaystyle\int_{0}^\xi\bigg(\fr{s_{xx}(x_-(t),t)-r_{xx}(x_-(t),t)}{2t}+2t^2(u_1'''(x_-(t))-u_0''''(x_-(t))t)\bigg)\ {\rm d}t
\end{align*}
\begin{align*}
\fr{\pa^2 W}{\pa \xi^2}(\eta,\xi)=&-4\{r+u_1+\xi r_\xi\} -8\xi\displaystyle\int_{0}^\xi t\{r_x(x_-(t),t)+u_1'(x_-(t))\}\ {\rm d}t \\[4pt]
&-8\xi^4\int_{0}^\xi t\{r_{xx}(x_-(t),t)+u_1''(x_-(t))\}\ {\rm d}t
\end{align*}
which together with \eqref{s1.15} and \eqref{s1.21} gives
\begin{align*}
\bigg|\fr{\pa^2 R}{\pa \xi^2}(\eta,\xi)\bigg|&\leq 3\tilde{M}+\tilde{M}+4\xi K+4\xi(1+\xi^3)\int_{0}^\xi\bigg(\fr{\tilde{M}}{2}t+2Kt^2\bigg)\ {\rm d}t\leq 5\tilde{M}, \\[4pt]
\bigg|\fr{\pa^2 S}{\pa \xi^2}(\eta,\xi)\bigg|&\leq 3\tilde{M}+\tilde{M}+4\xi K+4\xi(1+\xi^3)\int_{0}^\xi\bigg(\fr{\tilde{M}}{2}t+2Kt^2\bigg)\ {\rm d}t\leq 5\tilde{M}, \\[4pt]
\bigg|\fr{\pa^2 S}{\pa \xi^2}(\eta,\xi)\bigg|&\leq 4(\tilde{M}\xi^2+K+\tilde{M}\xi^3)+8\xi(1+\xi^3)\int_{0}^\xi t(\tilde{M}t^2+K)\ {\rm d}t\leq3\tilde{M}.
\end{align*}
Thus we have
\begin{align}\label{s1.23}
\bigg|\fr{\pa^2 R}{\pa \xi^2}(\eta,\xi)\bigg|+\bigg|\fr{\pa^2 S}{\pa \xi^2}(\eta,\xi)\bigg| +\bigg|\fr{\pa^2 W}{\pa \xi^2}(\eta,\xi)\bigg|\leq 13\tilde{M}.
\end{align}
Combining \eqref{s1.21}-\eqref{s1.23} ends the proof of the lemma.
\end{proof}
Hence, the proof of Theorem \ref{thmT1} is completed.

\section{The derivation of formulations}

\subsection{The derivation of \eqref{2.7}} \label{App1}

We only derive the third equation of \eqref{2.7} since the derivation of the fourth equation is parallel. The first two equations can be derived obviously. Putting the expression of $(u,v)$ in \eqref{2.4} into the third equation of \eqref{2.2} and using the notation of $\bar{\pa}^+$ in \eqref{2.5} yields
\begin{align}\label{A1}
-\fr{\sin\theta}{\cos\omega}\bar{\pa}^+\bigg(\fr{\cos\theta}{\sin\omega}\bigg)+ \fr{\cos\theta}{\cos\omega}\bar{\pa}^+\bigg(\fr{\sin\theta}{\sin\omega}\bigg)+\fr{1}{\gamma p}\bar{\pa}^+p=0.
\end{align}
On the other hand, we have by the Bernoulli function
\begin{align*}
B=\fr{u^2+v^2}{2}+\fr{c^2}{\gamma-1}&=\bigg(\fr{1}{2\sin^2\omega}+\fr{1}{\gamma-1}\bigg)c^2 \\[4pt]
&=\fr{\gamma(\kappa+\sin^2\omega)}{2\kappa\sin^2\omega}\cdot\fr{p}{\rho},
\end{align*}
which, together with the entropy function $S=p\rho^{-\gamma}$, gives
\begin{align*}
\ln p=\fr{\gamma}{\gamma-1}\bigg(\ln B-\fr{1}{\gamma}\ln S-\ln\fr{\gamma(\kappa+\sin^2\omega)}{2\kappa\sin^2\omega}\bigg),
\end{align*}
and further provides
\begin{align*}
\fr{1}{p}\bar{\pa}^+p=\fr{\gamma}{2\kappa}\bar{\pa}^+(\ln B-\fr{1}{\gamma}\ln S)+\fr{\gamma\cot\omega}{\kappa+\sin^2\omega}\bar{\pa}^+\omega.
\end{align*}
Inserting the above into \eqref{A1} leads to
\begin{align*}
&-\fr{\sin\theta}{\cos\omega} \cdot\fr{-\sin\theta\sin\omega\bar{\pa}^+\theta-\cos\theta\cos\omega\bar{\pa}^+\omega}{\sin^2\omega} +\fr{\cos\theta}{\cos\omega} \cdot\fr{\cos\theta\sin\omega\bar{\pa}^+\theta-\sin\theta\cos\omega\bar{\pa}^+\omega}{\sin^2\omega} \\[4pt] &\qquad +\fr{1}{2\kappa}\bar{\pa}^+\bigg(\ln B-\fr{1}{\gamma}\ln S\bigg)+\fr{\cot\omega}{\kappa+\sin^2\omega}\bar{\pa}^+\omega=0.
\end{align*}
Doing a simple rearrangement arrives at
$$
\bar{\pa}^+\theta+\fr{\cos^2\omega}{\kappa+\sin^2\omega}\bar{\pa}^+\omega +\fr{\sin(2\omega)}{4\kappa}\bar{\pa}^+\bigg(\ln B-\fr{1}{\gamma}\ln S\bigg)=0,
$$
which is the third equation of \eqref{2.7}.

\subsection{The commutator relation between $\bar{\pa}^0$ and $\bar{\pa}^+$}\label{App2}

By a direct calculation, we obtain from \eqref{2.5},
\begin{align}\label{B1}
\bar{\pa}^0\bar{\pa}^+&=(\cos\theta\pa_x+\sin\theta\pa_y)(\cos\alpha\pa_x+\sin\alpha\pa_y) \nonumber \\ &=\cos\theta\cos\alpha\pa_{xx}+(\cos\theta\sin\alpha+\sin\theta\cos\alpha)\pa_{xy} +\sin\theta\sin\alpha\pa_{yy} \nonumber \\
&\quad -\sin\alpha(\cos\theta\alpha_x+\sin\theta\alpha_y)\pa_x +\cos\alpha(\cos\theta\alpha_x+\sin\theta\alpha_y)\pa_y
\nonumber \\ &= \cos\theta\cos\alpha\pa_{xx}+(\cos\theta\sin\alpha+\sin\theta\cos\alpha)\pa_{xy} +\sin\theta\sin\alpha\pa_{yy} \nonumber \\
&\quad -\sin\alpha\bar{\pa}^0\alpha\pa_x +\cos\alpha\bar{\pa}^0\alpha\pa_y,
\end{align}
and
\begin{align}\label{B2}
\bar{\pa}^+\bar{\pa}^0&=(\cos\alpha\pa_x+\sin\alpha\pa_y)(\cos\theta\pa_x+\sin\theta\pa_y) \nonumber \\ &=\cos\theta\cos\alpha\pa_{xx}+(\cos\theta\sin\alpha+\sin\theta\cos\alpha)\pa_{xy} +\sin\theta\sin\alpha\pa_{yy} \nonumber \\
&\quad -\sin\theta(\cos\alpha\theta_x+\sin\alpha\theta_y)\pa_x +\cos\theta(\cos\alpha\theta_x+\sin\alpha\theta_y)\pa_y
\nonumber \\ &= \cos\theta\cos\alpha\pa_{xx}+(\cos\theta\sin\alpha+\sin\theta\cos\alpha)\pa_{xy} +\sin\theta\sin\alpha\pa_{yy} \nonumber \\
&\quad -\sin\theta\bar{\pa}^+\theta\pa_x +\cos\theta\bar{\pa}^+\theta\pa_y.
\end{align}
Subtracting \eqref{B2} from \eqref{B1} yields
\begin{align}\label{B3}
\bar{\pa}^0\bar{\pa}^+-\bar{\pa}^+\bar{\pa}^0 =(\sin\theta\bar{\pa}^+\theta-\sin\alpha\bar{\pa}^0\alpha)\pa_x -(\cos\theta\bar{\pa}^+\theta-\cos\alpha\bar{\pa}^0\alpha)\pa_y.
\end{align}
On the other hand, we find from \eqref{2.6} and \eqref{2.4},
\begin{align*}
\pa_x&=\fr{2\sin\alpha\cos\omega\bar{\pa}^0-(\sin\alpha+\sin\beta)\bar{\pa}^+}{\sin(2\omega)} =\fr{\sin\alpha\bar{\pa}^0-\sin\theta\bar{\pa}^+}{\sin\omega}, \\[4pt] \pa_y&=-\fr{2\cos\alpha\cos\omega\bar{\pa}^0-(\cos\alpha+\cos\beta)\bar{\pa}^+}{\sin(2\omega)} =-\fr{\cos\alpha\bar{\pa}^0-\cos\theta\bar{\pa}^+}{\sin\omega}.
\end{align*}
Inserting the above into \eqref{B3}, one has
\begin{align*}
&\bar{\pa}^0\bar{\pa}^+-\bar{\pa}^+\bar{\pa}^0 \\[4pt]
=&\fr{(\sin\alpha\sin\theta+\cos\alpha\cos\theta)\bar{\pa}^+\theta -\bar{\pa}^0\alpha}{\sin\omega}\bar{\pa}^0 -\fr{\bar{\pa}^+\theta-(\sin\theta\sin\alpha+\cos\theta\cos\alpha)\bar{\pa}^0\alpha}{\sin\omega}\bar{\pa}^+ \\[4pt]
=& \fr{\cos\omega\bar{\pa}^+\theta -\bar{\pa}^0\alpha}{\sin\omega}\bar{\pa}^0 -\fr{\bar{\pa}^+\theta-\cos\omega\bar{\pa}^0\alpha}{\sin\omega}\bar{\pa}^+,
\end{align*}
which leads to the desired result.

\subsection{The characteristic decomposition for $\Xi$}\label{App3}

We only derive the first one of \eqref{2.16}, and the other can be obtained with the same technique. Firstly, we have the following commutator relation between $\bar{\pa}^-$ and $\bar{\pa}^+$
\begin{align*}
\bar{\pa}^-\bar{\pa}^+-\bar{\pa}^+\bar{\pa}^- =\fr{\cos(2\omega)\bar{\pa}^+\beta-\bar{\pa}^-\alpha}{\sin(2\omega)}\bar{\pa}^- -\fr{\bar{\pa}^+\beta-\cos(2\omega)\bar{\pa}^-\alpha}{\sin(2\omega)}\bar{\pa}^+,
\end{align*}
which can be derived in a similar way as in Appendix B or see Li and Zheng \cite{Li-Zheng1}. Thus it presents from \eqref{2.15}  that
\begin{align}\label{C1}
\bar{\pa}^-\bar{\pa}^+\theta-\bar{\pa}^+\bar{\pa}^-\theta &=\fr{\cos(2\omega)\bar{\pa}^+\beta-\bar{\pa}^-\alpha}{\sin(2\omega)}\bar{\pa}^-\theta -\fr{\bar{\pa}^+\beta-\cos(2\omega)\bar{\pa}^-\alpha}{\sin(2\omega)}\bar{\pa}^+\theta \nonumber \\[4pt]
&=[\cos(2\omega)\bar{\pa}^+\beta-\bar{\pa}^-\alpha]\bar{\pa}^-\Xi +[\bar{\pa}^+\beta-\cos(2\omega)\bar{\pa}^-\alpha]\bar{\pa}^+\Xi,
\end{align}
and
\begin{align}\label{C2}
\bar{\pa}^+\bar{\pa}^-\Xi =\bar{\pa}^-\bar{\pa}^+\Xi-\fr{\cos(2\omega)\bar{\pa}^+\beta-\bar{\pa}^-\alpha}{\sin(2\omega)}\bar{\pa}^-\Xi +\fr{\bar{\pa}^+\beta-\cos(2\omega)\bar{\pa}^-\alpha}{\sin(2\omega)}\bar{\pa}^+\Xi.
\end{align}
On the other hand, we find by using \eqref{2.15} again that
\begin{align*}
\bar{\pa}^-\bar{\pa}^+\theta&=-\sin(2\omega)\bar{\pa}^-\bar{\pa}^+\Xi-2\cos(2\omega)\bar{\pa}^-\omega\bar{\pa}^+\Xi, \\
\bar{\pa}^+\bar{\pa}^-\theta&=\sin(2\omega)\bar{\pa}^+\bar{\pa}^-\Xi+2\cos(2\omega)\bar{\pa}^+\omega\bar{\pa}^-\Xi.
\end{align*}
Substituting the above into \eqref{C1} and rearranging the result achieve
\begin{align}\label{C3}
-\sin(2\omega)(\bar{\pa}^-\bar{\pa}^+\Xi+\bar{\pa}^+\bar{\pa}^-\Xi)=&\bigg(2\cos(2\omega)\bar{\pa}^+\omega +\cos(2\omega)\bar{\pa}^+\beta-\bar{\pa}^-\alpha\bigg)\bar{\pa}^-\Xi \nonumber \\
&+\bigg(2\cos(2\omega)\bar{\pa}^-\omega+\bar{\pa}^+\beta -\cos(2\omega)\bar{\pa}^-\alpha\bigg)\bar{\pa}^+\Xi.
\end{align}
We put \eqref{C2} into \eqref{C3} to obtain
\begin{align}\label{C4}
\bar{\pa}^-\bar{\pa}^+\Xi&=-\fr{\cos(2\omega)\bar{\pa}^+\omega}{\sin(2\omega)}\bar{\pa}^-\Xi -\fr{\cos(2\omega)\bar{\pa}^-\omega+\bar{\pa}^+\beta -\cos(2\omega)\bar{\pa}^-\alpha}{\sin(2\omega)}\bar{\pa}^+\Xi \nonumber \\[4pt]
&=-\fr{\cos(2\omega)\bar{\pa}^+\omega}{\sin(2\omega)}\bar{\pa}^-\Xi -\fr{\bar{\pa}^+\theta-\bar{\pa}^+\omega-\cos(2\omega)\bar{\pa}^-\theta}{\sin(2\omega)}\bar{\pa}^+\Xi
\nonumber \\[4pt]
&=-\fr{\cos(2\omega)\bar{\pa}^+\omega}{\sin(2\omega)}\bar{\pa}^-\Xi +\fr{\sin(2\omega)\bar{\pa}^+\Xi +\bar{\pa}^+\omega+\sin(2\omega)\cos(2\omega)\bar{\pa}^-\Xi}{\sin(2\omega)}\bar{\pa}^+\Xi.
\end{align}
Moreover, thanks to \eqref{2.12} and \eqref{2.15}, one arrives at
\begin{align*}
\bar{\pa}^+\omega=\fr{2\sin\omega(\kappa+\sin^2\omega)}{\cos\omega}(\bar{\pa}^+\Xi+GH).
\end{align*}
It follows by inserting the above into \eqref{C4} that
\begin{align*}
&\bar{\pa}^-\bar{\pa}^+\Xi=-\fr{\cos(2\omega)(\kappa+\sin^2\omega)}{\cos^2\omega}(\bar{\pa}^+\Xi+GH)\bar{\pa}^-\Xi \\[4pt] &\qquad \qquad \qquad  +\fr{\kappa+\sin^2\omega}{\cos^2\omega}(\bar{\pa}^+\Xi+GH)\bar{\pa}^+\Xi +[\bar{\pa}^+\Xi+\cos(2\omega)\bar{\pa}^-\Xi]\bar{\pa}^+\Xi  \\[4pt]
=&\fr{\kappa+\sin^2\omega}{\cos^2\omega}(\bar{\pa}^+\Xi+GH)[\bar{\pa}^+\Xi-\cos(2\omega)\bar{\pa}^-\Xi] +[\bar{\pa}^+\Xi+\cos(2\omega)\bar{\pa}^-\Xi]\bar{\pa}^+\Xi \\[4pt]
=&\fr{\kappa\bar{\pa}^+\Xi+(\kappa+\sin^2\omega)GH}{\cos^2\omega}[\bar{\pa}^+\Xi-\cos(2\omega)\bar{\pa}^-\Xi] +\fr{\bar{\pa}^+\Xi}{\cos^2\omega}[\bar{\pa}^+\Xi+\cos^2(2\omega)\bar{\pa}^-\Xi],
\end{align*}
which is the first equation of \eqref{2.16}.


\end{document}